\documentclass[12pt,a4paper,reqno]{amsart}
\usepackage{amssymb}
\usepackage{amscd}
\usepackage{enumerate}
\usepackage{graphicx}
\usepackage{siunitx}
\usepackage{tikz-cd}
\usepackage{bm}
\numberwithin{equation}{section}

\usepackage{mathtools}
\usepackage[tableposition=top]{caption}
\usepackage{booktabs,dcolumn}

%
     
\theoremstyle{plain}

\newtheorem{theorem}{Theorem}[section]
\newtheorem{proposition}[theorem]{Proposition}
\newtheorem{lemma}[theorem]{Lemma}
\newtheorem{corollary}[theorem]{Corollary}

\theoremstyle{definition}

\newtheorem{definition}[theorem]{Definition}

\newtheorem{example}[theorem]{Example}

\renewcommand\P{\mathbb{P}}
\newcommand\E{\mathbb{E}}
\newcommand\R{\mathbb{R}}
\newcommand\Z{\mathbb{Z}}
\newcommand\N{\mathbb{N}}
\newcommand\A{\mathbb{A}}
\newcommand\T{\mathbb{T}}
\newcommand\Q{\mathbb{Q}}
\newcommand\C{\mathbb{C}}
\newcommand\G{\mathbb{G}}
\newcommand\HH{\mathbb{H}}
\newcommand\F{\mathcal{F}}

\newcommand\Sample{\mathcal{S}}
\newcommand\Schwartz{\mathbf{S}}

\newcommand\Op{{\operatorname{T}}}
\newcommand\Sun{{\operatorname{Sun}}}
\newcommand\Log{{\operatorname{Log}}}
\newcommand\eps{\varepsilon}

\renewcommand{\mod}{\bmod}

\parindent 0mm
\parskip   5mm

\begin{document}

\title[Ionescu--Wainger and the adeles]{The Ionescu--Wainger multiplier theorem and the adeles}

\author{Terence Tao}
\address{UCLA Department of Mathematics, Los Angeles, CA 90095-1555.}
\email{tao@math.ucla.edu}


\subjclass[2010]{42B15}

\begin{abstract}  The Ionescu--Wainger multiplier theorem establishes good $L^p$ bounds for Fourier multiplier operators localized to major arcs; it has become an indispensible tool in discrete harmonic analysis.  We give a simplified proof of this theorem with more explicit constants (removing logarithmic losses that were present in previous versions of the theorem), and give a more general variant involving adelic Fourier multipliers.  We also establish a closely related adelic sampling theorem that shows that $\ell^p(\Z^d)$ norms of functions with Fourier transform supported on major arcs are comparable to the $L^p(\A_\Z^d)$ norm of their adelic counterparts.
\end{abstract}

\maketitle


\section{Introduction}

This paper will be concerned with the $L^p$ theory of Fourier multiplier operators on various locally compact abelian groups, such as $\Z^d$, $\R^d$, and $\A_\Z^d$.  In order to treat these groups in a unified fashion we adopt the following abstract harmonic analysis notation.

\begin{definition}[Pontryagin duality]  An \emph{LCA group} is a locally compact abelian group $\G = (\G,+)$ equipped with a Haar measure $\mu_{\G}$.  A \emph{Pontryagin dual} of an LCA group $\G$ is an LCA group $\G^* = (\G^*,+)$ with a Haar measure $\mu_{\G^*}$ and a continuous bihomomorphism $(x,\xi) \mapsto x \cdot \xi$ (which we call a \emph{pairing}) from $\G \times \G^*$ to the unit circle $\T = \R/\Z$, such that the Fourier transform $\F_{\G} \colon L^1(\G) \to C(\G^*)$ defined by
$$ \F_{\G} f(\xi) \coloneqq \int_{\G} f(x) e(x \cdot \xi)\ d\mu_{\G}(x),$$
where $e \colon \T \to \C$ is the standard character $e(\theta) \coloneqq e^{2\pi i \theta}$, extends to a unitary map from $L^2(\G)$ to $L^2(\G^*)$; in particular we have the Plancherel identity
$$ \int_\G |f(x)|^2\ d\mu_\G(x) = \int_{\G^*} |\F_\G f(\xi)|^2\ d\mu_{\G^*}(\xi)$$
for all $f \in L^2(\G)$, as well as the inversion formula
$$ \F_{\G}^{-1} F(x) = \int_{\G^*} F(\xi) e(-x \cdot \xi)\ d\mu_{\G^*}(\xi)$$
for all $F \in L^1(\G^*) \cap L^2(\G^*)$.

If $\Omega \subset \G^*$ is measurable, we say that $f \in L^2(\G)$ is \emph{Fourier supported} in $\Omega$ if $\F_{\G} f$ vanishes outside of $\Omega$ (modulo null sets).  The space of such functions will be denoted $L^2(\G)^\Omega$.

If $m \in L^\infty(\G^*)$, we define the associated Fourier multiplier operator $\Op_m \colon L^2(\G) \to L^2(\G)$ by the formula
$$ \F_\G \Op_m f \coloneqq m \F_\G f $$
for all $f \in L^2(\G)$, thus $\Op_m = \F_G^{-1} m \F_G$. We refer to $m$ as the \emph{symbol} of $\Op_m$.

For any finite-dimensional normed vector space $V$, we extend $\Op_m$ to an operator on $L^2(\G;V)$ in the obvious fashion.
\end{definition}

We will focus in particular on the Pontryagin dual pairs
$$ (\G, \G^*) = (\Z^d, \T^d), (\R^d, \R^d), (\A_\Z^d, \R^d \times (\Q/\Z)^d)$$
where $\A_\Z = \R \times \hat \Z$ denotes the adelic integers and $d \geq 1$ is an integer; see Appendix \ref{adele-sec} for a more precise description of these pairs.  To avoid technicalities we shall largely restrict attention to smooth symbols $m$, although rougher symbols can also be treated by applying suitable limiting arguments, as our estimates will not depend on any smooth norms of $m$.  We view the adelic space $\A_\Z^d = \R^d \times \hat \Z^d$ as a simplified model of the lattice $\Z^d$ that captures both the ``continuous'' aspects of this lattice (via the factor $\R^d$) and the ``arithmetic'' aspects of this lattice (via the factor $\hat \Z^d$).  The reader may wish to restrict attention to the one-dimensional case $d=1$ as it already captures all of the key ideas, but the extension to higher values of $d$ requires only minor notational changes and is also useful in some applications (e.g., \cite{MSZ3}), so we work with general $d$ in this paper.

A central problem in harmonic analysis is to understand the operator norm $\| \Op_m \|_{B(L^p(\G))}$ of a Fourier multiplier operator $\Op_m$ on a Lebesgue space $L^p(\G)$ (restricting $\Op_m$ initially to some dense subclass such as the Schwartz-Bruhat space $\Schwartz(\G)$ to avoid technicalities).  For $p=2$ this norm is just the $L^\infty(\G^*)$ norm of $m$, but for other choices of $p$ the situation is considerably more complicated. Our initial focus here will be on understanding this problem in the case where $\G = \Z^d$ and $m$ is supported on ``major arcs''.

If $m \in C^\infty_c(\R^d)$ is a smooth symbol, then $\Op_m$ is a Fourier multiplier operator on $L^2(\R^d)$, but we can also define associated Fourier multiplier operators $\Op_{m;\alpha}$ on $\ell^2(\Z^d)$ for various shifts $\alpha \in \T^d$ by the formula
$$ \Op_{m,\alpha} \coloneqq \Op_{m_\alpha}$$
where $m_\alpha \in C^\infty(\T^d)$ is the symbol
$$ m_\alpha(\xi) \coloneqq \sum_{\theta \in \R^d: \xi = \alpha + \theta \mod \Z^d} m(\theta).$$
Equivalently, one has
$$ \Op_{m;\alpha} f(n) = \int_{\R^d} m(\theta) e( -n \cdot (\alpha + \theta) ) \F_{\Z^d} f(\alpha + \theta)\ d\theta.$$
More generally, for any finite set $\Sigma \subset \R^d$, define
\begin{equation}\label{msig}
 \Op_{m;\Sigma} \coloneqq \sum_{\alpha \in \Sigma} \Op_{m;\alpha},
\end{equation}
thus
$$ \Op_{m;\Sigma} f(n) = \sum_{\alpha \in \Sigma} \int_{\R^d} m(\theta) e( -n \cdot (\alpha + \theta) ) \F_{\Z^d} f(\alpha + \theta)\ d\theta.$$

If the support of $m$ is suitably restricted, then the $\ell^p(\R^d)$ multiplier theory of $\Op_{m;\alpha}$ or $\Op_{m;\Sigma}$ is closely tied to the $L^p(\Z^d)$ multiplier theory of $\Op_m$.  One basic manifestation of this is via the following sampling principle of Maygar, Stein, and Wainger \cite{msw}.  For any $\xi_0 \in \R^d$, let $\xi_0 + [-r,r]^d$ denote the closed cube of sidelength $2r$ centred at $\xi_0$.  We also define analogous balls (or cubes or ``arcs'') $\alpha + [-r,r]^d \subset \T^d$ for $\alpha \in \T^d$. For any positive integer $Q$, let
$$ \T^d[Q] \coloneqq \{ x \in \T^d: Qx = 0 \} = \left( \frac{1}{Q}\Z/\Z\right)^d$$
denote the collection of $Q$-torsion points of the torus $\T^d$; this is a finite subgroup of $\T^d$.  

\begin{proposition}[Maygar--Stein--Wainger sampling principle]\label{msw}  Let $d \geq 1$ be an integer, let $1 \leq p \leq \infty$, and let $V$ be a finite-dimensional Banach space.
\begin{itemize}
\item[(i)]  If $m \in C^\infty_c(\R^d)$ is supported in $[-\frac{1}{2},\frac{1}{2}]^d$, then 
\begin{equation}\label{opm}
\| \Op_{m;0} \|_{B(\ell^p(\Z^d;V))} \leq O(1)^d \| \Op_{m} \|_{B(L^p(\R^d;V))}.
\end{equation}
(See Section \ref{notation-sec} for our conventions on asymptotic notation, as well as our notation $B(W)$ for the operator norm on a normed vector space $W$.)
\item[(ii)]  More generally, if $Q \geq 1$ is an integer, and $m \in C^\infty_c(\R^d)$ is supported in $[-\frac{1}{2Q},\frac{1}{2Q}]^d$, then
$$ \| \Op_{m;\T^d[Q]} \|_{B(\ell^p(\Z^d;V))} \leq O(1)^d \| \Op_{m} \|_{B(L^p(\R^d;V))}.$$
\end{itemize}
\end{proposition}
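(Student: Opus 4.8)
The plan is to follow the ``lift and sample'' scheme from \cite{msw}, paying attention to the fact that every constant that arises can be taken of the form $O(1)^d$. In both parts it suffices to prove the stated bound for finitely supported $f\colon\Z^d\to V$ and then extend to all of $\ell^p$ by density, with $p=\infty$ handled directly (or by duality with $p=1$). For (i), I would fix once and for all a Schwartz function $\psi\in\Schwartz(\R^d)$ of tensor-product form $\psi(x)=\psi_1(x_1)\cdots\psi_1(x_d)$ whose Fourier transform $\F_{\R^d}\psi$ equals $1$ on $[-\frac{1}{2},\frac{1}{2}]^d$ and is supported in $[-\frac{3}{4},\frac{3}{4}]^d$, noting that $\|\psi\|_{L^1(\R^d)}$ and $\sup_{x\in\R^d}\sum_{n\in\Z^d}|\psi(x-n)|$ are then both $O(1)^d$, being $d$-fold products of one-dimensional quantities. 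Given $f$, I would lift it to $F\coloneqq\sum_{n\in\Z^d}f(n)\psi(\cdot-n)\in\Schwartz(\R^d;V)$, for which a direct computation gives $\F_{\R^d}F=(\F_{\Z^d}f)\cdot\F_{\R^d}\psi$ (viewing $\F_{\Z^d}f$ as a $\Z^d$-periodic function on $\R^d$), and Hölder's inequality in $n$ followed by Fubini yields $\|F\|_{L^p(\R^d;V)}\leq O(1)^d\|f\|_{\ell^p(\Z^d;V)}$. Since $\F_{\R^d}\psi\equiv1$ on $\operatorname{supp}m\subseteq[-\frac{1}{2},\frac{1}{2}]^d$, we have $m\cdot\F_{\R^d}F=m\cdot\F_{\Z^d}f$, so substituting into the inversion formula for $\Op_mF$ and restricting to lattice points shows $(\Op_mF)(n)=\Op_{m;0}f(n)$ for all $n\in\Z^d$.

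It then remains to establish a sampling inequality: for $G\in\Schwartz(\R^d;V)$ with $\F_{\R^d}G$ supported in $[-\frac{1}{2},\frac{1}{2}]^d$, one has $\|G|_{\Z^d}\|_{\ell^p(\Z^d;V)}\leq O(1)^d\|G\|_{L^p(\R^d;V)}$. I would prove this by choosing a tensor-product Schwartz function $\Phi$ with $\F_{\R^d}\Phi\equiv1$ on $[-\frac{1}{2},\frac{1}{2}]^d$ and supported in $[-\frac{3}{4},\frac{3}{4}]^d$, so that $G=G*\Phi$ and hence $G(n)=\int_{\R^d}G(y)\Phi(n-y)\,dy$; the same Hölder--Fubini argument as above, now with the roles of the sum over $\Z^d$ and the integral over $\R^d$ interchanged, gives the claim with constant again $O(1)^d$ since $\|\Phi\|_{L^1(\R^d)}$ and $\sup_{y\in\R^d}\sum_{n\in\Z^d}|\Phi(n-y)|$ factor through one dimension. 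Applying this with $G=\Op_mF$ and chaining with $\|\Op_mF\|_{L^p}\leq\|\Op_m\|_{B(L^p(\R^d;V))}\|F\|_{L^p}$ and the lifting bound then completes (i).

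Part (ii) I would reduce to (i) by scaling. Because Fourier multiplier norms on $L^p(\R^d;V)$ are invariant under dilations of the frequency variable, the symbol $\tilde m(\theta)\coloneqq m(\theta/Q)$, which is supported in $[-\frac{1}{2},\frac{1}{2}]^d$, satisfies $\|\Op_{\tilde m}\|_{B(L^p(\R^d;V))}=\|\Op_m\|_{B(L^p(\R^d;V))}$, so (i) applies to it. Then, using the elementary identity $\sum_{a\in\{0,\dots,Q-1\}^d}e((k-n)\cdot a/Q)=Q^d\,\mathbf{1}_{k\equiv n\bmod Q}$, a short computation from the definitions shows that $\Op_{m;\T^d[Q]}f(n)$ depends only on the restriction of $f$ to the residue class $n+Q\Z^d$, and that, writing $n=r+Qi$ with $r\in\{0,\dots,Q-1\}^d$ and $g_r(j)\coloneqq f(r+Qj)$, one has the exact identity $\Op_{m;\T^d[Q]}f(r+Qi)=\Op_{\tilde m;0}g_r(i)$ for all $i\in\Z^d$. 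Raising to the $p$-th power, summing over $i\in\Z^d$ and over the $Q^d$ residue classes $r$ (taking suprema when $p=\infty$), applying (i) to each $g_r$, and using $\sum_{r,j}|f(r+Qj)|^p=\|f\|_{\ell^p(\Z^d;V)}^p$, one obtains (ii) with the same $O(1)^d$ constant, uniformly in $Q$.

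The step I expect to be the main obstacle is the sampling inequality in the second paragraph: band-limitation is genuinely essential there (the estimate fails for general $G\in L^p$), so one must produce a reproducing kernel $\Phi$ that is identically $1$ on the \emph{entire} closed cube $[-\frac{1}{2},\frac{1}{2}]^d$ of sidelength $1$ and arrange it to be of tensor-product type, so that the resulting constant stays $O(1)^d$ uniformly in $p$. The remaining ingredients --- the lifting bound, the pointwise identity $(\Op_mF)|_{\Z^d}=\Op_{m;0}f$, and the residue-class reduction in (ii) --- are routine once the notation is set up.
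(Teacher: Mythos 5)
Your proposal is correct, and it is essentially the argument from Magyar--Stein--Wainger \cite{msw} that the paper simply cites (with the observation that all constants are at most exponential in $d$). The lift-via-convolution-with-a-tensor-product-reproducing-kernel, the pointwise identity $(\Op_m F)|_{\Z^d}=\Op_{m;0}f$, the band-limited sampling inequality, and the residue-class reduction of (ii) to (i) via the dilated symbol $\tilde m(\theta)=m(\theta/Q)$ all match the standard proof; the one point you correctly flag --- that $\hat\psi$ and $\hat\Phi$ must equal $1$ on the whole closed cube and be of tensor-product type so the $L^1$ norm and periodized sup are $d$-fold products of fixed one-dimensional quantities --- is exactly what makes the constant $O(1)^d$ rather than merely $O_d(1)$.
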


\begin{proof} Part (i) is \cite[Proposition 2.1]{msw} and part (ii) is \cite[Corollary 2.1]{msw}, after noting that all implied constants in the proof are at most exponential in the dimension $d$.
\end{proof}

In the remarks after \cite[Proposition 2.1]{msw} the question is posed as to whether the $O(1)^d$ constant in \eqref{opm} can be made independent of $d$, or even replaced with $1$.  This is easily seen to be true for $p=2$ from Plancherel's theorem, but for $p$ sufficiently close to $1$ or $\infty$ the answer is negative \cite{K}.  The question for other values of $p$ remains open.

Proposition \ref{msw} lets us control certain Fourier multiplier operators whose symbol is supported in sets of the form
$$ \T^d[Q] + [-\eps,\eps]^d$$
for $\eps>0$ small enough (in particular, the above proposition applies when $\eps \leq 1/2Q$).  For applications to discrete harmonic analysis (particularly involving averaging over ``arithmetic'' sets such as polynomial sequences or primes), it would be desirable to have a similar estimate that could handle symbols supported on the ``classical major arcs''
\begin{equation}\label{major-class}
 \bigcup_{q=1}^N \T^d[q] + [-\eps,\eps]^d
\end{equation}
for some $N \geq 1$ and $\eps>0$.  As it turns out, these classical arcs are inconvenient to work with directly for the purposes of $\ell^p$ multiplier theory.  However, in the remarkable work of Ionescu and Wainger \cite{IW}, a more complicated major arc set 
$$ {\mathcal M} = \Sigma_{\leq k} + [-\eps,\eps]^d$$
was introduced which (for suitable choices of parameters) contained the classical major arc set \eqref{major-class} while simultaneously enjoying a satisfactory $\ell^p$ multiplier theory for relatively large values of $\eps$.  The Ionescu--Wainger multiplier theorem has since been indispensable in many results in discrete harmonic analysis, in particular providing an analogue of Littlewood-Paley theory adapted to major arcs; see e.g., \cite{pierce0}, \cite{ISMW}, \cite{MST0}, \cite{MST}, \cite{MSZ3}, \cite{KMT}.

In this note we give a general version of the Ionescu--Wainger theorem which avoids some logarithmic loss factors present in earlier treatments, and quantifies the dependence on various parameters.  To describe the result we need some notation.

\begin{definition}[Generalized Ionescu--Wainger major arcs]\label{major-arc}  A \emph{major arc parameter set} is a quadruplet $(d,k,S,\eps)$ where $d,k \geq 1$ are integers, 
$S$ is a finite collection of pairwise coprime integers, and $\eps > 0$ is a real number.
For any $A \subseteq S$, we let $Q_A$ denote the positive integer $Q_A \coloneqq \prod_{q \in A} q$ (with the usual convention $Q_\emptyset = 1$), and let $\Sigma_{\subseteq A} \subseteq \T^d$ denote the subgroup 
$$\Sigma_{\subseteq A} \coloneqq \T^d[Q_A].$$
We let $\Sigma_A$ denote the set
$$ \Sigma_A \coloneqq \Sigma_{\subseteq A} \backslash \bigcup_{B \subsetneq A} \Sigma_{\subseteq B}.$$
Thus for instance $\Sigma_\emptyset = \Sigma_{\subseteq \emptyset} = \T^d[1] = \{0\}$.
We define the sets
$$ \binom{S}{k} \coloneqq \{ A \subseteq S: |A| = k \}; \quad \binom{S}{\leq k} \coloneqq \{ A \subseteq S: 0 \leq |A| \leq k \}$$
and let $\Sigma_{\leq k}$ denote the set
$$ \Sigma_{\leq k} \coloneqq \bigcup_{A \in \binom{S}{\leq k}} \Sigma_{\subseteq A} = \bigcup_{A \in \binom{S}{\leq k}} \Sigma_{A}.$$
The \emph{major arc set} ${\mathcal M} = {\mathcal M}_{(d,k,S,\eps)}$ associated to the parameter set $(d,k,S,\eps)$ is defined as
$$ {\mathcal M} \coloneqq \Sigma_{\leq k} + [-\eps,\eps]^d.$$

A major arc parameter set is said to be \emph{$(r,c)$-good} for some integer $r \geq 1$ and $0 < c < 1$ if one has the smallness condition
\begin{equation}\label{small}
 \eps < \frac{c}{2r q_{\max}^{2rk}}
\end{equation}
for some integer $q_{\max}$ that is greater than or equal to all the elements of $S$.
\end{definition}

Expanding out the definitions, we see that ${\mathcal M}$ consists of all elements of $\T^d$ of the form $\frac{a}{q} + \theta \mod \Z^d$, where $q$ is the product of at most $k$ elements of $S$, $a \in \Z^d$, and $\theta \in [-\eps,\eps]^d$.  The major arc sets ${\mathcal M}$ considered here are more general than the ones constructed in \cite{IW}, which involved a specific choice of $S$ involving a partition of all the primes up to a certain threshold.  In Section \ref{num-sec} we explain how the major arcs in \cite{IW} become a special case of the ones considered here. However, as we shall see, the exact structure of the set $S$ plays almost no role in the Ionescu--Wainger theory, other than via the upper bound $q_{\max}$ on the elements of $S$. The parameter $c$ is of minor technical importance and the reader may wish to fix it as an absolute constant (e.g., $c=1/2$) for most of the following discussion. In typical applications one should think of the quantities $d,k,r$ as being bounded, $|S|$ and $q_{\max}$ as being large, and $\eps$ as being quite small.

We can now state our first form of the Ionescu--Wainger multiplier theorem.  To simplify the bounds slightly we adopt the notation
$$ \Log x \coloneqq \log(2+x).$$

\begin{theorem}[Ionescu--Wainger multiplier theorem, real form]\label{main}  Let $(d,k,S,\eps)$ be a major arc parameter set, and let $H$ be a finite-dimensional Hilbert space.  Let $m \in C^\infty_c(\R^d)$ be supported on $[-\eps,\eps]^d$.  Then if $(d,k,S,\eps)$ is $(r,c)$-good for some integer $r \geq 1$ and $0 < c < 1$, one has
$$
\| \Op_{m;\Sigma_{\leq k}} \|_{B(\ell^{2r}(\Z^d;H))}
\leq O_c(1)^d O(r \Log^{1/2}(k))^k \|\Op_m\|_{B(L^{2r}(\R^d))};$$
more generally, one has
\begin{align*}
\| \sum_{A \in \binom{S}{\leq k}} \epsilon_A \Op_{m;\Sigma_A} \|_{B(\ell^{2r}(\Z^d;H))}
 \leq O_c(1)^d O(r \Log^{1/2}(k))^k \|\Op_m\|_{B(L^{2r}(\R^d))}.
\end{align*}
whenever $\epsilon_A$ is a complex number with $|\epsilon_A| \leq 1$ for each $A \in \binom{S}{\leq k}$.
\end{theorem}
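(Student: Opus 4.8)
We establish the displayed signed estimate; the first bound is the case $\epsilon_A\equiv 1$, since $\Sigma_{\leq k}$ is the disjoint union of the sets $\Sigma_A$ with $A\in\binom{S}{\leq k}$, so that $\Op_{m;\Sigma_{\leq k}}=\sum_A\Op_{m;\Sigma_A}$. The plan has three stages. First, a \emph{sampling} step conjugates the discrete operator to an adelic Fourier multiplier on $L^{2r}(\A_\Z^d;H)=L^{2r}(\R^d\times\hat\Z^d;H)$; this is where the ``adelic'' viewpoint and the $(r,c)$-goodness hypothesis enter. Second, a \emph{factorization} splits that adelic multiplier into a Euclidean factor, which supplies exactly the $\|\Op_m\|_{B(L^{2r}(\R^d))}$ on the right, and a purely arithmetic factor on $L^{2r}(\hat\Z^d;H)$. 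Third --- the crux --- one bounds the arithmetic factor by $O(r\Log^{1/2}(k))^k$ by an induction on $k$.

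For the sampling step we may assume $\F_{\Z^d}f$ is supported in ${\mathcal M}=\Sigma_{\leq k}+[-\eps,\eps]^d$, since the operator depends only on the restriction of $\F_{\Z^d}f$ to ${\mathcal M}$. The smallness condition \eqref{small} forces $\eps<c/(2rq_{\max}^{2rk})\leq 1/(2q_{\max}^{2k})$ (using $r\geq1$, $0<c<1$), while every $\alpha\in\Sigma_{\leq k}$ has order dividing some $Q_A=\prod_{s\in A}s\leq q_{\max}^{k}$; hence two distinct points of $\Sigma_{\leq k}$ (whose difference has order dividing $q_{\max}^{2k}$) differ by at least $q_{\max}^{-2k}>2\eps$ in some coordinate, so the arcs $\alpha+[-\eps,\eps]^d$ with $\alpha\in\Sigma_{\leq k}$ are pairwise disjoint in $\T^d$. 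We therefore split $\F_{\Z^d}f=\sum_\alpha g_\alpha$ with $g_\alpha$ supported on $\alpha+[-\eps,\eps]^d$, and, viewing $\Sigma_{\leq k}$ inside the arithmetic factor $(\Q/\Z)^d$ of the dual group $\R^d\times(\Q/\Z)^d$ of $\A_\Z^d$ and transplanting each $g_\alpha$ onto the adelic arc $\{\alpha\}\times[-\eps,\eps]^d$, we obtain $\iota f\in L^{2r}(\A_\Z^d;H)$, adelically Fourier supported in $\Sigma_{\leq k}\times[-\eps,\eps]^d$, such that $\iota$ intertwines $\sum_A\epsilon_A\Op_{m;\Sigma_A}$ with the adelic multiplier of symbol $(\theta,\chi)\mapsto m(\theta)\sum_A\epsilon_A 1_{\Sigma_A}(\chi)$. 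That $\|\iota f\|_{L^{2r}(\A_\Z^d;H)}$ is comparable to $\|f\|_{\ell^{2r}(\Z^d;H)}$ up to a factor $O_c(1)^d$ is an exact Plancherel identity for $2r=2$; for general $r$ one proves it by running the sampling argument behind Proposition \ref{msw} on each arc (each arc being narrow enough for that principle to apply) and patching, and it is precisely the $2r$-fold Hölder and lattice-point counting in the patching that consumes the extra room $q_{\max}^{2rk}$, rather than $q_{\max}^k$, in \eqref{small}.

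The adelic symbol is the product of $m(\theta)$ on $\R^d$ with $\Pi:=\sum_{A\in\binom{S}{\leq k}}\epsilon_A 1_{\Sigma_A}$ on $(\Q/\Z)^d$, so the adelic operator is $(\Op_m\otimes I)(I\otimes\Op_\Pi)$ on $L^{2r}(\R^d\times\hat\Z^d;H)$. Each factor acts on one variable pointwise in the other at the common exponent $2r$, so Fubini bounds its norm by $\|\Op_m\|_{B(L^{2r}(\R^d;H))}\cdot\|\Op_\Pi\|_{B(L^{2r}(\hat\Z^d;H))}$, and the first factor equals $\|\Op_m\|_{B(L^{2r}(\R^d))}$ by the constant-one Marcinkiewicz--Zygmund $\ell^2$-valued extension for Hilbert targets. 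Since $\Sigma_A\subseteq\T^d[Q_A]$ only sees the finite quotient $G:=\prod_{s\in S}(\Z/s\Z)^d$ of $\hat\Z^d$, a conditional-expectation contraction reduces matters to bounding $\Op_\Pi$ on $L^{2r}(G;H)$, where, by the Chinese Remainder Theorem and pairwise coprimality of $S$, $\Op_{1_{\Sigma_A}}=\prod_{s\in A}(I-\E_s)\prod_{s\notin A}\E_s$ with $\E_s$ the conditional expectation that integrates out the $s$-th coordinate.

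The substantive work is the bound $\|\Op_\Pi\|_{B(L^{2r}(G;H))}\leq O(r\Log^{1/2}(k))^k$, \emph{uniformly in $S$}, which I would prove by induction on $k$ (the case $k=0$ being immediate, as then $\Op_\Pi=\epsilon_\emptyset\prod_s\E_s$ is a contraction). For the inductive step, fix a linear order on $S$ and decompose $\Pi$ according to the largest coordinate $s_0$ occurring in the support of a character (the empty-support term being a contraction): this writes $\Op_\Pi$, up to a contraction, as $\sum_{s_0\in S}(I-\E_{s_0})\big(\prod_{s>s_0}\E_s\big)V_{s_0}$, where $V_{s_0}$ is the corresponding ``$\leq(k-1)$''-operator for the sub-collection $\{s\in S:s<s_0\}$ and is bounded on $L^{2r}$ by $O(r\Log^{1/2}(k-1))^{k-1}$ by the inductive hypothesis. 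The structural point is that, for each fixed $f$, the functions $(I-\E_{s_0})\big(\prod_{s>s_0}\E_s\big)V_{s_0}f$ are Fourier supported in pairwise disjoint blocks which are the successive martingale-difference blocks of the coordinate filtration on $G$; a Burkholder-type square-function inequality for Hilbert-valued martingales then controls their sum by the associated square function, at cost $O(r)$ at exponent $2r$ and --- crucially --- with \emph{no} factor of $|S|$, after which a square-function-valued strengthening of the inductive hypothesis feeds back in, together with a Rademacher--Menshov-type reorganization of the at most $k$ support-size sublevels that I expect to be responsible for the remaining $\Log^{1/2}(k)$. The main obstacle --- essentially the whole difficulty of the theorem --- is making this induction close: running it from the start in the $\ell^2$-valued (square-function) form so the hypothesis can be reinserted, keeping every application of the martingale inequality free of $|S|$, and organizing the support-size parameter dyadically so that the per-level loss is $O(r\Log^{1/2}(k))$ rather than $O(r\sqrt{k})$.
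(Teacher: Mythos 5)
Your high-level strategy — transplant to an adelic multiplier, factor it into $\Op_m\otimes I$ times $I\otimes\Op_\Pi$, bound the Euclidean factor by Marcinkiewicz--Zygmund, and control the arithmetic factor $\Op_\Pi$ on $L^{2r}(\hat\Z^d;H)$ — is genuinely different from the paper's route and is morally attractive, but both load-bearing steps have gaps serious enough that, as written, the argument does not go through.

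First, the sampling step. You assert that $\|\iota f\|_{L^{2r}(\A_\Z^d;H)}\sim\|f\|_{\ell^{2r}(\Z^d;H)}$ up to a factor $O_c(1)^d$ whenever $\F_{\Z^d}f$ is supported on the major-arc set $\mathcal M$, to be proven by running the single-cube Shannon argument of Proposition~\ref{qss} on each arc and ``patching.'' That patching \emph{is} the content of the Ionescu--Wainger theorem; it cannot be dismissed as lattice-point counting. The paper's Theorem~\ref{main-sampling} establishes precisely this comparison, but with constant $\exp(O_c(d)+O(k\Log(r\Log k)))$, \emph{not} $O_c(1)^d$, and its proof runs through Theorem~\ref{ortho-apply} (superorthogonality plus the sunflower bound plus the Rubio de Francia estimate), i.e.\ the same machinery used to prove Theorem~\ref{main}. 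Invoking the sampling equivalence at the strength you claim is unjustified, and invoking it at the correct strength from the paper is circular. Note also that the paper proves its adelic multiplier theorem, Theorem~\ref{main-adelic}, \emph{from} Theorem~\ref{main} and Theorem~\ref{main-sampling}, so the logical arrow points the opposite direction from what your plan needs.

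Second, the arithmetic bound $\|\Op_\Pi\|_{B(L^{2r}(G;H))}\leq O(r\Log^{1/2}(k))^k$, uniformly in $S$, is the paper's estimate \eqref{x3}, and in the paper it is \emph{derived} from the same $\ell^{2r}$ major-arc framework rather than being elementary. Your proposed martingale induction is only a sketch, and you acknowledge that closing it (an $S$-uniform square-function step plus a Rademacher--Menshov reorganization by support size, aiming for $\Log^{1/2}(k)$ rather than $\sqrt k$) is ``essentially the whole difficulty of the theorem.'' In the paper that difficulty is resolved by two specific inputs you do not reproduce: the probabilistic decoupling trick that gives the dimensional-loss-free Khintchine bound $O(r)^{k/2}$ for the superorthogonal hypersystem (Theorem~\ref{sys-ortho}(ii)), and the recent sunflower bound $\Sun(k,r)\leq O(r\Log k)^k$ of Rao and Bell--Chueluecha--Warnke, which via Lemma~\ref{per} and Corollary~\ref{sunsquare} is exactly what produces the $\Log^{1/2}(k)$ factor. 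A Burkholder-type square function at exponent $2r$ applied naively to the $|S|$ coordinate filtration has no mechanism to avoid a loss in $|S|$, and the ``dyadic reorganization by support size'' does not obviously encode the sunflower combinatorics; your own phrasing (``I expect,'' ``I would prove'') signals that this is a plan, not a proof. So the proposal correctly identifies the shape of the problem and proposes an appealing alternative architecture, but neither of its two pillars is established, and one of them (sampling at $O_c(1)^d$ loss) appears to be false as stated.
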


The factor of $O_c(1)^d O(r \Log^{1/2}(k))^k$ looks somewhat messy, but the key point is that it is uniform in the parameters $S, \eps, H, m$; in particular it can be simplified to $O_{c,d,k,r}(1)$.  The original version of this result in \cite{IW}, when adapted to this notion of major arc, gave instead a bound of the form $O_{c,d,k,r}(\Log^k |S|)$, which was later refined in \cite{mirek} to $O_{c,d,k,r}( \Log |S| )$ (see also \cite{MST0}, \cite{MST}, \cite{MSZ3}).  The dependence of $c$ will be unimportant in applications as one can typically take $c$ to equal a constant value such as $c=1/2$.  The restriction to even integer exponents $2r$ will be removed in Theorem \ref{main-adelic} below (at the cost of worsening the bounds slightly).  We work with finite dimensional Hilbert spaces $H$ here to avoid technical complications, but one can extend this result to separable Hilbert spaces without difficulty by a standard limiting argument.

We prove Theorem \ref{main} in Section \ref{proofs-sec}, after some preliminaries in Sections \ref{super-sec}, \ref{sunflower-sec}.  The argument uses the same basic approach as previous proofs of the Ionescu--Wainger theorem in the literature (particularly \cite[Theorem 2.1]{MSZ3}), which we summarize as follows.

\begin{itemize}
\item[(i)]  One begins by exploiting ``Type II superorthogonality'' (following the terminology recently introduced by Pierce \cite{pierce}) of the terms $\Op_{m;\Sigma_A} f$ (arising from ``denominator orthogonality'' of the rational set $\Sigma_{\leq k}$) to estimate the $\ell^{2r}(\Z^d;H)$ norm of $\sum_{A \in \binom{S}{\leq k}} \epsilon_A \Op_{m;\Sigma_A} f$ by the $\ell^{2r}(\Z^d; H^{[\leq k]})$ of the square function $(\Op_{m;\Sigma_A} f)_{A \in \binom{S}{\leq k}}$ that takes values in $H^{[\leq k]} \coloneqq H^{\binom{S}{\leq k}}$, in the spirit of reverse square function estimates of Khintchine type.
\item[(ii)]  Using a ``nonconcentration estimate'', one estimates this square function norm by an expression summing over various ``sunflowers'' in $S$.
\item[(iii)]  By exploiting ``numerator orthogonality'' of the rational set $\Sigma_{\leq k}$, one estimates this resulting sum over sunflowers by a more tractable square function involving the functions $\Op_{m;\alpha + \Sigma_{\subseteq A_0}} f$ that are summed over cosets of a fixed finite subgroup $\Sigma_{\subseteq A_0} = \T^d[Q_{A_0}]$ of $\T^d$.
\item[(iv)]  At this point the symbol $m$ can be disposed of using the Marcinkiewicz--Zygmund theorem, and then the resulting quantity can be estimated using a square function estimate of Rubio de Francia type \cite{rubio}.
\end{itemize}

Our main innovations are to eliminate logarithmic losses in (i) using the probabilistic decoupling trick (cf. \cite{pena}), and to obtain efficient bounds in (ii) by using recent progress \cite{rao} on the sunflower conjecture of Erd\H{o}s and Rado \cite{erdos}.

We also interpret these results through the lens of adelic harmonic analysis, following the slogan
$$ \hbox{Major arc analysis on } \Z^d \approx \hbox{Low frequency analysis on } \A_\Z^d$$
recently advocated (in the one-dimensional setting $d=1$) in \cite{KMT}.  As reviewed in Appendix \ref{adele-sec}, we have an inclusion homomorphism
$$ \iota \colon \Z^d \to \A_\Z^d$$
and an addition homomorphism
$$ \pi \colon \R^d \times (\Q/\Z)^d \to \T^d$$
that is Fourier adjoint to $\iota$, with $\pi$ being given explicitly by
$$ \pi( \theta, \alpha ) \coloneqq \alpha + \theta$$
for $\theta \in \R^d$ and $\alpha \in (\Q/\Z)^d$.

There is a \emph{sampling map} $\Sample \colon \Schwartz(\A_\Z^d) \to \Schwartz(\Z^d)$, where $\Schwartz(\G)$ denotes the Schwartz-Bruhat space on $\G$ (as defined in Appendix \ref{adele-sec}), defined by
$$ \Sample f \coloneqq f \circ \iota.$$
As in \cite{KMT}, we say that a compact subset $\Omega$ of adelic frequency space $\R^d \times (\Q/\Z)^d$ is \emph{non-aliasing} if the projection map $\pi$ is injective on $\Omega$.  In \cite[(4.6)]{KMT} it was shown that the sampling map extends to a unitary transformation
$$ \Sample \colon L^2(\A_\Z^d)^\Omega \to \ell^2(\Z^d)^{\pi(\Omega)}$$
for any non-aliasing compact $\Omega \subset \R^d \times (\Q/\Z)^d$ (the argument was presented for $d=1$, but extends to arbitrary dimension), where we recall that $L^2(\A_\Z^d)^\Omega$ denotes the closed subspace of $L^2(\A_\Z^d)$ consisting of functions whose Fourier transform is supported in $\Omega$, and similarly for $\ell^2(\Z^d)^{\pi(\Omega)}$.  In particular one has a unitary \emph{interpolation map}
$$ \Sample_\Omega^{-1} \colon \ell^2(\Z^d)^{\pi(\Omega)} \to L^2(\A_\Z^d)^\Omega$$
that inverts $\Sample$; we extend these operators to vector-valued functions taking values in a finite-dimensional vector space in the obvious fashion.  For instance, if $\Omega$ is a set of the form $[-\eps,\eps]^d \times \Sigma$ for some $\eps > 0$ and some finite $\Sigma \subset (\Q/\Z)^d$, then $\Omega$ is non-aliasing if the elements of $\Sigma$ are separated from each other by more than $2\eps$ (in the $\ell^\infty$ metric), and then $\pi(\Omega) = \Sigma + [-\eps,\eps]^d$ and every element $f$ of $\ell^2(\Z^d)^{\pi(\Omega)}$ then has a unique Fourier representation of the form
$$ f(n) = \sum_{\alpha \in \Sigma} \int_{[-\eps,\eps]^d} e(-n \cdot (\alpha+\theta)) \F_{\Z^d} f(\alpha+\theta)\ d\theta$$
for $n \in \Z^d$, and the interpolated function $\Sample_\Omega^{-1} f \in L^2(\A_\Z^d)^\Omega$ is then given by the formula
$$ \Sample_\Omega^{-1} f (x,y) = \sum_{\alpha \in \Sigma} \int_{[-\eps,\eps]^d} e(-x \cdot \theta - y \cdot \alpha) \F_{\Z^d} f(\alpha+\theta)\ d\theta$$
and then it is clear that $f = \Sample \Sample_\Omega^{-1} f$.

From unitarity we have
$$ \| \Sample_\Omega^{-1} f \|_{L^2(\A_\Z^d)} = \|f\|_{\ell^2(\Z^d)}$$
whenever $f \in \ell^2(\Z^d)^{\pi(\Omega)}$.  In many cases we can extend this $L^2$ isometry to an $L^p$ equivalence.  For instance, we have

\begin{proposition}[Quantitative Shannon sampling theorem]\label{qss} Let $1 \leq p \leq \infty$, and $V$ be a finite-dimensional normed vector space.  If $\Omega$ is the (non-aliasing) set $\Omega \coloneqq [-\frac{c}{Q},\frac{c}{Q}]^d \times \T^d[Q]$ for some positive integer $Q$ and $0 < c < \frac{1}{2}$ then $\Sample_\Omega$ extends to a bounded invertible linear operator from $\ell^p(\Z^d;V)^{\pi(\Omega)}$ to $L^p(\Z^d;V)^\Omega$ with
$$ \| \Sample_\Omega^{-1} f \|_{L^p(\A_\Z^d;V)} = \exp( O_c(d) ) \| f \|_{\ell^p(\Z^d;V)} $$
for all $f \in \ell^p(\Z^d;V)^{\pi(\Omega)}$, or equivalently
$$ \| \Sample F \|_{\ell^p(\Z^d;V)} = \exp( O_c(d) ) \| F \|_{L^p(\A_\Z^d;V)} $$
for all $F \in L^p(\A_\Z^d;V)^{\Omega}$.
\end{proposition}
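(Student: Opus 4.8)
The plan is to reduce the claimed two-sided norm equivalence to two classical sampling estimates on $\R^d$, keeping track throughout of the fact that every implied constant arises as a $d$-th power of a one-dimensional quantity, hence as $\exp(O_c(d))$; this bookkeeping is what forces the use of tensor-power bump functions below. First I would unwind the two spaces. Since $\Omega = [-\tfrac cQ,\tfrac cQ]^d\times\T^d[Q]$ and the annihilator of $Q\hat\Z^d$ in $(\Q/\Z)^d$ is precisely $\T^d[Q]$, every $F\in L^p(\A_\Z^d;V)^\Omega$ is $Q\hat\Z^d$-periodic in its $\hat\Z^d$-variable, and so is determined by the tuple of slices $F_r\in L^p(\R^d;V)$, $r\in(\Z/Q\Z)^d$, where $F_r(x):=F(x,y)$ for any $y$ reducing to $r$ modulo $Q$; the Fourier support condition forces each $F_r$ to be band-limited to $[-\tfrac cQ,\tfrac cQ]^d$, hence continuous. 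As each coset of $Q\hat\Z^d$ in $\hat\Z^d$ carries Haar mass $Q^{-d}$, we get $\|F\|_{L^p(\A_\Z^d;V)}^p = Q^{-d}\sum_r\|F_r\|_{L^p(\R^d;V)}^p$ (with the obvious modification at $p=\infty$), while $\Sample F(n) = F_{r(n)}(n)$, $r(n):=n\bmod Q$.

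For the bound $\|\Sample_\Omega^{-1}f\|_{L^p(\A_\Z^d;V)}\le\exp(O_c(d))\|f\|_{\ell^p(\Z^d;V)}$ I would first convert the integral formula for $\Sample_\Omega^{-1}f$ from the introduction into an explicit interpolation formula. Fix $\psi=\psi_1^{\otimes d}$ with $\psi_1\in C^\infty_c(\R)$ equal to $1$ on $[-c,c]$ and supported in $(-\tfrac12,\tfrac12)$ (possible since $c<\tfrac12$), set $\phi(\theta):=\psi(Q\theta)$ and $\check\phi:=\F_{\R^d}^{-1}\phi$, so $\check\phi(u)=Q^{-d}\check\psi(u/Q)$ with $\check\psi$ a fixed Schwartz function. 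Since $\F_{\Z^d}f$ is supported in the non-aliasing set $\pi(\Omega)=\T^d[Q]+[-\tfrac cQ,\tfrac cQ]^d$, whose arcs are $\tfrac1Q$-separated, and $\phi$ is supported in $(-\tfrac1{2Q},\tfrac1{2Q})^d$ and equals $1$ on $[-\tfrac cQ,\tfrac cQ]^d$, inserting the cutoff $\phi(\theta)$ into the $\theta$-integral changes nothing; expanding $\F_{\Z^d}f$ over $\Z^d$ and using the character-sum identity $\sum_{\alpha\in\T^d[Q]}e((n-y)\cdot\alpha)=Q^d\mathbf{1}_{Q\hat\Z^d}(n-y)$ then gives
$$ \Sample_\Omega^{-1}f(x,y) = Q^d\sum_{n\equiv r(y)\,(Q)}f(n)\,\check\phi(x-n) = \sum_{n\equiv r(y)\,(Q)}f(n)\,\check\psi\big(\tfrac{x-n}{Q}\big). $$
(Equivalently $\Sample_\Omega^{-1}f=\sum_n f(n)\,\Phi(\cdot-\iota(n))$ for the Schwartz--Bruhat function $\Phi$ with $\F_{\A_\Z^d}\Phi=\phi\otimes\mathbf{1}_{\T^d[Q]}$, which makes the membership $\Sample_\Omega^{-1}f\in L^p(\A_\Z^d;V)^\Omega$ transparent, while the already-noted identity $\Sample\Sample_\Omega^{-1}=\mathrm{id}$ becomes the Poisson-summation fact $\sum_{m\in\Z^d}\check\psi(m)e(Qm\cdot\xi)=1$ for $\xi\in\pi(\Omega)$.) Feeding this formula into the measure identity, rescaling $x=Qu$, and writing $n=Qm+r'$ with $r'\in\{0,\dots,Q-1\}^d$ the standard lift of $r$, the desired bound collapses to the scale-invariant estimate: for any $t\in[0,1)^d$,
$$ \Big\|\sum_{m\in\Z^d}a(m)\,\check\psi(\cdot-m-t)\Big\|_{L^p(\R^d;V)}\le\|b\|_{\ell^1(\Z^d)}\,\|a\|_{\ell^p(\Z^d;V)},\qquad b_\ell:=\sup_{v\in[-1,1]^d}|\check\psi(\ell+v)|, $$
where $\|b\|_{\ell^1(\Z^d)}=\big(\sum_{\ell\in\Z}\sup_{|s|\le1}|\check\psi_1(\ell+s)|\big)^d=\exp(O_c(d))$; this follows by bounding the left-hand side on each unit cube $j+[0,1)^d$ by $(|a|*b)(j)$ and applying Young's inequality $\ell^1*\ell^p\to\ell^p$.

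For the reverse bound $\|\Sample F\|_{\ell^p(\Z^d;V)}\le\exp(O_c(d))\|F\|_{L^p(\A_\Z^d;V)}$, $F\in L^p(\A_\Z^d;V)^\Omega$, the measure identity reduces matters slice by slice to a Plancherel--P\'olya sampling inequality: if $g\in L^p(\R^d;V)$ is band-limited to $[-\tfrac cQ,\tfrac cQ]^d$ with $c<\tfrac12$ (so that $Q\Z^d$ oversamples the band), then $\sum_{\nu\in r'+Q\Z^d}|g(\nu)|^p\le\exp(O_c(d))\,Q^{-d}\|g\|_{L^p(\R^d;V)}^p$. I would prove this by writing $g=g*\check\phi$ (valid since $\F_{\R^d}\check\phi=\phi$ equals $1$ on the band), so that $g(\nu)=\int_{\R^d}g(\nu-Qw)\check\psi(w)\,dw$ after rescaling; Jensen's inequality in $w$ against the probability measure proportional to $|\check\psi(w)|\,dw$, then summing over $r'+Q\Z^d$ and using $\int_{[0,1)^d}\sum_{\nu\in r'+Q\Z^d}|g(\nu-Qw)|^p\,dw=Q^{-d}\|g\|_{L^p}^p$ together with $\sum_{j\in\Z^d}\sup_{v\in[0,1)^d}|\check\psi(j+v)|=\exp(O_c(d))$, gives the claim. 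Combining the two bounds with the identity $\Sample\Sample_\Omega^{-1}=\mathrm{id}$ (so that $\|f\|_{\ell^p}=\|\Sample\Sample_\Omega^{-1}f\|_{\ell^p}\le\exp(O_c(d))\|\Sample_\Omega^{-1}f\|_{L^p}\le\exp(O_c(d))\|f\|_{\ell^p}$) yields the two-sided equivalence, first for $f$ (resp. $F$) in the dense subclass lying in $\ell^1\cap\ell^2$ (resp. $L^1\cap L^2$) with the appropriate Fourier support and then in general by the standard limiting argument; this also shows $\Sample_\Omega^{-1}$ and $\Sample$ extend to mutually inverse bounded isomorphisms between the two $L^p$-spaces.

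Both of the displayed $\R^d$-estimates are entirely classical --- a quasi-Bernstein interpolation inequality and a Plancherel--P\'olya sampling inequality --- so I do not anticipate a genuine obstacle here. The points that do require care are the honest identification of $L^p(\A_\Z^d;V)^\Omega$ with the tuple of band-limited slices, the verification that each implied constant really is a $d$-th power of a one-dimensional quantity (which dictates the tensor-power choice of bumps), and the routine density reduction.
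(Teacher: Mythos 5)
Your argument is correct. Note that the paper itself does not prove this proposition: it simply cites \cite[Theorem~4.18]{KMT} and leaves the generalization to $d>1$ and the tracking of constants to the reader, so there is no in-text argument to compare against. Your self-contained version is the canonical proof of a quantitative Shannon-type sampling theorem. You extract the cardinal-series formula $\Sample_\Omega^{-1}f(x,y)=\sum_{n\equiv r(y)\,(Q)}f(n)\,\check\psi\bigl(\tfrac{x-n}{Q}\bigr)$ by inserting a tensor-power cutoff $\phi(\theta)=\psi(Q\theta)$ equal to $1$ on $[-c/Q,c/Q]^d$ and supported in $(-\tfrac{1}{2Q},\tfrac{1}{2Q})^d$, and then prove the two directions by a quasi-Bernstein majorization (dominate the interpolant on unit cubes by the discrete convolution $\|a\|*b$ and apply Young's inequality) and a Plancherel--P\'olya estimate (reproduce $g=g*\check\phi$, apply Jensen against the normalized kernel $|\check\psi|$, and periodize). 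One small imprecision: Jensen's inequality actually produces a constant $\|\check\psi\|_{L^1(\R^d)}^{p-1}$, so the displayed Plancherel--P\'olya claim $\sum_\nu\|g(\nu)\|^p\le\exp(O_c(d))\,Q^{-d}\|g\|_{L^p}^p$ with a $p$-independent $\exp(O_c(d))$ is not literally what emerges. This is harmless: after taking $p$th roots the constant becomes $\|\check\psi\|_{L^1}^{1-1/p}\bigl(\sum_j\sup_{[0,1)^d}|\check\psi(j+\cdot)|\bigr)^{1/p}$, which is bounded uniformly over $1\le p<\infty$ by $\|\check\psi\|_{L^1}\cdot\sum_j\sup_{[-1,1]^d}|\check\psi(j+\cdot)|=\exp(O_c(d))$ (and $p=\infty$ is trivial by continuity of the band-limited slices). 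Your insistence on tensor-power bumps so that every implied constant factors as a $d$th power of a one-dimensional quantity, and your Poisson-summation verification that $\Sample\Sample_\Omega^{-1}=\mathrm{id}$, are both exactly the right bookkeeping.
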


\begin{proof}  See \cite[Theorem 4.18]{KMT}, after generalizing from $d=1$ to general $d$ and carefully tracking the dependence on constants.  The result also extends to $0 < p < 1$ (after allowing the implied constants to depend on $p$ as well as $c$), but we will only need the $p \geq 1$ case here.
\end{proof}

Proposition \ref{qss} can be used to partially explain the sampling principle, Proposition \ref{msw}.  First observe that if $[-\eps,\eps]^d \times \Sigma$ is a non-aliasing set then we have the identity
\begin{equation}\label{sam}
 \Op_{m;\Sigma} \Sample f = \Sample \Op_{m \otimes 1_\Sigma} f 
\end{equation}
for any $m \in C^\infty_c(\R^d)$ supported on $[-\eps,\eps]^d$ and any $f \in \Schwartz(\A_\Z^d)$, where the tensor product $\otimes$ is defined in Section \ref{notation-sec}; see \cite[Lemma 4.12]{KMT} (extended to general dimension $d$ in the obvious fashion).   Now suppose that $Q \geq 1$ is an integer and $m \in C^\infty_c(\R^d)$ is supported in $[-\frac{c}{Q},\frac{c}{Q}]^d$ for some $0 < c < \frac{1}{2}$.  Then we may use \eqref{sam} and basic Fourier-analytic manipulations to factorize
\begin{align*}
 \Op_{m;\T^d[Q]} f &= \Op_{m;\T^d[Q]} \Sample \Sample_\Omega^{-1} \Op_{\varphi;\T^d[Q]} f \\
&= \Sample \Op_{m \otimes 1_{\T^d[Q]}} \Sample_\Omega^{-1} \Op_{\varphi;\T^d[Q]} f\\
&= \Sample \Op_{m \otimes 1} \Sample_\Omega^{-1} \Op_{\varphi;\T^d[Q]} f
\end{align*}
where $\varphi \in C^\infty_c(\R^d)$ is a function of the form
$$ \varphi(\xi_1,\dots,\xi_d) \coloneqq \prod_{j=1}^d \varphi_0( Q \xi_j),$$
$\varphi_0 \in \C^\infty_c(\R)$ is supported on $[-c',c']$ for some $c < c' < \frac{1}{2}$ that equals $1$ on $[-c,c]$, and $\Omega \coloneqq [-\frac{c'}{Q},\frac{c'}{Q}]^d \times \T^d[Q]$.  Here $1$ denotes the constant unit function on $(\Q/\Z)^d$, thus $m \otimes 1 \colon \R^d \times (\Q/\Z)^d \to \C$ denotes the function
$$ (m \otimes 1)(\theta,\alpha) \coloneqq m(\theta).$$
From Proposition \ref{msw} applied to $\varphi$ we have
$$ \| \Op_{\varphi;\T^d[Q]} \|_{B(\ell^p(\Z^d;H))} \leq O_{c,c'}(1)^d$$
while from working on each fibre $\R^d \times \{y\}$ of $\A_\Z^d = \R^d \times \hat \Z^d$ separately and using the Marcinkiewicz--Zygmund theorem (Theorem \ref{mz}), we have
$$ \|\Op_{m \otimes 1}\|_{B(L^p(\A_\Z^d;H))}
= \|\Op_{m}\|_{B(L^p(\R^d;H))} = \|\Op_{m}\|_{B(L^p(\R^d))} $$
and hence from Proposition \ref{qss} we have
$$ \| \Op_{m;\Sigma} \|_{B(\ell^p(\Z^d;H))} \leq O_{c,c'}(1)^d \|\Op_{m}\|_{B(L^p(\R^d))} $$
which recovers a slightly weaker version of Proposition \ref{msw}; in fact with a bit more effort (applying a smooth partition of unity to $m$ followed by the triangle inequality) one can in fact recover the full strength of Proposition \ref{msw}.  Admittedly, there is some circularity here since Proposition \ref{msw} was used in the proof, but only for the bump function $\varphi$ and not for arbitrary multipliers $m$.

It turns out that there is a similar phenomenon for major arcs:

\begin{theorem}[Major arc sampling]\label{main-sampling}  Let $(d,k,S,\eps)$ be a major arc parameter set, which is $(r,c)$ good for some $r \geq 1$ and $0 < c < 1$.  Set $\Omega \coloneqq [-\eps,\eps]^d \times \Sigma_{\leq k}$.  Then for any finite-dimensional Hilbert space $H$ and $(2r)' \leq p \leq 2r$, the interpolation operator $\Sample_\Omega^{-1}$ extends to a bounded invertible linear operator from $\ell^p(\Z^d;H)^{\pi(\Omega)}$ to $L^p(\Z^d;H)^\Omega$, with
$$ \| \Sample_\Omega^{-1}f \|_{L^p(\A_\Z^d;H)} = \exp\left( O_c(d) + O( k \Log(r \Log k) ) \right) \| f \|_{\ell^p(\Z^d;H)} $$
for all $f \in \ell^p(\Z^d;H)^{\pi(\Omega)}$, or equivalently
$$ \| \Sample F \|_{\ell^p(\Z^d;H)} = \exp\left( O_c(d) + O( k \Log(r \Log k) )\right ) \| F \|_{L^p(\A_\Z^d;H)} $$
for all $F \in L^p(\A_\Z^d;H)^{\Omega}$.
\end{theorem}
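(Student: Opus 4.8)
The plan is to prove the equivalent statement that $\|\Sample F\|_{\ell^p(\Z^d;H)}$ and $\|F\|_{L^p(\A_\Z^d;H)}$ are comparable, up to a factor $C \coloneqq \exp(O_c(d) + O(k\Log(r\Log k)))$, for every $F \in L^p(\A_\Z^d;H)^\Omega$ and every $(2r)' \le p \le 2r$. The $p = 2$ case (with $C = 1$) is the unitarity of $\Sample$ on $L^2(\A_\Z^d)^\Omega$ recalled before the statement. Since $\Sample|_{L^2(\A_\Z^d)^\Omega}$ and $\Sample_\Omega^{-1}|_{\ell^2(\Z^d)^{\pi(\Omega)}}$ are mutually adjoint with respect to the $L^2$ pairings, it suffices to establish, for $p = 2r$, that $\Sample$ is an isomorphism from $L^{2r}(\A_\Z^d;H)^\Omega$ onto $\ell^{2r}(\Z^d;H)^{\pi(\Omega)}$ with $\|\Sample\| \le C$ and $\|\Sample_\Omega^{-1}\| \le C$: complex interpolation of $\Sample$ and of $\Sample_\Omega^{-1}$ against the $p=2$ isometry then yields the range $2 \le p \le 2r$, and the range $(2r)' \le p \le 2$ follows by duality.

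Both $p=2r$ bounds are proved by the four-step scheme used for Theorem \ref{main}, carrying the sampling operator along via the intertwining identity \eqref{sam}. I would first consider the bound $\|\Sample F\|_{\ell^{2r}(\Z^d;H)} \le C\|F\|_{L^{2r}(\A_\Z^d;H)}$ for $F \in L^{2r}(\A_\Z^d;H)^\Omega$. After the routine reduction to $F$ Fourier supported well inside $\Omega$ (costing only $\exp(O(d))$), fix a bump $\varphi \in C^\infty_c(\R^d)$ supported in $[-\eps,\eps]^d$ with $\Op_{\varphi \otimes 1_{\Sigma_{\le k}}} F = F$, write $\Sigma_{\le k} = \bigcup_{A \in \binom{S}{\le k}} \Sigma_A$ (a disjoint union), and decompose $\Sample F = \sum_{A} g_A$ where $g_A \coloneqq \Op_{\varphi;\Sigma_A}(\Sample F) = \Sample(\Op_{\varphi \otimes 1_{\Sigma_A}} F)$ by \eqref{sam}, so that $g_A$ is Fourier supported in $\Sigma_A + [-\eps,\eps]^d$. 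Now: (i) Type II superorthogonality of the $g_A$---a consequence of denominator orthogonality of $\Sigma_{\le k}$, which is a purely arithmetic property of $(\Q/\Z)^d$ and therefore insensitive to whether one works on $\Z^d$ or on $\A_\Z^d$---together with the probabilistic decoupling trick \cite{pena} passes to the $\ell^{2r}$-norm of the square function $(g_A)_{A \in \binom{S}{\le k}}$; (ii) a non-concentration estimate reorganizes this square function into a controlled sum over sunflowers in $\binom{S}{\le k}$; (iii) numerator orthogonality of $\Sigma_{\le k}$, together with Rao's bound \cite{rao} on the Erd\H{o}s--Rado sunflower conjecture, reduces this to, for each sunflower core $A_0$, a square function over the cosets of the fixed subgroup $\Sigma_{\subseteq A_0} = \T^d[Q_{A_0}]$; and (iv) since $Q_{A_0} \le q_{\max}^{|A_0|} \le q_{\max}^{k} \le q_{\max}^{2rk}$, the $(r,c)$-goodness hypothesis \eqref{small} forces $\eps < \frac{1}{2Q_{A_0}}$, so $[-\eps,\eps]^d \times \T^d[Q_{A_0}]$ is non-aliasing and Proposition \ref{qss} applies with $Q = Q_{A_0}$ and with a constant $\exp(O_c(d))$ that is uniform over the (possibly numerous) cores; combining this with the Marcinkiewicz--Zygmund theorem (to discard the symbol $\varphi$) and a Rubio de Francia square function estimate \cite{rubio} on the Euclidean factor $\R^d$ bounds the remaining quantity by $\exp(O_c(d))\|F\|_{L^{2r}(\A_\Z^d;H)}$. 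The companion bound $\|F\|_{L^{2r}(\A_\Z^d;H)} \le C\|\Sample F\|_{\ell^{2r}(\Z^d;H)}$ is obtained by running exactly the same four steps on the adelic side: write $F = \sum_A F_A$, where $F_A$ (Fourier supported in $[-\eps,\eps]^d \times \Sigma_A$) is the interpolation of $g_A$ from the non-aliasing set $[-\eps,\eps]^d \times \T^d[Q_A]$, then apply Type II superorthogonality and the non-concentration/sunflower machinery in $L^{2r}(\A_\Z^d;H)$ and finish with Proposition \ref{qss} and Rubio de Francia.

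The main obstacle is the transfer bookkeeping in step (iv) and its adelic counterpart. One must check that every invocation of Proposition \ref{qss} is to a genuinely non-aliasing cube---this is precisely why the exponent in the goodness condition \eqref{small} is taken to be $q_{\max}^{2rk}$ rather than something smaller---and, more delicately, that the square functions arising on the $\ell^{2r}(\Z^d)$ side and on the $L^{2r}(\A_\Z^d)$ side match under \eqref{sam} closely enough that the only surviving dependence on $S$ is through the bound $q_{\max}$, never through $|S|$ or $Q_S = \prod_{q \in S} q$ (which are enormous in applications). Keeping the combinatorial losses from steps (i)--(iii) at $\exp(O(k \Log(r \Log k)))$---the probabilistic decoupling being what removes a logarithmic loss in (i) and Rao's sunflower bound what removes an $|S|$-dependent loss in (iii)---is the quantitatively delicate part; the remaining ingredients (the partition-of-unity reduction, the interpolation, and the duality) are routine.
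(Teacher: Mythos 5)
Your $p=2r$ argument follows the same four-step scheme and identifies exactly the right ingredients (denominator superorthogonality with probabilistic decoupling, the sunflower/non-concentration step with Rao's bound, numerator orthogonality, then Shannon sampling, Marcinkiewicz--Zygmund, and a Rubio-de-Francia type estimate), and it correctly observes that the goodness condition with exponent $q_{\max}^{2rk}$ is what makes every invocation of Proposition \ref{qss} non-aliasing. What the paper does, structurally, is package steps (i)--(iii) and their converse into the two-sided characterizations $\|f\|_{\ell^{2r}} \approx \bigl(\sum_{A_0}\|(f_{\alpha+\Sigma_{A_0}})_\alpha\|^{2r}_{\ell^{2r}}\bigr)^{1/2r}$ (Theorem \ref{ortho-apply}(i)) and its adelic twin (Theorem \ref{ortho-apply-arith}), and then match the two square functions term by term with Proposition \ref{qss}; your plan of ``running the scheme separately in each direction'' performs the same moves, just less compactly. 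So the $p=2r$ core is essentially right.

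The genuine gap is the passage from $p=2r$ to general $p$. You write that ``complex interpolation of $\Sample$ and of $\Sample_\Omega^{-1}$ against the $p=2$ isometry then yields the range $2 \le p \le 2r$, and the range $(2r)' \le p \le 2$ follows by duality,'' but this is not available as stated. The domain $\ell^p(\Z^d;H)^{\pi(\Omega)}$ (and the codomain $L^p(\A_\Z^d;H)^\Omega$) are closed, Fourier-restricted subspaces of the full $L^p$ spaces; one cannot interpolate these subspaces, nor take adjoints between them, without knowing that they form a compatible interpolation family. There is no sharp Fourier projection onto the major arcs that is bounded on $\ell^p$, so this is not automatic. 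The paper gets around this by replacing $\Sample_\Omega^{-1}$ with the globally defined operator $\Sample_{\Omega'}^{-1}\Op_{\varphi;\Sigma_{\le k}}$ on all of $\ell^p(\Z^d;H)$ (with $\Op_{\varphi;\Sigma_{\le k}}$ a smooth major-arc cutoff, whose $\ell^p$ boundedness is precisely the content of Theorem \ref{main}); the paper emphasizes ``Now that the Fourier restriction has been removed, interpolation becomes available.'' The same device is needed for the duality step, since the dual of $\ell^p(\Z^d)^{\pi(\Omega)}$ is a quotient rather than the subspace $\ell^{p'}(\Z^d)^{\pi(\Omega)}$, and the paper instead dualizes $\Sample_{\Omega'}^{-1}\Op_{\varphi;\Sigma_{\le k}}$ to $\Op_{\varphi;\Sigma_{\le k}}\Sample$, both of which act between full $L^p$ spaces. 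Your proposal identifies the right bump function $\varphi$ but uses it only inside the $p=2r$ argument; you need it again, in a different role, to make the interpolation and duality legitimate. Separately, note that with this fix the paper interpolates directly between $p=2r$ and $p=(2r)'$ (after establishing the latter by duality), which is slightly cleaner than your proposed interpolation against $p=2$ followed by a separate duality step, though both routes work once the operators are extended.
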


We prove this theorem in Section \ref{proofs-sec}, by reusing the machinery used to establish Theorem \ref{main}.  As a consequence of this sampling theorem, we can obtain a more general ``adelic'' version of the Ionescu--Wainger multiplier theorem, in which one transfers a multiplier on $\A_\Z^d = \R^d \times \hat \Z^d$ (rather than on $\R^d$) to the lattice $\Z^d$, or equivalently one allows the use of a different multiplier on each major arc.  More precisely, given $m \in L^\infty(\R^d \times (\Q/\Z)^d)$ and a finite set $\Sigma$, define the multiplier $\Op_{m;\Sigma} \colon \ell^2(\Z^d) \to \ell^2(\Z^d)$ by the formula
$$ \Op_{m;\Sigma} \coloneqq \sum_{\alpha \in \Sigma} \Op_{m(\cdot,\alpha); \alpha}$$
or equivalently
$$ \Op_{m;\Sigma} f(n) = \sum_{\alpha \in \Sigma} \int_{\R^d} m(\theta,\alpha) e(- n \cdot (\alpha + \theta) ) \F_{\Z^d} f(\alpha + \theta)\ d\theta.$$
Note that the previous definition \eqref{msig} corresponds to the special case in which the adelic symbol $m(\theta,\alpha)$ does not depend on the arithmetic component $\alpha$.

\begin{theorem}[Ionescu--Wainger multiplier theorem, adelic form]\label{main-adelic}  Let $(d,k,S,\eps)$ be a major arc parameter set, and let $H$ be a finite-dimensional Hilbert space.  Let $m \in \Schwartz(\A_\Z^d)$ be supported on $[-\eps,\eps]^d \times (\Q/\Z)^d$.  Then if $(d,k,S,\eps)$ is $(r,c)$-good for some $r \geq 1$ and $0 < c < 1$, one has
$$
\| \Op_{m;\Sigma_{\leq k}} \|_{B(\ell^{p}(\Z^d;H))}
\leq O_c(1)^d O(r \Log k)^{O(k)} \|\Op_m\|_{B(L^{p}(\A_\Z^d))}$$
for any $(2r)' \leq p \leq 2r$.
\end{theorem}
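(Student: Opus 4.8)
The plan is to transfer the problem from $\Z^d$ to the adelic group $\A_\Z^d$ using the major arc sampling theorem (Theorem~\ref{main-sampling}), so that the desired bound becomes a consequence of the hypothesised estimate $\|\Op_m\|_{B(L^p(\A_\Z^d))}$; the only inputs beyond Theorems~\ref{main} and~\ref{main-sampling} are a Marcinkiewicz--Zygmund vectorisation and a harmless smooth cutoff. We may assume $\|\Op_m\|_{B(L^p(\A_\Z^d))}<\infty$, else there is nothing to prove. Write $\Omega \coloneqq [-\eps,\eps]^d \times \Sigma_{\leq k}$. The $(r,c)$-good hypothesis forces $\eps < \tfrac12 q_{\max}^{-2k}$, and distinct elements of $\Sigma_{\leq k}$ (being rationals with denominators at most $q_{\max}^k$) are $\ell^\infty$-separated by at least $q_{\max}^{-2k}$, so $\Omega$ is non-aliasing. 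Set $\eps' \coloneqq \tfrac{1+c}{2c}\eps$; then $\eps<\eps'$, the inflated parameter set $(d,k,S,\eps')$ is $(r,c')$-good with $c' \coloneqq \tfrac{1+c}{2}<1$, and $\Omega' \coloneqq [-\eps',\eps']^d \times \Sigma_{\leq k}$ is still non-aliasing. Fix $\psi \in C^\infty_c(\R^d)$ supported on $[-\eps',\eps']^d$, equal to $1$ on $[-\eps,\eps]^d$, obtained by dilating by $\eps'$ a fixed real tensor-product bump whose shape depends only on the ratio $\eps'/\eps = \tfrac{1+c}{2c}$; then the convolution kernel of $\Op_\psi$ has $L^1(\R^d)$ norm $O_c(1)^d$, so $\|\Op_\psi\|_{B(L^q(\R^d))} = O_c(1)^d$ for all $1\le q\le\infty$.

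The crucial identity is that, for $f \in \ell^2(\Z^d;H) \cap \ell^p(\Z^d;H)$,
\[ \Op_{m;\Sigma_{\leq k}} f = \Sample \, \Op_m \, \Sample_{\Omega'}^{-1} \, \Op_{\psi;\Sigma_{\leq k}} f . \]
Indeed, the $\T^d$-symbol of $\Op_{m;\Sigma_{\leq k}}$ is supported in $\pi(\Omega) = \Sigma_{\leq k} + [-\eps,\eps]^d$, and since $\eps+\eps'<2\eps'<q_{\max}^{-2k}$ is below the separation of $\Sigma_{\leq k}$, the symbol of $\Op_{\psi;\Sigma_{\leq k}}$ equals $1$ throughout $\pi(\Omega)$, whence $\Op_{m;\Sigma_{\leq k}} = \Op_{m;\Sigma_{\leq k}} \Op_{\psi;\Sigma_{\leq k}}$. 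Writing $g \coloneqq \Op_{\psi;\Sigma_{\leq k}} f$, which is Fourier supported in $\pi(\Omega')$, we have $g = \Sample \Sample_{\Omega'}^{-1} g$, and the sampling identity \eqref{sam} in its (straightforward) adelic form --- valid for functions Fourier supported on the non-aliasing set $\Omega'$, cf.\ \cite{KMT} --- gives $\Op_{m;\Sigma_{\leq k}} g = \Sample \Op_m \Sample_{\Omega'}^{-1} g$. Moreover $\Op_m \Sample_{\Omega'}^{-1} g$ is Fourier supported in $\Omega' \cap ([-\eps,\eps]^d \times (\Q/\Z)^d) = \Omega$, so it lies in $L^p(\A_\Z^d;H)^{\Omega}$.

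It remains to chain estimates, for $(2r)' \le p \le 2r$. The $\Sample$ half of Theorem~\ref{main-sampling} applied to $\Omega$ bounds $\|\Sample\,\Op_m\,\Sample_{\Omega'}^{-1} g\|_{\ell^p(\Z^d;H)}$ by $\exp(O_c(d) + O(k\Log(r\Log k)))\,\|\Op_m\,\Sample_{\Omega'}^{-1} g\|_{L^p(\A_\Z^d;H)}$; the operator norm of $\Op_m$ together with the Marcinkiewicz--Zygmund theorem (Theorem~\ref{mz}) give $\|\Op_m\,\Sample_{\Omega'}^{-1} g\|_{L^p(\A_\Z^d;H)} \le O(\sqrt r)\,\|\Op_m\|_{B(L^p(\A_\Z^d))}\,\|\Sample_{\Omega'}^{-1} g\|_{L^p(\A_\Z^d;H)}$, the factor $O(\sqrt r)$ being the $p$-dependent vectorisation constant (harmless since $\max(p,p')\le 2r$); the $\Sample_{\Omega'}^{-1}$ half of Theorem~\ref{main-sampling} applied to the $(r,c')$-good set $\Omega'$ bounds $\|\Sample_{\Omega'}^{-1} g\|_{L^p(\A_\Z^d;H)}$ by $\exp(O_c(d) + O(k\Log(r\Log k)))\,\|g\|_{\ell^p(\Z^d;H)}$; and Theorem~\ref{main} applied to $\psi$ with the parameter set $(d,k,S,\eps')$, followed by duality (taking $\psi$ real so that $\Op_{\psi;\Sigma_{\leq k}}$ is self-adjoint, hence also bounded on $\ell^{(2r)'}(\Z^d;H)$ with the same norm) and Riesz--Thorin interpolation, bounds $\|g\|_{\ell^p(\Z^d;H)}$ by $O_c(1)^d\,O(r\Log^{1/2} k)^k\,\|f\|_{\ell^p(\Z^d;H)}$. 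Multiplying, and using $\exp(O(k\Log(r\Log k))) = (2 + r\Log k)^{O(k)}$ and $O(\sqrt r)\,O(r\Log^{1/2}k)^k \le O(r\Log k)^{O(k)}$, collapses the total constant to $O_c(1)^d\,O(r\Log k)^{O(k)}$; since $\ell^2\cap\ell^p$ is dense in $\ell^p$ (as $p<\infty$), this extends to all of $\ell^p(\Z^d;H)$.

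The bulk of the difficulty has been absorbed into Theorems~\ref{main} and~\ref{main-sampling}, so this argument is essentially bookkeeping; the one delicate point is the choice of the inflation parameter $\eps'$. One needs $\Omega'$ to remain both non-aliasing and $(r,c')$-good for some $c'<1$ (so that Theorems~\ref{main} and~\ref{main-sampling} apply to it), while keeping $\eps'/\eps$ bounded away from $1$ by an amount controlled in terms of $c$ --- otherwise the bump multiplier norm $\|\Op_\psi\|_{B(L^{2r}(\R^d))}$ would degrade --- and the choice $\eps'=\tfrac{1+c}{2c}\eps$ threads this needle. The secondary nuisance, the $p$-dependence of the Marcinkiewicz--Zygmund constant, costs only $O(\sqrt r)$ and is negligible against $O(r\Log k)^{O(k)}$. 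One should also verify the elementary claims invoked above: that the symbol of $\Op_{\psi;\Sigma_{\leq k}}$ is identically $1$ on $\pi(\Omega)$, and that the adelic form of \eqref{sam} genuinely holds on $L^2(\A_\Z^d;H)^{\Omega'}$ --- both following from the denominator separation of $\Sigma_{\leq k}$ and Poisson summation for the pair $(\iota,\pi)$.
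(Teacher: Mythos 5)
Your proposal is correct and follows essentially the same route as the paper: factor $\Op_{m;\Sigma_{\leq k}}$ through an inflated bump multiplier $\Op_{\psi;\Sigma_{\leq k}}$ (the paper's $\Op_{\varphi;\Sigma_{\leq k}}$ with $\Omega' = [-\tfrac{c'}{c}\eps,\tfrac{c'}{c}\eps]^d\times\Sigma_{\leq k}$, $c'=\tfrac{1+c}{2}$), then use the adelic sampling identity to write $\Op_{m;\Sigma_{\leq k}} = \Sample\,\Op_m\,\Sample_{\Omega'}^{-1}\Op_{\psi;\Sigma_{\leq k}}$, and chain Theorem~\ref{main-sampling} (both directions), the Marcinkiewicz--Zygmund vectorisation, and the estimate $\|\Op_{\psi;\Sigma_{\leq k}}\|_{B(\ell^p)}\lessapprox 1$ obtained from Theorem~\ref{main} via duality and interpolation. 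One tiny correction: Theorem~\ref{mz} as stated in the paper gives the vectorisation constant exactly $1$, not $O(\sqrt r)$, so that factor is superfluous (it is only a harmless over-estimate and does not affect your final bound).
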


In principle this theorem converts the analysis of linear Fourier multipliers on major arcs to that of linear Fourier multipliers on the adelic space $\A_\Z^d$, which in principle is a simpler setting due to the product structure on $\A_\Z^d = \R^d \times \hat \Z^d$.  We remark that the method of proof also extends to bilinear or multilinear Fourier multipliers (as long as all exponents $p$ involved lie strictly between $1$ and $\infty$), but we do not have applications in mind for this extension\footnote{For instance, the bilinear estimates considered in \cite{KMT} typically involve the endpoint space $\ell^1$ (or even $\ell^p$ for some $p<1$), and also take values in variational norm spaces rather than Hilbert spaces, so would not be able to be directly treated by a bilinear variant of Theorem \ref{main-adelic}.} and so we leave it to the interested reader.  As this theorem no longer requires $p$ to be an even integer or its dual, it affirmatively answers the question posed after \cite[Theorem 2.1]{MSZ3}.

\subsection{Acknowledgments}

The author was partially supported by NSF grant DMS-1764034 and by a Simons Investigator Award.  The author also thanks Mariusz Mirek for helpful comments and corrections, Lillian Pierce for pointing out the reference \cite{K}, and commenters on the author's blog for further corrections.  We are particularly indebted to the anonymous referee for a very careful reading of the manuscript with many useful suggestions.

\subsection{Notation}\label{notation-sec}

Random variables will be denoted in boldface, and deterministic quantities in non-boldface.  We use $\N =\{0,1,\dots\}$ to denote the natural numbers, and $\Z_+ = \{1,2,\dots\}$ to denote the positive integers.

We use $X = O(Y)$ to denote an estimate of the form $|X| \leq CY$ for some constant $C$. We write $X \sim Y$ if $X = O(Y)$ and $Y = O(X)$. If one needs the constant $C$ to depend on parameters, we indicate this by subscripts, for instance $X \leq O_c(1)^d Y$ denotes the bound $X \leq C_c^d Y$ for some $C_c$ depending only on $c$. 

If $(X,\mu)$ is a measure space, $V = (V,\| \|)$ is a finite-dimensional normed vector space, and $1 \leq p \leq \infty$, we define $L^p(X;V)$ to denote the space of measurable functions $f: X \to V$ whose norm $\|f\|_{L^p(X;V)} \coloneqq (\int_X \|f(x)\|_V^p\ d\mu(x))^{1/p}$ is finite, up to almost everywhere equivalence, with the usual modifications at $p=\infty$.  We write $L^p(X)$ for $L^p(X;\C)$, and when $\mu$ is counting measure we write $\ell^p$ for $L^p$.  For any $1 \leq p \leq \infty$, we define the dual exponent $1 \leq p' \leq \infty$ by $1/p + 1/p' = 1$.

All Hilbert spaces will be over the complex numbers.  Given a bounded linear operator $T \colon V \to W$ between (quasi-)normed vector spaces $V, W$, we use $\|T\|_{B(V \to W)}$ to denote its operator norm; if $V=W$, we abbreviate $B(V \to V)$ as $B(V)$. We recall

\begin{theorem}[Marcinkiewicz--Zygmund theorem]\label{mz}\cite{mz}  Let $X,Y$ be measure spaces, let $0 < p < \infty$, and let $T: L^p(X) \to L^p(Y)$ be a linear operator.  Then for any finite-dimensional Hilbert space $H$, one has
$$ \| T \|_{B(L^p(X;H) \to L^p(Y;H))} \leq \| T \|_{B(L^p(X) \to L^p(Y))}.$$
\end{theorem}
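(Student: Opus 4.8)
The plan is to establish the vector-valued bound by a Gaussian randomization argument, which has the virtue of producing the constant $1$ with no dependence on $\dim H$. We may assume $\|T\|_{B(L^p(X)\to L^p(Y))} < \infty$, as otherwise there is nothing to prove. Fix a finite-dimensional Hilbert space $H$; choosing an orthonormal basis we identify $H$ with $\C^n$ for $n = \dim H$, so that a function $f \in L^p(X;H)$ is a tuple $f = (f_1,\dots,f_n)$ with each $f_j \in L^p(X)$ and the (componentwise) extension of $T$ to $L^p(X;H) \to L^p(Y;H)$ is $Tf = (Tf_1,\dots,Tf_n)$.

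The key elementary input is a dimension-free moment identity. Let $\mathbf{g}_1,\dots,\mathbf{g}_n$ be i.i.d.\ standard complex Gaussian random variables, normalized so that $\E|\mathbf{g}_1|^2 = 1$. Then for every $v = (v_1,\dots,v_n) \in \C^n$ the linear combination $\sum_{j=1}^n \mathbf{g}_j v_j$ is again a complex Gaussian, of variance $\sum_j |v_j|^2 = |v|^2$, hence equidistributed with $|v|\,\mathbf{g}_1$; consequently
$$ \E \Bigl| \sum_{j=1}^n \mathbf{g}_j v_j \Bigr|^p = c_p\, |v|^p, \qquad c_p \coloneqq \E|\mathbf{g}_1|^p \in (0,\infty), $$
where $c_p$ depends only on $p$. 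It is precisely this rotation-invariance — that the orthogonal projection of a standard Gaussian vector onto any one-dimensional subspace is again a one-dimensional standard Gaussian, regardless of the ambient dimension — that would be lost under a more naive randomization such as averaging over the unit sphere of $H$.

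Applying the identity pointwise with $v = f(x)$ and integrating in $x$ — legitimate by Tonelli's theorem, since the integrand $|\sum_j \mathbf{g}_j f_j(x)|^p$ is nonnegative — gives
$$ \|f\|_{L^p(X;H)}^p = c_p^{-1}\, \E\, \Bigl\| \sum_{j=1}^n \mathbf{g}_j f_j \Bigr\|_{L^p(X)}^p , $$
and likewise $\|Tf\|_{L^p(Y;H)}^p = c_p^{-1}\,\E\,\bigl\| \sum_j \mathbf{g}_j Tf_j \bigr\|_{L^p(Y)}^p$. In particular the right side of the first identity is finite, so for almost every realization of $(\mathbf{g}_j)$ the scalar function $\sum_j \mathbf{g}_j f_j$ lies in $L^p(X)$; for each such realization, linearity of $T$ gives $\sum_j \mathbf{g}_j Tf_j = T\bigl(\sum_j \mathbf{g}_j f_j\bigr)$, and therefore
$$ \Bigl\| \sum_j \mathbf{g}_j Tf_j\Bigr\|_{L^p(Y)} \le \|T\|_{B(L^p(X)\to L^p(Y))} \Bigl\| \sum_j \mathbf{g}_j f_j \Bigr\|_{L^p(X)}. $$
Raising to the $p$-th power, taking expectations, and substituting the two displayed identities yields $\|Tf\|_{L^p(Y;H)}^p \le \|T\|_{B(L^p(X)\to L^p(Y))}^p\, \|f\|_{L^p(X;H)}^p$, and taking $p$-th roots finishes the proof. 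Note that the triangle inequality for the $L^p$ (quasi-)norm is never used, so the argument is valid for all $0 < p < \infty$ uniformly.

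I do not expect any serious obstacle: the result is classical, and the only point needing care is the moment identity together with its independence of $n$ — the step that genuinely exploits the Hilbert (rather than merely Banach) structure of $H$, via the fact that $\sum_j \mathbf{g}_j v_j$ has variance $|v|_H^2$. Everything else is routine bookkeeping (the basis choice, Tonelli, and the definition of the operator norm).
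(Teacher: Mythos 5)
Your proof is correct and is essentially the same argument as the paper's: both identify $H$ with $\C^n$ via an orthonormal basis, randomize with i.i.d.\ complex Gaussians, use that $\sum_j \mathbf{g}_j v_j$ is Gaussian of variance $|v|^2$ so that $\E|\sum_j \mathbf{g}_j v_j|^p = c_p|v|^p$ with a dimension-free constant, apply the scalar operator bound to $\sum_j \mathbf{g}_j f_j$ for each fixed realization, and take expectations. The only differences are presentational (you spell out the Tonelli step and the almost-sure finiteness justifying the application of $T$, which the paper leaves implicit).
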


\begin{proof}  We normalize $\| T \|_{B(L^p(X) \to L^p(Y))}=1$.
Taking orthonormal bases, it suffices to show that
$$ \int_Y \left(|\sum_{i=1}^n Tf_i|^2\right)^{p/2} \leq \int_X \left(|\sum_{i=1}^n f_i|^2\right)^{p/2}$$
for any $f_1,\dots,f_n \in L^p(X)$.  If we let ${\bf g}_1,\dots,{\bf g}_n$ be independent complex gaussian variables of mean zero and variance $1$, we have from hypothesis that
$$ \int_Y |\sum_{i=1}^n {\bf g}_i Tf_i|^p \leq \int_X |\sum_{i=1}^n {\bf g}_i f_i|^p.$$
Taking expectations of both sides and noting that the sum of independent gaussians is again a gaussian, we conclude that
$$ C_p \int_Y \left(|\sum_{i=1}^n Tf_i|^2\right)^{p/2} \leq C_p \int_X \left(|\sum_{i=1}^n f_i|^2\right)^{p/2}$$
where $C_p \coloneqq \E |{\bf g}|^p$ with ${\bf g}$ a complex gaussian of mean zero and variance $1$. Since $0 < C_p < \infty$, the claim follows.
\end{proof}

If $E$ is a finite set, we use $|E|$ to denote its cardinality.
If $E,F$ are subsets of an additive group $\G = (\G,+)$ (such as the torus $\T^d)$, we write $E+F \coloneqq \{ \xi+\eta: \xi \in E, \eta \in F \}$ for their sumset.  If $\xi \in \G$, we write $\xi + E = E + \xi = E + \{\xi\}$ for the translate of $\xi$ by $E$.  These sumset notions also extend in the obvious fashion to the setting in which one of the summands lies in $\G$ and the other lies in a quotient $\G/\HH$ (for instance, if one lies in $\R^d$ and the other in $\T^d$).  We use $1_E$ to denote the indicator function of a set $E$, and $1_S$ the indicator of a statement $S$, thus for instance $1_E(x) = 1_{x \in E}$ is equal to $1$ when $x \in E$, and equal to $0$ otherwise.

We will need the following combinatorial concepts:

\begin{definition}[Nonces and sunflowers]\label{nonceflower}  Let $A_1,\dots,A_n$ be a collection of sets.
\begin{itemize}
\item[(i)]  A \emph{nonce} of the collection $A_1,\dots,A_n$ is an element $s$ that belongs to exactly one of the $A_i$.  A collection of sets is \emph{nonce-free} if there does not exist a nonce for this collection.  
\item[(ii)]  The collection $A_1,\dots,A_n$ is a \emph{sunflower} if there is a set $A_0$ contained in each of the $A_1,\dots,A_n$ (the \emph{core} of the sunflower) such that the \emph{petals} $A_1 \backslash A_0,\dots,A_n \backslash A_0$ are all disjoint.
\end{itemize}
\end{definition}

Thus for instance the sets $\{1,2\}, \{1,3\}, \{2,4\}$ contain $3$ and $4$ as nonces, whereas the collection $\{1,2\}, \{1,3\}, \{2,3\}$ is nonce-free.  Meanwhile, the collection $\{1,2\}, \{1,3\}, \{1,4\}$ is a sunflower with core $\{1\}$ and petals $\{2\}, \{3\}, \{4\}$.  The property of having a nonce is also referred to as the \emph{uniqueness property} in \cite{IW}, \cite{MSZ3}, \cite{pierce}.

If $f \colon X \to \C$ and $g \colon Y \to \C$ are functions, we define the tensor product $f \otimes g \colon X \times Y \to C$ by the formula
$$ (f \otimes g)(x,y) \coloneqq f(x) g(y).$$

If $H$ is a finite-dimensional Hilbert space and $S$ is a finite set, we use $H^S$ for the space of tuples $(u_s)_{s \in S}$ with $u_s \in H$ with inner product
$$ \langle (u_s)_{s \in S}, (v_s)_{s \in S}\rangle = \sum_{s \in S} \langle u_s, v_s \rangle.$$
For any natural number $k$, we use $H^{\otimes k}$ to denote the $k$-fold tensor product of $H$ with itself, spanned by vectors $u_1 \otimes \dots \otimes u_k$, $u_1,\dots,u_k \in H$ with
$$ \langle u_1 \otimes \dots \otimes u_k, v_1 \otimes \dots \otimes v_k \rangle = \prod_{i=1}^k \langle u_i, v_i \rangle.$$

We use $X \uplus Y$ to denote the disjoint union of $X$ and $Y$, thus $X \uplus Y$ is equal to $X \cup Y$ when $X,Y$ are disjoint and undefined otherwise.

\section{Superorthogonality}\label{super-sec}

The (upper) Khintchine inequality asserts that
$$ \left(\E |\sum_{i=1}^n \bm{\epsilon}_i z_i|^p\right)^{1/p} \leq O(p^{1/2}) \left(\sum_{i=1}^n |z_i|^2\right)^{1/2}$$
for any $1 \leq p < \infty$ and complex numbers $z_1,\dots,z_n$, where $\bm{\epsilon}_1,\dots,\bm{\epsilon}_n$ are independent random signs in $\{-1,+1\}$ of mean zero.  In the case where $p=2r$ is an even integer, this inequality can be proven by direct combinatorial expansion of the left-hand side.  As laid out recently in \cite{pierce}, this latter argument can be abstracted to more general ``Type II superorthogonal systems''.  We give the relevant definitions (as well as an extension to hypersystems) as follows.

\begin{definition}[Type II superorthogonality]\label{superortho-def}  Let $S$ be a finite set, let $X = (X,\mu)$ be a measure space, let $r$ be a positive integer, and let $H$ be a Hilbert space.
\begin{itemize}
\item[(i)] A collection $(f_s)_{s \in S}$ of functions $f_s \in L^{2r}(X;H)$ indexed by $S$ is said to be a \emph{Type II $2r$-superorthogonal system}  if one has
\begin{equation}\label{2r-orthog}
\int_X \prod_{j=1}^r \langle f_{s_j}, f_{s_{r+j}} \rangle_H\ d\mu = 0
\end{equation}
whenever $s_1,\dots,s_{2r} \in S$ is such that the singleton sets $\{s_1\},\dots,\{s_{2r}\}$ contain a nonce (as defined in Definition \ref{nonceflower}).
\item[(ii)]  A collection $(f_A)_{A \in {\mathcal A}}$ of functions $f_A \in L^{2r}(X;H)$ indexed by some family ${\mathcal A}$ of subsets of a set $S$ is said to be a \emph{Type II $2r$-superorthogonal hypersystem} if one has
\begin{equation}\label{2r-orthog-hyper}
\int_X \prod_{j=1}^r \langle f_{A_j}, f_{A_{r+j}} \rangle_H\ d\mu = 0
\end{equation}
whenever $A_1,\dots,A_{2r} \in {\mathcal A}$ is such that the sets $A_1,\dots,A_{2r}$ contain a nonce.
\end{itemize}
\end{definition}

Note that any $2r$-superorthogonal system $(f_s)_{s \in S}$ can also be viewed as a $2r$-superorthogonal hypersystem $(f_A)_{A \in \binom{S}{1}}$ by identifying each element $s \in S$ with the associated singleton $\{s\} \in \binom{S}{1}$. The nomenclature ``Type II'' is due to Pierce \cite{pierce}; there is also a stronger notion of Type I superorthogonality and a weaker notion of Type III superorthogonality discussed in that paper, but we will not need these notions here.

Several examples of superorthogonal systems are given in \cite{pierce}.  Our primary concern will come from functions supported on major arcs, but we can give another representative example of a superorthogonal hypersystem here:

\begin{example}[Polynomials of random variables]\label{poly-rand}
Let $k,R$ be positive integers.  Let $({\bf X}_s)_{s \in S}$ be $R$-wise independent random variables (thus ${\bf X}_{s_1},\dots,{\bf X}_{s_r}$ are jointly independent for any $r \leq R$ and distinct $s_1,\dots,s_r \in S$), and for each $A \in \binom{S}{\leq k}$ let ${\bf f}_A$ be a complex random variable of the form ${\bf f}_A = \sum_{j \in J_A} c_{A,j} \prod_{s \in A} f_{A,s,j}({\bf X}_s)$, where $J_A$ is a finite set, $c_{A,j}$ are complex coefficients, and each $f_{A,s,j}({\bf X}_s)$ is a function of ${\bf X}_s$ of mean zero; thus for instance ${\bf f}_\emptyset$ is a constant.  Then for any $1 \leq r \leq R/2k$, $({\bf f}_A)_{A \in \binom{S}{\leq k}}$ is a $2r$-superorthogonal hypersystem over the ambient sample space of the random variables.  Indeed, if $A_1,\dots,A_{2r}$ contains a nonce $s$, then the expression in \eqref{2r-orthog-hyper} expands into a sum of finitely many terms, each of which consists of the expectation of a product of an expression of the form $f_{A,s,j}({\bf X}_s)$, times at most $2kr-1 \leq R-1$ other expressions depending on other random variables than ${\bf X}_s$, and each of these terms vanishes by the $2R$-wise independent nature of the ${\bf X}_s$.  If the ${\bf X}_s$ are scalar random variables, then any polynomial of degree at most $k$ in the ${\bf X}_s$ can be expressed in the form $\sum_{A \in \binom{S}{\leq k}} {\bf f}_A$ for some hypersystem $({\bf f}_A)_{A \in \binom{S}{\leq k}}$ as above by removing the expectation $\E {\bf X}_s^a$ from every monomial ${\bf X}_s^a$ appearing in the polynomial and regrouping terms.
\end{example}

We now give the Khintchine inequalities for superorthogonal systems and hypersystems.

\begin{theorem}[Superorthogonal Khintchine inequality]\label{sys-ortho}  Let $k,r \in \Z_+$, let $X = (X,\mu)$ be a measure space, $S$ a finite set, and $H$ a finite-dimensional Hilbert space.
\begin{itemize}
\item[(i)]  (Khintchine for superorthogonal systems) If $(f_s)_{s \in S}$ is a Type II $2r$-superorthogonal system in $L^{2r}(X;H)$ indexed by $S$, then
$$ 
\| \sum_{s \in S} f_s \|_{L^{2r}(X;H)} \leq O(r)^{1/2} \left\| (f_s)_{s \in S} \right \|_{L^{2r}(X; H^S)}.$$
\item[(ii)]  (Khintchine for superorthogonal hypersystems) If $(f_A)_{A \in \binom{S}{\leq k}}$ is a Type II $2r$-superorthogonal hypersystem in $L^{2r}(X;H)$ then
$$ 
\| \sum_{A \in \binom{S}{\leq k}} f_A \|_{L^{2r}(X;H)} \leq O(r)^{k/2} \left\| (f_A)_{A \in \binom{S}{\leq k}} \right \|_{L^{2r}(X; H^{[\leq k]})}$$
where we adopt the notation
$$ H^{[\leq k]} \coloneqq H^{\binom{S}{\leq k}}.$$
\end{itemize}
\end{theorem}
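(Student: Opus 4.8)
The plan is to expand the $2r$-th power and reduce both parts to a pointwise scalar inequality. Writing $g_s \coloneqq \|f_s\|_H$, we have
$$ \Big\| \sum_{s\in S} f_s \Big\|_{L^{2r}(X;H)}^{2r} = \int_X \Big( \sum_{s,s'\in S} \langle f_s, f_{s'}\rangle_H \Big)^{\!r}\, d\mu = \sum_{s_1,\dots,s_{2r}\in S} \int_X \prod_{j=1}^r \langle f_{s_j}, f_{s_{r+j}}\rangle_H\, d\mu, $$
and by Type II $2r$-superorthogonality every term whose index tuple $(s_1,\dots,s_{2r})$ has a nonce (some value occurring exactly once) vanishes, while Cauchy--Schwarz in $H$ bounds each surviving term by $\int_X \prod_{j=1}^{2r} g_{s_j}\, d\mu$. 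Thus (i) reduces to the pointwise estimate
$$ \sum_{\substack{(s_1,\dots,s_{2r})\in S^{2r} \\ \text{every value occurs} \ne 1 \text{ time}}} \prod_{j=1}^{2r} g_{s_j} \;\le\; O(r)^{r}\Big(\sum_{s\in S} g_s^2\Big)^r $$
for nonnegative reals $(g_s)_{s\in S}$; the analogous expansion using hypersystem superorthogonality reduces (ii) to the same statement with $S$ replaced by $\binom{S}{\le k}$, with $O(r)^r$ replaced by $O(r)^{kr}$, and with ``every value occurs $\ne 1$ time'' replaced by ``every element of $S$ lies in $\ne 1$ of the sets $A_1,\dots,A_{2r}$''.

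For the scalar inequality behind (i) I would use generating functions. Since $S$ is finite, the entire function $P(z) \coloneqq \prod_{s\in S}(e^{g_s z} - g_s z)$ has nonnegative Taylor coefficients, and a short bookkeeping identity shows that the left-hand side of the displayed inequality equals $(2r)!$ times the coefficient of $z^{2r}$ in $P$, hence is at most $(2r)!\,P(z_0)\,z_0^{-2r}$ for every $z_0>0$. The elementary bound $e^w - w \le e^{w^2}$ for $w\ge 0$ (valid because $e^{w^2}-e^w+w$ vanishes to second order at $w=0$ and has nonnegative second derivative, since $(2+4w^2)e^{w^2}\ge 2e^{w^2}\ge e^w$) gives $P(z_0)\le \exp(z_0^2\sum_s g_s^2)$; optimising in $z_0$ (take $z_0 = \sqrt r/\sigma$ with $\sigma^2=\sum_s g_s^2$) and using $(2r)!\le O(1)^r r^{2r}$ then yields the claimed $O(r)^r\sigma^{2r}$, which after taking $2r$-th roots is the $O(\sqrt r)$ of part~(i). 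I want to flag that a direct count over set-partitions of $\{1,\dots,2r\}$ does \emph{not} work: once one allows blocks of all sizes $\ge 2$ there are far more than $O(r)^r$ such partitions, and the crude per-partition bound $\sigma^{2r}$ is much too lossy for partitions with large blocks — the generating-function resummation is precisely what trades the number of partitions against the sizes of their blocks. (One could instead compare with i.i.d.\ Gaussians, at the cost of a slightly more delicate argument.)

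For (ii) the crucial extra ingredient is a probabilistic decoupling. First split $\binom{S}{\le k}$ by cardinality and use the triangle inequality, so it suffices to prove, for each fixed $j\le k$, that a $2r$-superorthogonal hypersystem $(f_A)_{A\in\binom{S}{j}}$ satisfies $\|\sum_{A}f_A\|_{L^{2r}}\le O(r)^{j/2}\|(f_A)_A\|_{L^{2r}(X;H^{\binom{S}{j}})}$. Colour $S$ randomly with $j$ colours, $S=C_1\uplus\dots\uplus C_j$ with each element placed independently and uniformly, and call a $j$-set \emph{rainbow} if it meets each colour class exactly once; any fixed $j$-set is rainbow with probability $j!/j^j\ge e^{-j}$, so $\sum_{A\in\binom{S}{j}}f_A = (j^j/j!)\,\E\big[\sum_{A\ \text{rainbow}}f_A\big]$ and by Minkowski's integral inequality it suffices to bound $\|\sum_{A\ \text{rainbow under }c}f_A\|_{L^{2r}}$ uniformly over colourings $c$, the factor $j^j/j!\le e^j$ being harmless. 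For a fixed colouring a rainbow $j$-set is exactly a tuple $(a^{[1]},\dots,a^{[j]})\in C_1\times\dots\times C_j$, so $\sum_{A\ \text{rainbow}}f_A=\sum_{a^{[1]}\in C_1}\cdots\sum_{a^{[j]}\in C_j}f_{\{a^{[1]},\dots,a^{[j]}\}}$, and I would peel off the coordinates $a^{[1]},\dots,a^{[j]}$ one at a time, applying part (i) at the $\ell$-th step with $H$ replaced by the finite-dimensional Hilbert space $H^{C_1\times\dots\times C_{\ell-1}}$. The mechanism that makes this legal is that in any family of rainbow $j$-sets the element of colour $\ell$ is \emph{uniquely} the $\ell$-th coordinate, so a value occurring exactly once among the $\ell$-th coordinates of $2r$ such sets is automatically a nonce for the corresponding family of $j$-sets; hence the partial sums being summed over the $\ell$-th coordinate form genuine Type II $2r$-superorthogonal systems, and each application of part (i) contributes one factor $O(\sqrt r)$, for a total of $O(r)^{j/2}\le O(r)^{k/2}$. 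Finally the resulting tuple norm $\|(f_{\{a^{[1]},\dots,a^{[j]}\}})_{\vec a}\|_{L^{2r}(X;H^{C_1\times\dots\times C_j})}$ is at most $\|(f_A)_{A\in\binom{S}{j}}\|_{L^{2r}(X;H^{\binom{S}{j}})}$, since rainbow sets form a subcollection of $\binom{S}{j}$.

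The step I expect to be the main obstacle is (ii): essentially everything there lies in arranging the decoupling so that the Type II $2r$-superorthogonality hypothesis is verifiable \emph{verbatim} at every stage of the iteration. This is exactly what the random colouring buys — by turning each summation index into a colour-separated label it guarantees the implication ``nonce in one coordinate $\Rightarrow$ nonce for the whole family of sets''. Without it, naively extracting one element from each $A_j$ fails to produce a superorthogonal system, because that element can reappear as a non-distinguished element of the other sets in the family.
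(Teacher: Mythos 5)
Your argument is correct. Part (ii) is the paper's argument essentially verbatim: split $\binom{S}{\le k}$ by cardinality, randomly $j$-colour $S$, use $j^j/j!\le e^j$ to pass to rainbow sets, rewrite the rainbow sum as an iterated sum over $C_1\times\cdots\times C_j$, and peel off one coordinate at a time via part (i) with a growing Hilbert space; your explanation of why the intermediate partial sums remain Type II $2r$-superorthogonal (a value occurring exactly once among the colour-$\ell$ coordinates is automatically a nonce for the whole family of $j$-sets, since each rainbow set has exactly one colour-$\ell$ element) is precisely the verification the paper leaves as ``not difficult to show''. Part (i) is where you genuinely diverge. The paper argues by induction on $|S|$: it first establishes a pointwise vector inequality $\|u+v\|_H^{2r}\le 2r\,\mathrm{Re}\langle v,u\rangle\|u\|_H^{2r-2}+\sum_{j}C^{1_{j\ge1}}\binom{2r}{2j}\|v\|_H^{2j}\|u\|_H^{2r-2j}$ (absorbing the odd powers via Young's inequality), iterates it to peel off $f_1,\dots,f_n$ one at a time, integrates away the nonce terms, and closes with the combinatorial bound $\binom{2r}{2j_1,\dots,2j_n}\le (2r)^r\binom{r}{j_1,\dots,j_n}$. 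You instead expand $\|\sum_s f_s\|_H^{2r}$ globally, kill the nonce tuples by superorthogonality, reduce by Cauchy--Schwarz to the scalar inequality $\sum_{\text{no nonce}}\prod_{j=1}^{2r} g_{s_j}\le O(r)^r\sigma^{2r}$, and prove that by the generating function $P(z)=\prod_s(e^{g_sz}-g_sz)$: the left-hand side is $(2r)!\,[z^{2r}]P$, nonnegativity of the coefficients gives $[z^{2r}]P\le P(z_0)z_0^{-2r}$, and $e^w-w\le e^{w^2}$ together with $z_0=\sqrt r/\sigma$ finishes. Both routes give the same $O(\sqrt r)$; yours is closer to the classical exponential-moment proof of Khintchine and avoids the paper's even/odd bookkeeping, while the paper's route has the virtue of staying entirely combinatorial. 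Your side remark that a crude count of set partitions into blocks of size $\ge 2$ is too lossy is also correct (partitions of $[2r]$ into blocks of size $4$ already number on the order of $r^{3r/2}$), which is exactly why the resummation in $P$ is needed.
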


Part (i) is standard (see e.g., \cite[\S 3.1]{pierce}).  Part (ii) (without any losses of $\Log |S|$ factors) appears to new; with logarithmic losses one can obtain a result of this type from \cite[Lemma 5.1]{MST}, an iteration of (i), and the triangle inequality.  In more pedestrian notation, we may write
$$ \left\| (f_s)_{s \in S} \right \|_{L^{2r}(X; H^S)} = \left\| \left( \sum_{s \in S} \|f_s\|_H^2\right)^{1/2} \right \|_{L^{2r}(X)}$$
and similarly
$$ \left\| (f_A)_{A \in \binom{S}{\leq k}} \right \|_{L^{2r}(X; H^{[\leq k]})} = \left\| \left( \sum_{A \in \binom{S}{\leq k}} \|f_A\|_H^2\right)^{1/2} \right \|_{L^{2r}(X)}.$$

\begin{proof}  We begin with (i).  We may index $S = \{1,\dots,n\}$.  The desired estimate may be rewritten as
$$ \int_X \|\sum_{s=1}^n f_s\|_H^{2r}\ d\mu \leq O(r)^r \int_X (\sum_{s=1}^n \|f_s\|_H^2)^{r}\ d\mu.$$
From the binomial theorem, the Cauchy-Schwarz inequality, and the triangle inequality, for any $u,v \in H$ we have
$$ \|u+v\|_H^{2r} = \|u\|_H^{2r} + 2r \mathrm{Re} \langle v, u \rangle \|u\|_H^{2r-2} + O\left( \sum_{j=2}^{2r} \binom{2r}{j} \|v\|_H^j \|u\|_H^{2r-j} \right).$$
Observe for any odd $2j+1$ between $1$ and $2r$ that
$$ \binom{2r}{2j+1} \sim \binom{2r}{2j}^{1/2} \binom{2r}{2j+2}^{1/2}$$
(since $k! \sim ((k-1)! (k+1)!)^{1/2}$ for any $k \in \Z_+$), and hence by Young's inequality $ab \leq \frac{1}{2} a^2 + \frac{1}{2} b^2$ we have
$$
\binom{2r}{2j+1} \|v\|_H^{2j+1} \|u\|_H^{2r-2j-1} \lesssim
\binom{2r}{2j} \|v\|_H^{2j} \|u\|_H^{2r-2j} + \binom{2r}{2j+2} \|v\|_H^{2j+2} \|u\|_H^{2r-2j-2}.$$
As a consequence, we may restrict the $j$ summation here to even integers, thus
$$ \|u+v\|_H^{2r} = \|u\|_H^{2r} + 2r \mathrm{Re} \langle v, u \rangle \|u\|_H^{2r-2}  + O\left( \sum_{j=1}^{r} \binom{2r}{2j} \|v\|_H^{2j} \|u\|_H^{2r-2j} \right)$$
and in particular
$$ \|u+v\|_H^{2r} \leq 2r \mathrm{Re} \langle v, u \rangle \|u\|_H^{2r-2}  + \sum_{j=0}^{r} C^{1_{j \geq 1}} \binom{2r}{2j} \|v\|_H^{2j} \|u\|_H^{2r-2j}$$
for some absolute constant $C > 1$.  As a special case, for any $v_1,\dots,v_n \in H$, one has
$$ \left\|\sum_{s=1}^n v_s\right\|_H^{2r} \leq \mathrm{Re} Z + \sum_{j=0}^r C^{1_{j \geq 1}} \binom{2r}{2j} \|v_1\|_H^{2j} \left\|\sum_{s=2}^n v_s\right\|_H^{2r-2j}$$
where $Z$ is a linear combination of expressions of the form $\prod_{j=1}^r \langle v_{s_j}, v_{s_{r+j}} \rangle$ where $\{s_1\},\dots,\{s_{2r}\}$ contains a nonce.  Iterating this identity $n$ times, we conclude that
$$\left \|\sum_{s=1}^n v_s\right\|_H^{2r} \leq \mathrm{Re} Z' + \sum^* C^{1_{j_1 \geq 1}+\dots+1_{j_n \geq 1}} \binom{2r}{2j_1,\dots,2j_n} \|v_1\|_H^{2j_1}\dots \|v_n\|_H^{2j_n}$$
where $Z'$ is also a linear combination of expressions of the form $\prod_{j=1}^r \langle v_{s_j}, v_{s_{r+j}} \rangle$ with $\{s_1\},\dots,\{s_{2r}\}$ containing a nonce, and $\sum^*$ denotes a sum over tuples $(j_1,\dots,j_n) \in \N^n$ with $j_1+\dots+j_n=r$.  
Applying this with $v_i \coloneqq f_i(x)$, integrating in $X$, and using the Type II $2r$-superorthogonality hypothesis \eqref{2r-orthog} as well as the bound $C^{1_{j_1 \geq 1}+\dots+1_{j_n \geq 1}}  \leq C^r$, we conclude that
$$ \int_X \|\sum_{s=1}^n f_s\|_H^{2r}\ d\mu \leq \sum^* C^r \binom{2r}{2j_1,\dots,2j_n} \int_X \|f_1\|_H^{2j_1}\dots \|f_n\|_H^{2j_n}\ d\mu.$$
On the other hand, we have
$$ \int_X (\sum_{s=1}^n \|f_s\|_H^2)^{r} = \sum^* \binom{r}{j_1,\dots,j_n} \int_X \|f_1\|_H^{2j_1}\dots \|f_n\|_H^{2j_n}\ d\mu.$$
To finish the proof it will suffice to establish the inequality
$$ \binom{2r}{2j_1,\dots,2j_n} \leq (2r)^r  \binom{r}{j_1,\dots,j_n}$$
for any $j_1,\dots,j_n \geq 0$ summing to $r$.  But this follows from the combinatorial observation that given a partition of $\{1,\dots,2r\}$ into $n$ classes of cardinality $2j_1,\dots,2j_n$ respectively, one can remove $j_1$ elements from the first class, then $j_2$ elements from the second class, and so forth until one is left with a partition of $r$ elements of $\{1,\dots,2r\}$ into $n$ classes of cardinality $j_1,\dots,j_n$.  There are at most $(2r)^r$ ways to remove these elements in the order indicated, and $\binom{r}{j_1,\dots,j_n}$ ways to partition the remaining elements, with the original partition being recoverable from this data.  This gives (i).

Now we prove (ii).  By the triangle inequality we have
$$ 
\| \sum_{A \in \binom{S}{\leq k}} f_A \|_{L^{2r}(X;H)} \leq \sum_{k' \leq k} \| \sum_{A \in \binom{S}{k}} f_A \|_{L^{2r}(X;H)}$$
and also $\sum_{k' \leq k} O(r)^{k'/2} = O(r)^{k/2}$, so it suffices to establish the inequality
$$ 
\| \sum_{A \in \binom{S}{k}} f_A \|_{L^{2r}(X;H)} \leq O(r)^{k/2} \left\| (f_A)_{A \in \binom{S}{k}} \right \|_{L^{2r}(X; H^{\binom{S}{k}})}$$
for every $k \geq 0$.  The case $k=0$ is trivial, so suppose $k \geq 1$.  We apply the probablistic decoupling method (cf., \cite{pena}), which can be viewed as a substitute for the random partitioning lemma in \cite[Lemma 5.1]{MST} that avoids logarithmic losses.  We form a random partition $S = {\bf S}_1 \uplus \dots \uplus {\bf S}_k$ by setting ${\bf S}_i \coloneqq \{ s \in S: {\bf i}_s = i \}$, where ${\bf i}_s, s \in S$ are independent random variables drawn uniformly at random from $\{1,\dots,k\}$.  Observe that if $A \in \binom{S}{k}$, then $A$ takes the form $A = \{s_1,\dots,s_k\}$ with $s_i \in {\bf S}_i$ for $i=1,\dots,k$ with probability precisely $\frac{k!}{k^k}$ (this is the probability that the tuple $(i_s)_{s \in A}$ forms a permutation of $\{1,\dots,k\}$).  Thus we have
$$ \sum_{A \in \binom{S}{k}} f_A  = \frac{k^k}{k!} \E \sum_{s_1 \in {\bf S}_1,\dots,s_k \in {\bf S}_k} f_{\{s_1,\dots,s_k\}}$$
and hence by the triangle inequality
$$ 
\| \sum_{A \in \binom{S}{k}} f_A \|_{L^{2r}(X;H)} \leq \frac{k^k}{k!} \E \left\| \sum_{s_1 \in {\bf S}_1,\dots,s_k \in {\bf S}_k} f_{\{s_1,\dots,s_k\}} \right\|_{L^{2r}(X;H)}.$$
Using the Taylor expansion
$$ e^k = \frac{k^0}{0!} + \frac{k^1}{1!} + \dots + \frac{k^k}{k!} + \dots \geq \frac{k^k}{k!}$$
it will thus suffice to establish the deterministic inequality
$$
\left\| \sum_{s_1 \in S_1,\dots,s_k \in S_k} f_{\{s_1,\dots,s_k\}} \right\|_{L^{2r}(X;H)}
\leq O(r)^{k/2}
\left\| (f_{\{s_1,\dots,s_k\}})_{s_1 \in S_1,\dots,s_k \in S_k} \right\|_{L^{2r}(X; H^{S_1 \times \dots \times S_k})}$$
whenever $S = S_1 \uplus \dots \uplus S_k$ is a partition of $S$.  By induction, it suffices to establish the bound
\begin{align*}
&\left\| \left(\sum_{s_i \in S_i,\dots,s_k \in S_k} f_{\{s_1,\dots,s_k\}}\right)_{s_1 \in S_1,\dots,s_{i-1} \in S_{i-1}} \right\|_{L^{2r}(X;H^{S_1 \times \dots \times S_{i-1}})}\\
&\quad \leq O(r)^{1/2}
\left\| \left(\sum_{s_{i+1} \in S_{i+1},\dots,s_k \in S_k} f_{\{s_1,\dots,s_k\}}\right)_{s_1 \in S_1,\dots,s_i \in S_i} \right\|_{L^{2r}(X;H^{S_1 \times \dots \times S_i})}
\end{align*}
for all $1 \leq i \leq k$.  But this follows by applying part (i) to the Hilbert space $H^{S_1 \times \dots \times S_{i-1}}$ and the functions
$$ \vec f_{s_i} \coloneqq \left( \sum_{s_{i+1} \in S_{i+1},\dots,s_k \in S_k} f_{\{s_1,\dots,s_k\}} \right)_{s_1 \in S_1,\dots,s_{i-1} \in S_{i-1}}$$
for $s_i \in S_i$, as it is not difficult to show that these functions form a $2r$-superorthogonal system in $L^{2r}(X; H^{S_1 \times \dots \times S_{i-1}})$.
\end{proof}

As a sample application of Theorem \ref{sys-ortho}, we can specialize to the situation in Example \ref{poly-rand} to conclude

\begin{corollary}[Hoeffding-type inequality]\label{hoeff}  Let the notation and hypotheses be as in Example \ref{poly-rand}. If we have the bound
\begin{equation}\label{as2}
 \sum_{A \in \binom{S}{\leq k}: A \neq \emptyset} |{\bf f}_A|^2 \leq \sigma^2
\end{equation}
almost surely for some $\sigma>0$, then one has
 \begin{equation}\label{stim}
\P\left( \left| \sum_{A \in \binom{S}{\leq k}} {\bf f}_A - {\bf f}_\emptyset\right| \geq \lambda \sigma \right) \leq O(1)^k \left(\exp(-ck \lambda^{2/k}) + \exp( -c R)\right)
\end{equation}
for all $\lambda > 0$ and some absolute constant $c>0$.
\end{corollary}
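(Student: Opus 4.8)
The plan is to run the moment method, treating Corollary~\ref{hoeff} as an essentially immediate consequence of Theorem~\ref{sys-ortho}(ii). First I would observe that
$$ \sum_{A \in \binom{S}{\leq k}} {\bf f}_A - {\bf f}_\emptyset = \sum_{A \in \binom{S}{\leq k}: A \neq \emptyset} {\bf f}_A, $$
and that replacing ${\bf f}_\emptyset$ by the zero random variable (still of the form permitted in Example~\ref{poly-rand}, taking $J_\emptyset$ empty) leaves this sum unchanged; so I may assume ${\bf f}_\emptyset = 0$ and apply Example~\ref{poly-rand} to the full family $({\bf f}_A)_{A \in \binom{S}{\leq k}}$, which is thus a Type II $2r$-superorthogonal hypersystem for every integer $r$ with $1 \leq r \leq R/2k$. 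Applying Theorem~\ref{sys-ortho}(ii) with $H = \C$, followed by the almost sure bound~\eqref{as2}, gives for each such $r$
$$ \Big\| \sum_{A \neq \emptyset} {\bf f}_A \Big\|_{L^{2r}} \leq O(r)^{k/2} \Big\| \Big( \sum_{A \neq \emptyset} |{\bf f}_A|^2 \Big)^{1/2} \Big\|_{L^{2r}} \leq O(r)^{k/2} \sigma, $$
the norms being taken over the ambient probability space, and hence $\E \big| \sum_{A \neq \emptyset} {\bf f}_A \big|^{2r} \leq (C_0 r)^{rk} \sigma^{2r}$ for some absolute constant $C_0 \geq 1$. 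Markov's inequality then gives, for every integer $1 \leq r \leq R/2k$ and every $\lambda > 0$,
$$ \P\Big( \big| \sum_{A \neq \emptyset} {\bf f}_A \big| \geq \lambda \sigma \Big) \leq \Big( \frac{C_0 r}{\lambda^{2/k}} \Big)^{rk}. $$

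Next I would optimize in $r$. Ignoring the constraints, the right-hand side is smallest near $r \approx \lambda^{2/k}/(eC_0)$, where it equals $\exp(-\Theta(k\lambda^{2/k}))$; but $r$ must be a positive integer that is also at most $R/2k$. Accordingly I would take $r$ to be the largest integer not exceeding $\min( R/2k,\ \lambda^{2/k}/(eC_0) )$. If $r \geq 1$ then $C_0 r/\lambda^{2/k} \leq 1/e$, so the tail is at most $e^{-rk}$; using $\lfloor t \rfloor \geq t/2$ for $t \geq 1$ one gets $rk \geq c' \min( R,\ k\lambda^{2/k} )$ for an absolute $c' > 0$, and hence the tail is at most $\exp(-c'R) + \exp(-c'k\lambda^{2/k})$. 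If instead $r = 0$ --- which happens exactly when $R < 2k$ or $\lambda^{2/k} < eC_0$ --- I would use only the trivial bound $\P(\cdots) \leq 1$ and note that in either case one of $\exp(-c'R)$, $\exp(-c'k\lambda^{2/k})$ is at least $\exp(-O(k))$, so that $1 \leq O(1)^k\big( \exp(-c'R) + \exp(-c'k\lambda^{2/k}) \big)$ for a suitable absolute choice of implied constant. Combining the two cases yields~\eqref{stim} with $c = c'$.

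I do not anticipate a genuine obstacle: the estimate is a direct application of the superorthogonal Khintchine inequality for hypersystems together with the pointwise hypothesis~\eqref{as2}, so essentially all of the content is already in Theorem~\ref{sys-ortho}(ii). The only part that needs care is the bookkeeping in the final optimization --- tracking the integrality of $r$, the independence ceiling $r \leq R/2k$ (which is precisely what produces the $\exp(-cR)$ term), and the small-$\lambda$ and small-$R$ regimes in which the moment bound is vacuous and the trivial bound $\P \leq 1$ must be reconciled with the claimed exponential form. All implied constants occurring here are absolute, matching the $O(1)^k$ factor and the absolute $c$ in~\eqref{stim}.
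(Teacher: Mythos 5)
Your proof is correct and takes essentially the same approach as the paper: normalize ${\bf f}_\emptyset=0$, apply Theorem~\ref{sys-ortho}(ii) to the superorthogonal hypersystem from Example~\ref{poly-rand}, use Markov's inequality on the $2r$-th moment, and then optimize over integers $r \leq R/2k$ at scale $r \asymp \lambda^{2/k}$. The only cosmetic difference is the bookkeeping order — the paper first shrinks $\lambda$ to the regime $\lambda \leq (R/k)^{k/2}$ and discards trivial cases $\lambda < C^k$, $R < Ck$ before setting $r = \lfloor c\lambda^{2/k}\rfloor$, whereas you fold the cap $R/2k$ directly into the choice of $r$ and handle the $r=0$ case with the trivial bound — but the content is identical.
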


See \cite{bp}, \cite{bbb} for some previous Hoeffding-type inequalities for sums of $R$-wise independent random variables.

\begin{proof}  We may normalize ${\bf f}_\emptyset=0$.  By shrinking $\lambda$ if necessary we can also assume that $\lambda \leq (R/k)^{k/2}$ (otherwise the first term on the right-hand side is dominated by the second).  We can also assume that $\lambda \geq C^k$ and $R \geq Ck$ for a large constant $C$, as the bound is trivial otherwise.
Let $1 \leq r \leq R/2k$ be an integer to be chosen later.  By Markov's inequality one has
$$\P\left( \left| \sum_{A \in \binom{S}{\leq k}} {\bf f}_A \right| \geq \lambda \sigma \right) \leq \lambda^{-2r} \sigma^{-2r}
\E \left| \sum_{A \in \binom{S}{\leq k}} {\bf f}_A \right|^{2r}.$$
Applying Theorem \ref{sys-ortho}(ii) to the $2r$-superorthogonal hypersystem $({\bf f}_A)_{A \in \binom{S}{\leq k}}$, we obtain
$$ \E \left| \sum_{A \in \binom{S}{\leq k}} {\bf f}_A \right|^{2r} \leq O(r)^{kr} \E \left(\sum_{A \in \binom{S}{\leq k}} | {\bf f}_A |^2\right)^{r}.$$
Combining this with the preceding inequality and \eqref{as2}, we conclude that
$$\P\left( \left| \sum_{A \in \binom{S}{\leq k}} {\bf f}_A \right| \geq \lambda \sigma \right) \leq (O(r) / \lambda^{2/k})^{kr}.$$

If we set $r \coloneqq \lfloor c \lambda^{2/k} \rfloor$ for a sufficiently small absolute constant $c>0$, we obtain the claim.
\end{proof}

We now discuss the sharpness of the estimates in Corollary \ref{hoeff}.  The first example below shows that the first term on the right-hand side of \eqref{stim} is reasonably sharp; the second example shows the second term in \eqref{stim} only has a small amount of room for improvement.

\begin{example}  Let $n,k$ be positive integers, and let ${\bf X}_{i,j}$ for $i=1,\dots,k$ and $j=1,\dots,n$ be independent Bernoulli random variables taking values in $\{-1,+1\}$ of mean zero.  Then the random variable $\prod_{i=1}^k \sum_{j=1}^n {\bf X}_{i,j}$ can be expanded in the form $\sum_A {\bf f}_A$ where ${\bf f}_A = {\bf X}_{1,j_1} \dots {\bf X}_{k,j_k}$ when $A$ is of the form $\{(1,j_1),\dots,(k,j_k)\}$ and ${\bf f}_A=0$ otherwise.  One then easily verifies that \eqref{as2} holds with $\sigma=n^{k/2}$, and that
$$ \P\left( \left| \sum_{A \in \binom{S}{\leq k}} {\bf f}_A \right| \geq \lambda \sigma \right) = 2^{-nk}$$
when $\lambda = n^{k/2}$ and $S = \{1,\dots,k\} \times \{1,\dots,n\}$.  Here one can take $R$ to be arbitrary. This shows that the first-term on the right-hand side of \eqref{stim} cannot be improved except possibly for the $O(1)^k$ factor or in the explicit value of $c$. Modifications of this example can also be used to illustrate the sharpness of Theorems \ref{sys-ortho}; we leave the details to the interested reader.
\end{example}

\begin{example}  Let $R$ be a natural number, let $p$ be a prime greater than $R$, and let ${\bf P}$ be a random polynomial of degree at most $R-1$ with coefficients in $\Z/p\Z$, drawn uniformly among all such polynomials.  Then the random variables ${\bf P}(i)$ for $i=1,\dots,p$ are $R$-wise independent, since the Lagrange interpolation formula shows that for any distinct $i_1,\dots,i_R$, the map from polynomials ${\bf P}$ to evaluations $({\bf P}(i_1),\dots,{\bf P}(i_{R}))$ is a bijection.  We then have
$$ \P\left( \left|\sum_{i=1}^p (1_{{\bf P}(i)=0} - 1/p)\right| = p-1 \right) = p^{-R}$$
Comparing this with \eqref{stim} with $\sigma^2 = p$, $k=1$, and $\lambda = \frac{p-1}{\sqrt{p}}$, and taking $p$ comparable to $C R \log R$, we see that the $\exp(-cR)$ term in \eqref{stim} cannot be improved to more than $\exp(-C R \log R)$ for some constant $C$. One can construct similar examples for higher values of $k$ by considering the random variable $\prod_{j=1}^k \sum_{i=1}^p (1_{{\bf P}_j(i)=0} - 1/p)$ where ${\bf P}_1,\dots,{\bf P}_k$ are independent copies of ${\bf P}$; we leave the details to the interested reader.
\end{example}

As another application of Theorem \ref{sys-ortho} we give\footnote{We thank Nikolay Tzvetkov for this suggestion.} a (slightly weaker form) of a standard Wiener chaos estimate.

\begin{corollary}[Wiener chaos estimate]  Let $S$ be a finite set, and let $\mathbf{g}_s, s \in S$ be independent real gaussian variables of mean zero and variance one.  Let $k \in \N$, and for each $A \in \binom{S}{\leq k}$ let $c_A$ be a an element of a finite-dimensional Hilbert space $H$.  Then for any $2 \leq p < \infty$, one has
$$ \E \left\| \sum_{A \in \binom{S}{\leq k}} c_A \prod_{s \in A} \mathbf{g}_s \right\|_H^p
\leq O(p)^{kp/2} (\sum_{A \in \binom{S}{\leq k}} \|c_A\|_H^2)^{p/2}.$$
\end{corollary}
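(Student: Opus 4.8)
The plan is to reduce the Wiener chaos estimate to the superorthogonal Khintchine inequality for hypersystems, Theorem~\ref{sys-ortho}(ii). First I would dispose of the case where $p=2r$ is an even integer, since then the statement follows almost immediately: the collection $({\bf f}_A)_{A \in \binom{S}{\leq k}}$ with ${\bf f}_A \coloneqq c_A \prod_{s \in A} {\bf g}_s$ is a Type~II $2r$-superorthogonal hypersystem (this is the special case of Example~\ref{poly-rand} with scalar Gaussian variables ${\bf X}_s = {\bf g}_s$ and $R = \infty$, so every $r$ is permissible), and Theorem~\ref{sys-ortho}(ii) yields
$$ \E \Bigl\| \sum_{A} {\bf f}_A \Bigr\|_H^{2r} \leq O(r)^{kr} \, \E \Bigl( \sum_{A} \|{\bf f}_A\|_H^2 \Bigr)^{r}. $$
Since $\|{\bf f}_A\|_H^2 = \|c_A\|_H^2 \prod_{s \in A} {\bf g}_s^2$ and the ${\bf g}_s^2$ have mean one, expanding the $r$-th power and taking expectations gives a sum of nonnegative terms each bounded by a moment of a product of at most $kr$ of the ${\bf g}_s^2$; comparing with the deterministic quantity $\bigl(\sum_A \|c_A\|_H^2\bigr)^r$ one loses only a further $O(1)^{kr}$ factor (from the bounded moments $\E {\bf g}^{2j} = O(j)^j$ absorbed into the $O(r)^{kr}$), so one obtains $\E \|\sum_A {\bf f}_A\|_H^{2r} \leq O(r)^{kr} \bigl(\sum_A \|c_A\|_H^2\bigr)^r$, which is the claim with $p = 2r$ after noting $O(r)^{kr} = O(p)^{kp/2}$.

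To pass from even integers to general $2 \leq p < \infty$ I would use monotonicity of $L^p$ norms on a probability space: given $p$, pick the even integer $2r$ with $2r \geq p$ and $2r \leq p+2$, so that $\E \|\cdot\|_H^p \leq (\E \|\cdot\|_H^{2r})^{p/2r}$. Applying the even-integer case and then raising to the power $p/2r$ converts the bound $O(r)^{kr}\bigl(\sum_A \|c_A\|_H^2\bigr)^r$ into $O(r)^{kp/2}\bigl(\sum_A \|c_A\|_H^2\bigr)^{p/2}$; since $r \leq p/2 + 1 = O(p)$ this is $O(p)^{kp/2}\bigl(\sum_A \|c_A\|_H^2\bigr)^{p/2}$, as desired. (This is precisely why the corollary is stated as a ``slightly weaker form'' — the implied constant in $O(p)^{kp/2}$ is not sharp, in contrast to the even-integer case.)

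I do not expect any serious obstacle here; the only mild care needed is in the bookkeeping of the variance computation — verifying that expanding $\E\bigl(\sum_A \|c_A\|_H^2 \prod_{s\in A}{\bf g}_s^2\bigr)^r$ really is dominated by $O(1)^{kr}\bigl(\sum_A\|c_A\|_H^2\bigr)^r$. This follows because, upon expanding the $r$-th power, each resulting term is a product $\prod_{i=1}^r \|c_{A_i}\|_H^2$ times $\E \prod_{s} {\bf g}_s^{2 e_s}$ where the exponents $e_s$ sum to at most $kr$ and each $e_s \leq r$; by independence this Gaussian expectation factors as $\prod_s \E {\bf g}^{2e_s} = \prod_s O(e_s)^{e_s} \leq O(r)^{kr}$, and summing over the decomposition reconstitutes $\bigl(\sum_A \|c_A\|_H^2\bigr)^r$ with that uniform factor pulled out. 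Absorbing this into the $O(r)^{kr}$ from Theorem~\ref{sys-ortho}(ii) completes the argument.
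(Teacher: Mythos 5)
There is a genuine quantitative gap. Your key step is to apply Theorem~\ref{sys-ortho}(ii) directly to the Gaussian hypersystem ${\bf f}_A = c_A \prod_{s\in A}\mathbf{g}_s$ and then estimate the resulting square function. Theorem~\ref{sys-ortho}(ii) gives
$$\E\Bigl\|\sum_A {\bf f}_A\Bigr\|_H^{2r} \leq O(r)^{kr}\,\E\Bigl(\sum_A \|c_A\|_H^2\prod_{s\in A}\mathbf{g}_s^2\Bigr)^r,$$
and, as you correctly compute, expanding the right-hand side produces Gaussian moments $\prod_s \E\mathbf{g}^{2e_s}$ with $\sum_s e_s \leq kr$, $e_s\leq r$, which are bounded by $O(r)^{kr}$. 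But this factor does \emph{not} get ``absorbed into'' the $O(r)^{kr}$ already present from Theorem~\ref{sys-ortho}(ii) — it multiplies it, yielding a total of $O(r)^{2kr} = O(p)^{kp}$, which is worse than the stated $O(p)^{kp/2}$ by a factor of $O(p)^{kp/2}$. (The two $O(r)^{kr}$ factors enter as a product, not a maximum, so the exponent doubles.) The paper flags exactly this pitfall: a direct application of Theorem~\ref{sys-ortho} ``loses an additional factor of $O(p)^{kp/2}$ due to the unboundedness of the gaussian random variables.''

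The paper's remedy, which your proposal misses, is the central limit theorem trick: replace each $\mathbf{g}_s$ by $N^{-1/2}\sum_{i=1}^N\bm{\epsilon}_{s,i}$ with independent signs $\bm{\epsilon}_{s,i}\in\{-1,+1\}$, view $c_A\prod_{s\in A}\bm{\epsilon}_{s,i_s}$ as a Type~II $2r$-superorthogonal hypersystem indexed by $S\times\{1,\dots,N\}$, apply Theorem~\ref{sys-ortho}(ii), and observe that since $|\bm{\epsilon}_{s,i_s}|=1$ the square function collapses \emph{exactly} to $\bigl(\sum_A\|c_A\|_H^2\bigr)^r$ with no loss; then let $N\to\infty$ via Fatou's lemma. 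The point is that boundedness of the underlying random variables is what makes the square-function side lossless, and passing to a Bernoulli approximation is precisely how one buys that boundedness while keeping the Gaussian limit. Your reduction to even $p$ and the final monotonicity/interpolation step are fine; it is the core moment computation that needs the detour through bounded random variables.
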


Using hypercontractivity inequalities, one can show that the factor $O(p)^{kp/2}$ can be improved to $(p-1)^{kp/2}$; see for instance \cite[Theorem I.22]{simon}. Thus we see that Khintchine type inequalities can be used as a partial substitute for hypercontractivity inequalities in some settings.

\begin{proof}  By interpolation it suffices to establish this bound when $p=2r$ is an even integer.  A direct application of Theorem \ref{sys-ortho} loses an additional factor of $O(p)^{kp/2}$ due to the unboundedness of the gaussian random variables $\mathbf{g}_s$.  To avoid this loss we exploit the central limit theorem.  Let $N$ be a large number, and for $s \in S$ and $i=1,\dots,N$ let $\bm{\epsilon}_{s,i}$ be independent Bernoulli variables taking values in $\{-1,1\}$ with probability $1/2$ of each.  By the central limit theorem, $(\frac{1}{\sqrt{N}} \sum_{i=1}^N \bm{\epsilon}_{s,i})_{s \in S}$ converges in distribution to $(\mathbf{g}_s)_{s \in S}$ as $N \to \infty$.  Thus by Fatou's lemma, it suffices to show that
$$ \E \left\| \sum_{A \in \binom{S}{\leq k}} c_A \prod_{s \in A} (\frac{1}{\sqrt{N}} \sum_{i=1}^N \bm{\epsilon}_{s,i}) \right\|_H^{2r}
\leq O(r)^{kr} (\sum_{A \in \binom{S}{\leq k}} \|c_A\|_H^2)^r$$
uniformly in $N$.  The left-hand side can be expanded as
$$ \E \left\| \sum_{A \in \binom{S}{\leq k}} \sum_{(i_s)_{s \in A} \in \{1,\dots,N\}^A} N^{-|A|/2} c_A \prod_{s \in A} \bm{\epsilon}_{s,i_s} \right\|_H^{2r}.$$
By Example \ref{poly-rand}, the $c_A \sum_{s \in A} \bm{\epsilon}_{s,i_s}$ form a Type II $2r$-orthogonal hypersystem indexed by $S \times \{1,\dots,N\}$, so by Theorem \ref{sys-ortho}(ii) we can bound the above expression by
$$ O(r)^{kr}
\E\left( \sum_{A \in \binom{S}{\leq k}} \sum_{(i_s)_{s \in A} \in \{1,\dots,N\}^A} N^{-|A|} \left\|c_A \prod_{s \in A} \bm{\epsilon}_{s,i_s} \right\|_H^2\right)^r.$$
But as the $\bm{\epsilon}_{s,i_s}$ have magnitude $1$, this simplifies to
$$ O(r)^{kr} \left(\sum_{A \in \binom{S}{\leq k}} \|c_A\|_H^2\right)^r$$
as desired.
\end{proof}

\section{Sunflower bound}\label{sunflower-sec}

If $k,r \in \Z_+$, let $\Sun(k,r)$ denote the smallest natural number with the property that any family of $\Sun(k,r)$ distinct sets of cardinality at most $k$ contains $r$ distinct elements $A_1,\dots,A_r$ that form a sunflower (as defined in Definition \ref{nonceflower}).  The celebrated \emph{Erd\H{o}s-Rado theorem} \cite{erdos} asserts that $\Sun(k,r)$ is finite; in fact Erd\H{o}s and Rado gave the bounds
$$ (r-1)^k \leq \Sun(k,r) \leq (r-1)^k k! + 1.$$
The \emph{sunflower conjecture} asserts in fact that the upper bound can be improved to $\Sun(k,r) \leq O(1)^k r^k$.  This remains open at present; the best bound known currently (in the regime where $k,r$ are both large) is
\begin{equation}\label{rao}
\Sun(k,r) \leq O( r \Log(k) )^k
\end{equation}
for all $k,r \in \Z_+$, due to Bell, Chueluecha, and Warnke \cite{bcw}, who modified an argument of Rao \cite{rao} (which in turn built upon a recent breakthrough of Alweiss, Lovett, Wu, and Zhang \cite{alwz}).

We can give a probabilistic version of the Erd\H{o}s-Rado theorem:

\begin{lemma}[Probabilistic Erd\H{o}s-Rado theorem]\label{per}  Let $k,r \in \Z_+$, let $S$ be a finite set, and let ${\bf A}$ be a random subset of $S$ of cardinality $k$ (i.e., a random element of $\binom{S}{k}$).  (We do not require the distribution of ${\bf A}$ to be uniform.) Let ${\bf A}_1,\dots,{\bf A}_r$ be $r$ independent copies of ${\bf A}$.  Then with probability at least $ (4 \Sun(k,r))^{-r}$, ${\bf A}_1,\dots,{\bf A}_r$ form a sunflower.
\end{lemma}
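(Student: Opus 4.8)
The plan is to reduce the probabilistic statement to the deterministic Erd\H{o}s--Rado theorem by a pigeonhole / conditioning argument. Write $N \coloneqq \Sun(k,r)$ and let ${\bf A}_1, \dots, {\bf A}_N$ be $N$ independent copies of ${\bf A}$ (not just $r$ of them); by the defining property of $\Sun(k,r)$, whenever these $N$ sets happen to be \emph{distinct}, some $r$-element subfamily forms a sunflower. The difficulty is that (a) the copies need not be distinct, and (b) even when a sunflower exists among the first $N$ copies, it sits in a random location, so we need to extract a fixed-location sunflower among only the first $r$ copies with a quantitative lower bound on the probability.

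First I would dispose of the non-distinctness issue. Since ${\bf A}$ takes values in $\binom{S}{k}$, two independent copies coincide with probability $\sum_{A}\P({\bf A}=A)^2 \le \max_A \P({\bf A}=A)$. If this maximum is large, say $\P({\bf A}=A_*) \ge 1/(2N)$ for some fixed $A_*$, then the constant sequence $A_1=\dots=A_r=A_*$ is trivially a sunflower (core $A_*$, empty petals) and occurs with probability at least $(2N)^{-r} \ge (4N)^{-r}$, so we are done in that case. Hence we may assume $\P({\bf A}=A) < 1/(2N)$ for every $A$; then the probability that ${\bf A}_1,\dots,{\bf A}_N$ are \emph{not} all distinct is at most $\binom{N}{2}\max_A \P({\bf A}=A) < \frac{N^2}{2}\cdot\frac{1}{2N} = N/4$... this is too weak, so instead I would choose the number of copies more carefully — take $M \coloneqq \lceil 4N^2 \rceil$ copies (still a function of $k,r$ only), so that the probability that ${\bf A}_1,\dots,{\bf A}_M$ fail to contain $N$ distinct values is small, say at most $1/2$; on the complementary event of probability at least $1/2$, the first $M$ copies contain an $r$-term sunflower at some (random) set of indices $1 \le i_1 < \dots < i_r \le M$.

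Now comes the extraction step, which I expect to be the main obstacle: converting "a sunflower exists at random indices among $M$ copies" into "a sunflower exists at the fixed indices $1,\dots,r$" with an explicitly good probability. The key point is exchangeability: the sequence $({\bf A}_1, \dots, {\bf A}_M)$ is i.i.d., hence exchangeable, so for any fixed tuple of indices $i_1 < \dots < i_r$ the event "${\bf A}_{i_1}, \dots, {\bf A}_{i_r}$ form a sunflower" has the same probability as "${\bf A}_1, \dots, {\bf A}_r$ form a sunflower" — this is exactly the quantity we want to bound below. Let $p$ denote that common probability. By the union bound over the $\binom{M}{r} \le M^r$ choices of index tuple, the probability that \emph{some} $r$-subfamily among the first $M$ forms a sunflower is at most $\binom{M}{r} p \le M^r p$. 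Combining with the lower bound $1/2$ from the previous paragraph gives $M^r p \ge 1/2$, i.e. $p \ge \frac{1}{2} M^{-r} = \frac{1}{2}\lceil 4N^2\rceil^{-r}$. This is of the shape $(CN^2)^{-r}$ rather than the claimed $(4N)^{-r}$, so I would need to tighten the distinctness argument: rather than $4N^2$ copies, observe that one only needs the first $M$ copies to contain $N$ distinct values with probability $\ge 1/2$, and a more careful second-moment or greedy argument (peeling off one new value at a time, each succeeding with probability $\ge 1 - N/(2N) = 1/2$ under the assumption $\P({\bf A}=A)<1/(2N)$... ) should let one take $M = 2N$, giving $p \ge \frac{1}{2}(2N)^{-r} \cdot (\text{index-count correction})$; squaring out the constants yields the stated bound $(4\Sun(k,r))^{-r}$ after absorbing the $\binom{2N}{r} \le (2N)^r$ and the factor $1/2$ into the base $4N$. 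The cleanest route is probably: with $M=2N$ copies, $\P(\text{at least }N\text{ distinct values}) \ge 1/2$ by the greedy bound, so $\P(\exists\ r\text{-subfamily sunflower among first }2N) \ge 1/2$; union-bounding over $\binom{2N}{r}$ tuples and using exchangeability, $p \ge \frac{1}{2}\binom{2N}{r}^{-1} \ge \frac{1}{2}(2N)^{-r} \ge (4N)^{-r}$, which is exactly the claim. I would double-check the greedy distinctness estimate is the delicate point, since it is where the hypothesis "$\P({\bf A}=A)$ not too concentrated" is consumed and where the constant $4$ (rather than $2$) buys the necessary slack.
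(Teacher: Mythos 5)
Your proposal follows the same overall strategy as the paper's proof: a two-case split on whether some atom $A$ carries probability mass at least a fixed threshold, and in the non-concentrated case, drawing $M=2\Sun(k,r)$ independent copies, arguing they contain a sunflower with probability at least $1/2$, and then converting this to a bound on $p=\P({\bf A}_1,\dots,{\bf A}_r \text{ sunflower})$ via exchangeability and the union bound over $\binom{2\Sun(k,r)}{r}\le (2\Sun(k,r))^r$ index tuples. The final arithmetic $p \ge \tfrac{1}{2}(2\Sun(k,r))^{-r} \ge (4\Sun(k,r))^{-r}$ is exactly the paper's.

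Where you genuinely diverge is the key intermediate estimate that $2N$ samples (with $N=\Sun(k,r)$) contain at least $N$ distinct values with probability $\ge 1/2$. The paper consumes the hypothesis $\max_A \P({\bf A}=A) < (4N)^{-1}$ and runs a combinatorial argument: given that only $m\le N$ distinct values occur among the $2N$ samples, it counts the maximal distinct index tuples (bounded by $a_1\cdots a_m \le e^{2N/e}$ via AM--GM), then union-bounds, estimating the probability that each of the remaining samples repeats an earlier value by $1/4$, and concludes $\P(E) \le e^{2N/e} 4^{-N} \le 1/2$. You instead use the weaker hypothesis $\max_A \P({\bf A}=A) < (2N)^{-1}$ and a greedy/martingale argument: conditionally on having seen $j<N$ distinct values, each new draw is fresh with probability $> 1 - j/(2N) > 1/2$, so the count of distinct values (stopped at $N$) stochastically dominates a $\mathrm{Binomial}(2N,1/2)$ variable, whose median is $N$; hence $\P(\ge N \text{ distinct}) \ge 1/2$. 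Your route is arguably cleaner and consumes a weaker anti-concentration hypothesis, which is why your Case 1 threshold $1/(2N)$ still works. The one thing to flag: you leave the stochastic-domination step as a sketch (``I would double-check the greedy distinctness estimate''); it is correct, but to write it out one should formalize the stopping time at $N$ distinct values and the coupling with i.i.d. fair coins that gives the domination of the binomial --- a short but nontrivial verification. Also, like the paper, you should dispose of the $r=1$ case separately so that $\Sun(k,r)\ge 2$, but this is a triviality.
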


\begin{proof}  The $r=1$ case is trivial, so we may assume $r>1$, in particular $\Sun(k,r) \geq r \geq 2$.
If there is a set $A \in \binom{S}{k}$ with $\P( {\bf A} = A ) \geq (4\Sun(k,r))^{-1}$, then with probability at least $(4\Sun(k,r))^{-r}$ we have ${\bf A}_1=\dots={\bf A}_r=A$.  Since $A,\dots,A$ is a sunflower, this gives the claim.

Now suppose that $\P( {\bf A} = A ) < (4\Sun(k,r))^{-1}$ for all $A \in \binom{S}{k}$.  We form $2\Sun(k,r)$ independent samples ${\bf A}_1,\dots,{\bf A}_{2\Sun(k,r)}$ of ${\bf A}$.  Consider the event $E$ that these samples only consist of at most $\Sun(k,r)$ distinct sets.  If this event occurs, then there are $m$ distinct sets amongst the samples, each of them occurring with some multiplicities $a_1,\dots,a_m$ summing to $2\Sun(k,r)$.  The number of ways to create a maximal collection ${\bf A}_{i_1},\dots,{\bf A}_{i_m}$ of distinct samples is then $a_1 \dots a_m$, which by the arithmetic mean-geometric mean inequality is bounded by $(2\Sun(k,r)/m)^m$, which is in turn bounded by $e^{2\Sun(k,r)/e}$ using the standard inequality $x^{1/x} \leq e^{1/e}$ for $x \geq 1$ applied to $x = 2\Sun(k,r)/m$.
On the other hand, if we fix these indices $i_1,\dots,i_m$ for some $m \leq \Sun(k,r)$, we see from hypothesis that each of the other samples ${\bf A}_i$ has a probability at most $1/4$ of matching one of these distinct samples.  From the union bound, we conclude that
$$ \P(E) \leq e^{2\Sun(k,r)/e} (1/4)^{\Sun(k,r)} \leq \frac{1}{2}$$
since $\Sun(k,r) \geq 2$.  If we now condition to the complement of $E$, the samples ${\bf A}_1,\dots,{\bf A}_{2\Sun(k,r)}$ necessarily contain a sunflower, by definition of $\Sun(k,r)$.  Undoing the conditioning, we conclude that ${\bf A}_1,\dots,{\bf A}_{2\Sun(k,r)}$ contains a sunflower with probability at least $1/2$.  By symmetry, this means that ${\bf A}_1,\dots,{\bf A}_r$ is a sunflower with probability at least
$$ 2^{-1} \binom{2\Sun(k,r)}{r}^{-1} \geq 2^{-1} (2\Sun(k,r))^{-r},$$
giving the claim.
\end{proof}

In the converse direction, we can find a collection $A_1,\dots,A_{\Sun(k,r)-1}$ of distinct sets of cardinality $k$, such that no distinct $r$ elements in this collection form a sunflower.  If ${\bf A}_1,\dots,{\bf A}_r$ are drawn uniformly from this collection, then the probability that they form a sunflower is then precisely $(\Sun(k,r)-1)^{1-r}$ (the probability that all the ${\bf A}_i$ coincide).  Thus the bound of $(4 \Sun(k,r))^{-r}$ in the above lemma cannot be dramatically improved.

Lemma \ref{per} lets us control square functions:

\begin{corollary}[Sunflower bound on square function]\label{sunsquare} Let $S$ be a finite set, let $k \in \Z_+$, let $X$ be a measure space, and let $H$ be a finite-dimensional Hilbert space. Let $(f_A)_{A \in \binom{S}{k}}$ be a finite collection of functions $f_A \in L^{2r}(X;H)$.  Then
$$
\left\| (f_A)_{A \in \binom{S}{k}} \right\|_{L^{2r}(X; H^{[k]})}^{2r} 
\leq (4 \Sun(k,r))^{r} \sum_{A_0 \in \binom{S}{\leq k}} \sum^{**} \left\| \prod_{i=1}^r \|f_{A_0 \cup A_i}\|_H \right\|_{L^2(X)}^2$$
and conversely
$$
\sum_{A_0 \in \binom{S}{\leq k}} \sum^{**} \left\| \prod_{i=1}^r \|f_{A_0 \cup A_i}\|_H \right\|_{L^2(X)}^2 \leq \left\| (f_A)_{A \in \binom{S}{k}} \right\|_{L^{2r}(X; H^{[k]})}^{2r},
$$
where  $\sum^{**}$ denotes the sum over tuples $(A_1,\dots,A_r)$ of sets $A_1,\dots,A_r \in \binom{S \backslash A_0}{k-|A_0|}$ that are pairwise disjoint (or equivalently, that $A_0 \cup A_1,\dots,A_0 \cup A_r$ form a sunflower), and $H^{[k]} \coloneqq H^{\binom{S}{k}}$.
\end{corollary}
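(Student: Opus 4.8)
The plan is to reduce both inequalities to a single pointwise-in-$X$ estimate coming from expansion of the square-function norm, and then to feed the probabilistic Erd\H{o}s--Rado bound of Lemma~\ref{per} into that estimate. First I would unfold the definition of the norm and expand the $r$-fold power:
$$\left\| (f_A)_{A \in \binom{S}{k}} \right\|_{L^{2r}(X; H^{[k]})}^{2r} = \int_X \Bigl( \sum_{A \in \binom{S}{k}} \|f_A(x)\|_H^2 \Bigr)^r\, d\mu(x) = \sum_{(B_1,\dots,B_r) \in \binom{S}{k}^r} \Bigl\| \prod_{i=1}^r \|f_{B_i}\|_H \Bigr\|_{L^2(X)}^2 ,$$
using that $\int_X \prod_{i=1}^r \|f_{B_i}\|_H^2\, d\mu = \bigl\| \prod_{i=1}^r \|f_{B_i}\|_H \bigr\|_{L^2(X)}^2$ for each tuple and that the (finite) sum may be pulled outside the integral. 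Next I would record a purely combinatorial fact: for $r \geq 2$, the map $(A_0, A_1, \dots, A_r) \mapsto (A_0 \cup A_1, \dots, A_0 \cup A_r)$ is a bijection from the index set of $\sum_{A_0 \in \binom{S}{\leq k}} \sum^{**}$ onto the set of tuples $(B_1,\dots,B_r) \in \binom{S}{k}^r$ that form a sunflower; its inverse sends such a tuple to its core $A_0 := \bigcap_{i=1}^r B_i$ (the unique core of the sunflower, since $r \geq 2$) together with the petals $A_i := B_i \setminus A_0$, the sunflower hypothesis being precisely what makes the $A_i$ pairwise disjoint members of $\binom{S \setminus A_0}{k-|A_0|}$. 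Hence
$$\sum_{A_0 \in \binom{S}{\leq k}} \sum^{**} \Bigl\| \prod_{i=1}^r \|f_{A_0 \cup A_i}\|_H \Bigr\|_{L^2(X)}^2 = \sum_{(B_1,\dots,B_r)\text{ a sunflower}} \Bigl\| \prod_{i=1}^r \|f_{B_i}\|_H \Bigr\|_{L^2(X)}^2 .$$

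With this identity the converse (constant-free) inequality is immediate, since the sunflower tuples form a subset of $\binom{S}{k}^r$ and every summand is non-negative. For the forward inequality I would work pointwise in $x$: writing $\Sigma(x) := \sum_{A \in \binom{S}{k}} \|f_A(x)\|_H^2$, the desired pointwise bound is trivial where $\Sigma(x) = 0$, and where $\Sigma(x) > 0$ I would apply Lemma~\ref{per} to the random set $\mathbf{A} \in \binom{S}{k}$ with $\P(\mathbf{A} = A) := \|f_A(x)\|_H^2 / \Sigma(x)$ and $r$ independent copies $\mathbf{A}_1,\dots,\mathbf{A}_r$. Since
$$\P(\mathbf{A}_1,\dots,\mathbf{A}_r\text{ form a sunflower}) = \Sigma(x)^{-r} \sum_{(B_1,\dots,B_r)\text{ a sunflower}} \prod_{i=1}^r \|f_{B_i}(x)\|_H^2 \geq (4\Sun(k,r))^{-r}$$
and $\Sigma(x)^r = \sum_{(B_1,\dots,B_r) \in \binom{S}{k}^r} \prod_{i=1}^r \|f_{B_i}(x)\|_H^2$, rearranging gives the pointwise inequality $\sum_{(B_1,\dots,B_r) \in \binom{S}{k}^r} \prod_i \|f_{B_i}(x)\|_H^2 \leq (4\Sun(k,r))^r \sum_{(B_1,\dots,B_r)\text{ a sunflower}} \prod_i \|f_{B_i}(x)\|_H^2$. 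Integrating in $x$, pulling the finite sums outside, and invoking the two displayed identities above produces exactly the first claimed estimate.

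The analytic content here is slight — only the expansion of an $L^{2r}$ square-function norm together with Lemma~\ref{per} — so I expect the only point demanding real care to be the combinatorial bookkeeping: checking that for $r \geq 2$ the core of a sunflower $r$-tuple is canonically $\bigcap_i B_i$ (so that the sunflower constraint is exactly the pairwise-disjointness of the petals and nothing is over- or under-counted), whence $\sum_{A_0}\sum^{**}$ lists each sunflower tuple exactly once and the first displayed identity genuinely holds.
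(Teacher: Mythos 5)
Your proof follows the same path as the paper: reduce to a pointwise bound, view the normalized square function as a probability density on $\binom{S}{k}$, and invoke Lemma~\ref{per}; the preliminary expansion of the $L^{2r}$ norm is just a repackaging of working directly under the integral. The bijection you make explicit --- together with the observation that for $r\geq 2$ the core of a sunflower tuple is canonically $\bigcap_i B_i$ --- is precisely the combinatorics that the paper's terse step ``the inequality then can be written as $1\leq(4\Sun(k,r))^r\,\P(\hbox{sunflower})$'' tacitly relies on, and verifying uniqueness of the core is the right thing to do.

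The one loose end is $r=1$, which you correctly excluded from the bijection claim but then did not return to. For the forward inequality you only need the map $(A_0,A_1,\dots,A_r)\mapsto(A_0\cup A_1,\dots,A_0\cup A_r)$ to be \emph{surjective} onto sunflower tuples, and surjectivity holds for every $r\geq 1$, so that direction is fine at $r=1$ as well. The converse genuinely requires \emph{injectivity}, and at $r=1$ the map is $2^k$-to-$1$ (each $B\in\binom{S}{k}$ arises from every partition $B=A_0\sqcup A_1$), so the converse inequality as stated is off by a factor of $2^k$ when $r=1$, $k\geq 1$. This imprecision is present in the corollary itself --- the paper's one-line ``trivial bound $\P\leq 1$'' justification of the converse has the same tacit gap --- and is harmless since the converse is only ever needed, if at all, for $r\geq 2$; but it deserves a sentence in your write-up rather than a silent omission.
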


See \cite[Lemma 2.3]{IW} for a version of this result in the $k=1$ case.

\begin{proof}  We begin with the first inequality. Expanding out both sides, it suffices to establish the pointwise estimate
\begin{equation}\label{water}
 \left(\sum_{A \in \binom{S}{k}} \|f_A(x)\|_H^2\right)^r \leq
(4 \Sun(k,r))^{r} \sum_{A_0 \in \binom{S}{\leq k}} \sum^{**} \prod_{i=1}^r \|f_{A_0 \cup A_i}(x)\|_H^2
\end{equation}
for all $x$.

Fix $x$.  We may normalise the left-hand side of \eqref{water} to equal $1$.  We can then view the sequence $(\|f_A(x)\|_H^2)_{A \in \binom{S}{k}}$ as the probability density function for a random subset ${\bf A}$ of $S$ of cardinality $k$, and the inequality then can be written as
$$ 1 \leq (4 \Sun(k,r))^{r} \P( {\bf A}_1,\dots,{\bf A}_r \hbox{ form a sunflower} ).$$
The claim now follows from Lemma \ref{per}.  The second inequality similarly follows from the trivial bound
$$ \P( {\bf A}_1,\dots,{\bf A}_r \hbox{ form a sunflower} ) \leq 1.$$
\end{proof}

\section{Proof of main theorems}\label{proofs-sec}

Let $(d,k,S,\eps)$ be a major arc parameter set.  We now explore the additive structure of the major arcs associated to this set.  We first observe from the Chinese remainder theorem (and the hypothesis that the elements of $S$ are pairwise coprime) that
\begin{equation}\label{sigma}
\Sigma_{A_1} + \Sigma_{A_2} = \Sigma_{A_1 \uplus A_2}
\end{equation}
whenever $A_1,A_2 \subseteq S$ are disjoint. For $A_0 \in \binom{S}{\leq k}$, we also define the complementary set
$$ \Sigma_{(A_0)} \coloneqq \bigcup_{A \in \binom{S \backslash A_0}{\leq k-|A_0|}} \Sigma_A.$$
From \eqref{sigma} we then have the inclusion
\begin{equation}\label{sigma-include}
 \Sigma_{A_0} + \Sigma_{(A_0)} \subseteq \Sigma_{\leq k}.
\end{equation}

\begin{example} If $S = \{ q_1, q_2, q_3\}$ and $k=2$, then one has
\begin{align*}
 \Sigma_{(\emptyset)} &= \Sigma_{\leq 2} = \T^d[q_1 q_2] \cup \T^d[q_1 q_3] \cup \T^d[q_2 q_3], \\
 \Sigma_{(\{q_1\})} &= \Sigma_{\emptyset} \cup \Sigma_{\{q_2\}} \cup \Sigma_{\{q_3\}} = \T^d[q_2] \cup \T^d[q_3] \\
\Sigma_{(\{q_1,q_2\})} &= \Sigma_{\emptyset} = \{0\}.
\end{align*}
\end{example}

Let $H$ be a finite dimensional Hilbert space.  Define a \emph{major arc system} adapted to $(d,k,S,\eps)$ taking values in $H$ to be a collection $(f_\alpha)_{\alpha \in \Sigma_{\leq k}}$ of functions $f_\alpha \in \ell^2(\Z^d;H)$ with Fourier support in $\alpha + [-\eps,\eps]^d$ for each $\alpha \in \Sigma_{\leq k}$, thus
$$ f_\alpha \in \ell^2(\Z^d;H)^{\alpha + [-\eps,\eps]^d}.$$
For any $\Sigma \subseteq \Sigma_{\leq k}$, we define
$$ f_\Sigma \coloneqq \sum_{\alpha \in \Sigma} f_\alpha.$$

\begin{lemma}[Orthogonality properties]\label{ortho}  Let $(f_\alpha)_{\alpha \in \Sigma_{\leq k}}$ be a major arc system adapted to
a major arc parameter set $(d,k,S,\eps)$, taking values in a Hilbert space $H$. Suppose that the parameter set $(d,k,S,\eps)$ is $(r,c)$-good for some $r \in \Z_+$ and $0 < c < 1$.
\begin{itemize}
\item[(i)]  The major arcs $\alpha + [-\eps,\eps]^d$, $\alpha \in \Sigma_{\leq k}$ are disjoint. (Indeed, the $\alpha \in \Sigma_{\leq k}$ are at least $2\eps/c$-separated in the $\ell^\infty$ metric.)
\item[(ii)]  (Denominator orthogonality)  The hypersystem $(f_{\Sigma_A})_{A \in \binom{S}{\leq k}}$ is Type II $2r$-superorthogonal.
\item[(iii)]  (Numerator orthogonality)  If $A_1,\dots,A_r \in \binom{S}{\leq k}$ form a sunflower with core $A_0$ and petals $A_1 \backslash A_0,\dots,A_r \backslash A_0$, then the functions
$$ \prod_{i=1}^r f_{\Sigma_{A_0}+ \alpha_i} \in \ell^2(\Z^d; H^{\otimes r})$$
for $\alpha_1 \in \Sigma_{A_1 \backslash A_0}, \dots, \alpha_r \in \Sigma_{A_r \backslash A_0}$ are pairwise orthogonal in the Hilbert space $L^2(\T^d; H^{\otimes r})$, where we use the product notation
$$ \prod_{i=1}^r f_i(x) \coloneqq f_1(x) \otimes \dots \otimes f_r(x).$$
\end{itemize}
\end{lemma}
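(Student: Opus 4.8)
The plan is to prove the three parts in order, since each relies on successively more of the structure of the sets $\Sigma_A$. For part (i), the key observation is that $\Sigma_{\leq k} = \bigcup_{A \in \binom{S}{\leq k}} \T^d[Q_A]$, and every element of $\Sigma_{\leq k}$ is of the form $\frac{a}{q} \bmod \Z^d$ where $q = Q_A$ for some $A$ with $|A| \leq k$, hence $q \leq q_{\max}^k$. Two distinct such rationals $\frac{a}{q}, \frac{a'}{q'}$ (in a single coordinate) differ by at least $\frac{1}{qq'} \geq q_{\max}^{-2k}$; using $2rk \geq 2k$ and the $(r,c)$-goodness bound \eqref{small}, this is at least $\frac{2r}{c}\eps \geq \frac{2}{c}\eps > 2\eps$ (in fact $\geq 2\eps/c$), so the closed cubes $\alpha + [-\eps,\eps]^d$ attached to distinct $\alpha$ are disjoint. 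This also establishes the parenthetical separation claim.

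For part (ii), I would use the denominator structure together with part (i). Each $f_{\Sigma_A}$ has Fourier support in $\Sigma_A + [-\eps,\eps]^d$, and $\Sigma_A \subseteq \T^d[Q_A]$ consists of rationals whose denominators (after reduction) involve exactly the primes of $A$ — more precisely, any $\alpha \in \Sigma_A$ has order exactly $Q_A$ in the group $\T^d[Q_A]$ in each relevant sense, by the definition $\Sigma_A = \Sigma_{\subseteq A}\setminus\bigcup_{B\subsetneq A}\Sigma_{\subseteq B}$. The point is to show that the integral $\int_{\T^d}\prod_{j=1}^r \langle \F_{\Z^d}f_{\Sigma_{A_j}}, \F_{\Z^d}f_{\Sigma_{A_{r+j}}}\rangle_H$ (equivalently the $\ell^2$ pairing $\int_{\Z^d}\prod_{j=1}^r\langle f_{\Sigma_{A_j}}, f_{\Sigma_{A_{r+j}}}\rangle_H$, suitably interpreted as a convolution identity on the Fourier side) vanishes whenever $A_1,\dots,A_{2r}$ contains a nonce $s$. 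Expanding the product of pairings as a sum over tuples $(\alpha_1,\dots,\alpha_{2r})$ with $\alpha_\ell\in\Sigma_{A_\ell}$, the corresponding term contributes only if $\sum_{j=1}^r(\alpha_j - \alpha_{r+j}) \equiv 0$ modulo the translations by $[-\eps,\eps]^d$; by part (i) and smallness this forces $\sum_{j=1}^r \alpha_j = \sum_{j=1}^r \alpha_{r+j}$ exactly in $\T^d$. Now project to the $s$-component of the Chinese remainder decomposition $\T^d[Q_{A_\ell}] \cong \prod_{q \in A_\ell}\T^d[q]$: exactly one of the $A_\ell$, say $A_{\ell_0}$, contains $s$, so in that decomposition the $s$-coordinate of $\alpha_{\ell_0}$ ranges over $\T^d[s]\setminus\{0\}$ while all other $\alpha_\ell$ have trivial $s$-coordinate. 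Summing over this coordinate gives $\sum_{\beta \in (\T^d[s]\setminus\{0\})^d}\F_{\Z^d}f_{\cdots}$-type sums, and the nonce condition forces a cancellation. The cleanest way to see this is to fix all other coordinates and note that translating $\alpha_{\ell_0}$ by an element of $\T^d[s]^d$ is a symmetry of the constraint set that the phase is not invariant under unless the $s$-coordinate vanishes — but it cannot vanish — so the sum over that coordinate is zero. This is essentially the ``uniqueness property'' argument of Ionescu–Wainger.

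For part (iii), the argument is numerator orthogonality. Given a sunflower $A_1,\dots,A_r$ with core $A_0$ and disjoint petals $P_i := A_i\setminus A_0$, and given $\alpha_i, \alpha_i' \in \Sigma_{P_i}$, I want to show $\prod_i f_{\Sigma_{A_0}+\alpha_i}$ and $\prod_i f_{\Sigma_{A_0}+\alpha_i'}$ are orthogonal in $L^2(\T^d;H^{\otimes r})$ unless $(\alpha_i) = (\alpha_i')$. The Fourier support of $f_{\Sigma_{A_0}+\alpha_i}$ lies in $\Sigma_{A_0}+\alpha_i+[-\eps,\eps]^d$, so the $i$-th tensor factor of the first product has Fourier support (as a function on $\T^d$, via convolution) spread out over $\Sigma_{A_0}+\alpha_i+[-\eps,\eps]^d$ — but the relevant point for orthogonality of the $r$-fold tensor product is that the $L^2(\T^d;H^{\otimes r})$ inner product is $\int_{(\T^d)^r}$ of the product of Fourier transforms evaluated at the diagonal... actually more carefully: $\langle \prod_i g_i, \prod_i h_i\rangle_{L^2(\T^d;H^{\otimes r})} = \int_{\T^d}\prod_i\langle g_i(x),h_i(x)\rangle_H\,dx$, and by Plancherel this is $\int_{(\T^d)^{?}}$... the honest computation: it equals $\int$ over frequency tuples summing appropriately. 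The vanishing will follow because, using \eqref{sigma} and the coprimality/CRT structure, the sets $\Sigma_{A_0}+\alpha_i$ for $\alpha_i \in \Sigma_{P_i}$ and varying over the distinct petals are ``separated in numerator'' — distinct choices of $(\alpha_i)$ give frequency data that cannot be matched after convolving with $[-\eps,\eps]^d$ and summing, again by the smallness condition \eqref{small} applied with the full exponent $2rk$ (which is exactly why the parameter $r$ enters \eqref{small}). Concretely: the sum $\sum_{i=1}^r \beta_i$ with $\beta_i \in \Sigma_{A_0}+\alpha_i$ lies in $\Sigma_{\leq k}$-translates whose denominators have size $\leq q_{\max}^{rk}$, hence distinct such sums differ by $\geq q_{\max}^{-2rk} > \frac{2r}{c}\eps$, which dominates the total spread $2r\eps$ coming from the $r$ boxes $[-\eps,\eps]^d$; so the supports of $\prod_i\F f_{\Sigma_{A_0}+\alpha_i}$ for distinct $(\alpha_i)$ are disjoint, giving orthogonality.

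\textbf{Main obstacle.} I expect part (ii) — denominator orthogonality — to be the crux: one must carefully set up the Fourier-side expansion of the $r$-fold pairing, justify that the integral localizes to the exact equation $\sum\alpha_j = \sum\alpha_{r+j}$ in $\T^d$ (this uses part (i) plus the smallness \eqref{small} with the factor $2r$, to rule out ``wraparound''), and then run the Chinese-remainder / nonce-cancellation argument component-by-component. Parts (i) and (iii) are then bookkeeping with \eqref{sigma} and \eqref{small}, though (iii) also requires being attentive to the fact that $\Sigma_{A_0}+\Sigma_{P_i} = \Sigma_{A_0\uplus P_i} = \Sigma_{A_i}$ only as a sumset and that the decomposition of a sum of elements, one from each $\Sigma_{A_0}+\alpha_i$, into its ``core part'' and ``petal parts'' is unique by CRT — that uniqueness is what ultimately forces $(\alpha_i)=(\alpha_i')$.
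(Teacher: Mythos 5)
Your proposal follows essentially the same route as the paper: part (i) is the same separation estimate (and in fact slightly more careful than the paper's phrasing, since it correctly gives $q_{\max}^{-2k}$ rather than $q_{\max}^{-k}$ as the separation; either suffices given \eqref{small}), part (ii) is the same ``kill the nonce term'' argument, and part (iii) is the same $Q_{A_0}$/CRT bookkeeping for numerator orthogonality. The one place where you drift off course is the conclusion of part (ii). Having correctly reduced matters to showing that $\sum_{j\le r}\alpha_j=\sum_{j>r}\alpha_j$ is impossible in $\T^d$, and having correctly noted that the nonce $s$ forces the unique $\alpha_{\ell_0}\in\Sigma_{A_{\ell_0}}$ with $s\in A_{\ell_0}$ to have a nonzero $s$-component while every other $\alpha_\ell$ has trivial $s$-component, you should simply stop: the constraint set is empty, so every summand vanishes identically and there is nothing left to sum. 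Instead you pivot to a ``cancellation by symmetry'' argument (claiming that shifting $\alpha_{\ell_0}$ by $\delta\in\T^d[s]^d$ is a symmetry of the constraint set), which is not correct -- such a shift moves the left-hand side of the constraint by $\pm\delta$, so it is not a symmetry -- and also unnecessary. The paper reaches the same contradiction by multiplying the relation by $Q_A$ for a set $A$ containing all $A_\ell$ except $A_{\ell_0}$, which annihilates every term except $Q_A\alpha_{\ell_0}\neq 0$; your CRT projection to the $s$-component is an equivalent and equally clean way to say the same thing, so you can just delete the last two sentences of your treatment of (ii).
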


\begin{proof}
Note from Definition \ref{major-arc} that the coordinates of every element of $\Sigma_{\leq k}$ are rational numbers with denominator at most $q_{\max}^k$.  In particular, if $\alpha, \alpha'$ are two distinct elements of $\Sigma_{\leq k}$, then $\alpha, \alpha'$ differ in $\ell^\infty$ metric by at least $\frac{1}{q_{\max}^k}$.  The claim (i) now follows (with room to spare) from \eqref{small}.

Now we prove (ii).  From inspecting the Fourier transform, it suffices to show that
$$ \sum_{j=1}^r (\alpha_j + \theta_j) - \sum_{j=r+1}^{2r} (\alpha_j+\theta_j) \neq 0$$
in $\T^d$ whenever $\alpha_j \in \Sigma_{A_j}$ and $\theta_j \in [-\eps,\eps]^d$ for $j=1,\dots,2r$.  As the $A_1,\dots,A_{2r}$ contain a nonce, there exists an $A \subseteq S$ which contains all but exactly one of the $A_1,\dots,A_{2r}$.  Recalling that $Q_A = \prod_{q \in A} q$ and that the elements of $S$ are pairwise coprime, we conclude that $Q_A(\alpha_1+\dots+\alpha_r-\alpha_{r+1}-\dots-\alpha_{2r})$ has precisely one non-zero term in $\T^d$, and hence the point $\alpha_1+\dots+\alpha_r-\alpha_{r+1}-\dots-\alpha_{2r} \in \T^d$ is non-zero.  Observe that the coordinates of this point consist of rational numbers of denominator at most $\frac{1}{q_{\max}^{2rk}}$.  The claim now follows from \eqref{small} and the triangle inequality.

Now we prove (iii).  Inspecting the Fourier transform, it suffices to show that
$$ \sum_{j=1}^r (\alpha_{0,j} + \alpha_j + \theta_j) - \sum_{j=r+1}^{2r} (\alpha_{0,j} + \alpha_j+\theta_j) \neq 0$$
in $\T^d$ whenever $\alpha_{0,j} \in \Sigma_{A_0}$ and $\theta_j \in [-\eps,\eps]^d$ for $j=1,\dots,2r$, and
$$ (\alpha_1,\dots,\alpha_r), (\alpha_{r+1},\dots,\alpha_{2r}) \in \Sigma_{A_1 \backslash A_0} \times \dots \times \Sigma_{A_r \backslash A_0}$$
are distinct.  Multiplying by $Q_{A_0}$ to cancel the $\alpha_{0,j}$ factors, it suffices to show that
$$ \sum_{j=1}^r Q_{A_0}(\alpha_j + \theta_j) - \sum_{j=r+1}^{2r} Q_{A_0}(\alpha_j+\theta_j) \neq 0.$$
Note that $Q_{A_0} \leq q_{\max}^{|A_0|}$.  On the other hand, from the sunflower hypothesis and the Chinese remainder theorem we see that the point $Q_{A_0}(\alpha_1+\dots+\alpha_r - \alpha_{r+1}-\dots-\alpha_{2r})$ is non-zero.  The coordinates of this point consist of rational numbers of denominator at most $\frac{1}{q_{\max}^{2r(k-|A_0|)}}$, and the claim now follows from \eqref{small} and the triangle inequality.
\end{proof}

We can exploit these orthogonality properties to obtain a description of the $\ell^{2r}$ norm of a sum $f_{\Sigma_{\leq k}}$ associated to a major arc system, as well as a companion result that will be useful in the sequel.

\begin{theorem}[Applying orthogonality]\label{ortho-apply}  Let $(d,k,S,\eps)$ be a major arc parameter set which is $(r,c)$-good for some $r \in \Z_+$ and $0 < c < 1$. Let $H$ be a finite-dimensional Hilbert space.
\begin{itemize}
\item[(i)]  (Description of $\ell^{2r}$ norm)  If $(f_\alpha)_{\alpha \in \Sigma_{\leq k}}$ is a major arc system adapted to $(d,k,S,\eps)$, then we have
\begin{equation}\label{x1}
\begin{split}
 O(r \Log^{1/2}(k))^{-k}&\| f_{\Sigma_{\leq k}} \|_{\ell^{2r}(\Z^d;H)}\\
&\leq \left( \sum_{A_0 \in \binom{S}{\leq k}} \left\| (f_{\alpha + \Sigma_{A_0}})_{\alpha \in \Sigma_{(A_0)}} \right\|_{\ell^{2r}(\Z^d; H^{\Sigma_{(A_0)}})}^{2r}\right)^{1/2r} \\
&\leq O_c(1)^{d} O(1)^k \| f_{\Sigma_{\leq k}} \|_{\ell^{2r}(\Z^d;H)}.
\end{split}
\end{equation}
\item[(ii)]  (Rubio de Francia type estimate) Let $\varphi_0 \in C^\infty_c(\R)$ be a bump function supported on $[-1,1]$, and let $\varphi \in C^\infty_c(\R^d)$ be the symbol
$$ \varphi(\xi_1,\dots,\xi_d) = \prod_{j=1}^d \varphi_0\left(\frac{\xi_j}{\eps}\right).$$
Then for any $2 \leq p \leq \infty$ and $f \in \ell^p(\Z^d;H)$, one has the inequality
\begin{equation}\label{stab}
\left( \sum_{A_0 \in \binom{S}{\leq k}} \left\| (\Op_{\varphi; \alpha+\Sigma_{A_0}} f)_{\alpha \in \Sigma_{(A_0)}} \right\|_{\ell^p(\Z^d; H^{\Sigma_{(A_0)}})}^{p}\right)^{1/p}
\leq O_{\varphi_0}(1)^d O(1)^k \|f\|_{\ell^{p}(\Z^d;H)}.
\end{equation}
\end{itemize}
\end{theorem}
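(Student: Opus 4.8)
The plan is to obtain part (i) from the orthogonality machinery of Sections~\ref{super-sec}--\ref{sunflower-sec}, and part (ii) (together with the right-hand inequality of part (i)) from a Rubio de Francia type square function estimate transferred through the sampling map.

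For the first (upper) inequality in \eqref{x1}: by Lemma~\ref{ortho}(ii) the hypersystem $(f_{\Sigma_A})_{A\in\binom{S}{\leq k}}$ is Type~II $2r$-superorthogonal, so Theorem~\ref{sys-ortho}(ii) bounds $\|f_{\Sigma_{\leq k}}\|_{\ell^{2r}(\Z^d;H)}=\|\sum_A f_{\Sigma_A}\|_{\ell^{2r}(\Z^d;H)}$ by $O(r)^{k/2}$ times the square function $\|(f_{\Sigma_A})_{A\in\binom{S}{\leq k}}\|_{\ell^{2r}(\Z^d;H^{[\leq k]})}$. I would split this square function according to the cardinality $k'=|A|$ (losing only the harmless factor $k+1=O(1)^k$ from the triangle inequality) and, for each fixed $k'$, apply Corollary~\ref{sunsquare}, which dominates the $2r$-th power of $\|(f_{\Sigma_A})_{A\in\binom{S}{k'}}\|_{\ell^{2r}(\Z^d;H^{\binom{S}{k'}})}$ by $(4\Sun(k',r))^r$ times a sum over cores $A_0$ and pairwise disjoint $A_1,\dots,A_r$ of the quantities $\left\|\prod_{i=1}^r\|f_{\Sigma_{A_0\cup A_i}}\|_H\right\|_{\ell^2(\Z^d)}^2=\left\|\bigotimes_{i=1}^r f_{\Sigma_{A_0\cup A_i}}\right\|_{\ell^2(\Z^d;H^{\otimes r})}^2$. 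Expanding $f_{\Sigma_{A_0\cup A_i}}=\sum_{\alpha_i\in\Sigma_{A_i}}f_{\alpha_i+\Sigma_{A_0}}$ via \eqref{sigma}, each such tensor becomes a sum over $(\alpha_1,\dots,\alpha_r)\in\Sigma_{A_1}\times\dots\times\Sigma_{A_r}$ of $\bigotimes_i f_{\alpha_i+\Sigma_{A_0}}$; since $A_0\cup A_1,\dots,A_0\cup A_r$ form a sunflower, Lemma~\ref{ortho}(iii) makes these tensors pairwise orthogonal in $\ell^2(\Z^d;H^{\otimes r})$, so by Pythagoras the quantity equals $\sum_{(\alpha_i)}\sum_n\prod_i\|f_{\alpha_i+\Sigma_{A_0}}(n)\|_H^2$. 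Dropping the disjointness and cardinality constraints on the $A_i$ (each admissible tuple $(\alpha_i)$ is counted once, as the $A_i$ are recovered from the $\alpha_i$) and using $\Sigma_{A_i}\subseteq\Sigma_{(A_0)}$, the sum over sunflowers is at most $\sum_{A_0\in\binom{S}{\leq k}}\|(f_{\alpha+\Sigma_{A_0}})_{\alpha\in\Sigma_{(A_0)}}\|_{\ell^{2r}(\Z^d;H^{\Sigma_{(A_0)}})}^{2r}$, i.e.\ $M^{2r}$ with $M$ the middle quantity of \eqref{x1}. Finally $\Sun(k',r)\leq\Sun(k,r)$, and inserting $\Sun(k,r)\leq O(r\Log k)^k$ from \eqref{rao} the accumulated constant is $O(r)^{k/2}\cdot O(1)^k\cdot O(r\Log k)^{k/2}=O(r\Log^{1/2}k)^k$, which is the claim.

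For part (ii) I would first pass to genuine subgroups: by M\"{o}bius inversion on the Boolean lattice of subsets of $A_0$, $1_{\Sigma_{A_0}}=\sum_{B\subseteq A_0}(-1)^{|A_0|-|B|}1_{\T^d[Q_B]}$, hence $\Op_{\varphi;\alpha+\Sigma_{A_0}}=\sum_{B\subseteq A_0}(-1)^{|A_0|-|B|}\Op_{\varphi;\alpha+\T^d[Q_B]}$, and since $A_0$ has $2^{|A_0|}\leq 2^k$ subsets this reduces \eqref{stab}, up to an $O(1)^k$ factor, to bounding the analogous square functions with $\Sigma_{A_0}$ replaced by a subgroup $\T^d[Q]$, $Q=Q_B\leq q_{\max}^k$. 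By Lemma~\ref{ortho}(i) and the smallness \eqref{small} the cosets $\alpha+\T^d[Q]$ for $\alpha\in\Sigma_{(A_0)}$ (whose denominators are coprime to $Q$) produce pairwise disjoint frequency sets $\alpha+\T^d[Q]+[-\eps,\eps]^d\subseteq{\mathcal M}$, and each point of $\Sigma_{\leq k}$ is met at most $2^k$ times as $A_0,B,\alpha$ vary; this bounded overlap is what keeps the outer sum over $A_0$ controlled by $O(1)^k$ rather than by a power of $|S|$. To estimate a fixed such square function I would transfer to the adeles as in the discussion following Proposition~\ref{qss}: after a smooth frequency localization, the sampling identity \eqref{sam} and Proposition~\ref{qss} identify $\Op_{\varphi;\alpha+\T^d[Q]}$, up to $\exp(O_c(d))$, with the adelic multiplier $\Op_{\varphi\otimes 1_{\alpha+\T^d[Q]}}$ on $L^p(\R^d\times\hat\Z^d;H)$, which factors as the bump multiplier $\Op_\varphi$ on the $\R^d$ fibre (a convolution with an $L^1(\R^d)$ kernel of mass $O_{\varphi_0}(1)^d$, hence bounded on $L^p$ for every $1\leq p\leq\infty$ and for Banach-valued functions) tensored with a projection on $\hat\Z^d$. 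The Marcinkiewicz--Zygmund theorem (Theorem~\ref{mz}) disposes of $H$, and by the Chinese remainder theorem the projection on $\hat\Z^d$ is, on the finite quotient carrying the relevant frequencies, the identity on one factor tensored with projection onto a single character on the complementary factor, so the remaining square function over $\alpha$ is the elementary square function for distinct-character projections on a finite abelian group, which is bounded on $L^p$ for all $2\leq p\leq\infty$ with constant $1$ (from Plancherel and H\"{o}lder, independently of the group's order). Collecting the factors $\exp(O_c(d))$, $O_{\varphi_0}(1)^d$, $O(1)^k$ and $1$ yields \eqref{stab}. The right-hand inequality of part (i) then follows by applying the same argument to a bump $\varphi$ that equals $1$ on $[-\eps,\eps]^d$ and is supported on $[-\eps/c',\eps/c']^d$ for some $c<c'<1$, so that $f_{\alpha+\Sigma_{A_0}}=\Op_{\varphi;\alpha+\Sigma_{A_0}}f_{\Sigma_{\leq k}}$ (the dilated arcs $\beta+[-\eps/c',\eps/c']^d$, $\beta\in\Sigma_{\leq k}$, remaining disjoint by Lemma~\ref{ortho}(i)), the constant then being $O_c(1)^d O(1)^k$.

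The step I expect to be the main obstacle is this adelic Rubio de Francia estimate together with the bookkeeping that keeps all constants of the form $\exp(O_c(d))O(1)^k$, with no dependence on $|S|$: one must perform the passage to the adeles so that the inner sum over $\alpha_0\in\Sigma_{A_0}$ (a coset of $\T^d[Q]$) inside each major arc is realized as a single tensor-factor operation rather than expanded by the triangle inequality (which would cost a power of $|\Sigma_{A_0}|$), must carry out the smooth frequency localization of a general $f\in\ell^p(\Z^d;H)$ without introducing sharp projections onto unions of arcs (which are not uniformly bounded on $\ell^p$), and must treat the endpoints $p=2$ and $p=\infty$ carefully, where generic square function estimates degenerate but the character-projection bound and the $L^1$-kernel bound survive.
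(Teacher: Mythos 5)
Your proof of the first inequality in \eqref{x1} is essentially identical to the paper's: Lemma~\ref{ortho}(ii) plus Theorem~\ref{sys-ortho}(ii), split by cardinality, Corollary~\ref{sunsquare} with the sunflower bound \eqref{rao}, then Lemma~\ref{ortho}(iii) and Pythagoras followed by dropping the disjointness constraint. The reduction of the second inequality of \eqref{x1} to part (ii) via a slightly wider bump $\varphi$ also matches the paper.

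However, your route to part (ii) has a genuine gap that I do not see how to close without circularity. After the inclusion--exclusion step (which the paper also uses, but only in its $p=\infty$ endpoint) you want to bound, for a fixed $A_0$ and $B\subseteq A_0$, the $\ell^p(\Z^d;H^{\Sigma_{(A_0)}})$-norm of the vector $(\Op_{\varphi;\alpha+\T^d[Q_B]}f)_{\alpha\in\Sigma_{(A_0)}}$ by transferring the whole square function to $\A_\Z^d$ ``after a smooth frequency localization.'' The problem is that the combined frequency support is the union $\bigcup_{\alpha\in\Sigma_{(A_0)}}(\alpha+\T^d[Q_B]+[-\eps,\eps]^d)$, whose arithmetic part is not a coset of a single small subgroup: the denominators of the $\alpha$'s range over all products of at most $k-|A_0|$ elements of $S\setminus A_0$. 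To put this union inside a non-aliasing set $[-c/Q',c/Q']^d\times\T^d[Q']$ so that Proposition~\ref{qss} applies, one would need $Q'$ to be the least common multiple of all these denominators, which can be as large as $Q_S=\prod_{q\in S}q$; but the $(r,c)$-goodness hypothesis only gives $\eps<c/(2r\,q_{\max}^{2rk})$, and in the Ionescu--Wainger setting $Q_S\sim 3^N$ is astronomically larger than $q_{\max}^{2rk}\sim\exp(O(N^{\rho/2}))$, so $\eps<c/Q_S$ fails by a wide margin. Applying Proposition~\ref{qss} component-by-component (one $\alpha$ at a time, with $Q'=q_\alpha Q_B\leq q_{\max}^k$) does satisfy the smallness condition, but then you no longer have a single adelic square function: each $\alpha$ lives in a different non-aliasing window, and the ``elementary square function for distinct-character projections on a finite abelian group'' you invoke is an estimate on $\hat\Z^d$, not a statement you can assemble from separate scalar applications of Proposition~\ref{qss}. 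In short, the vector-valued sampling equivalence you need over the union $\Sigma_{\leq k}+[-\eps,\eps]^d$ is precisely Theorem~\ref{main-sampling}, which the paper proves \emph{after} and \emph{using} Theorem~\ref{ortho-apply}; invoking it here would be circular.

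The paper sidesteps this by not leaving $\Z^d$ at all: for the $p=\infty$ endpoint it reduces (after translation invariance and the same inclusion--exclusion step) by duality to an $\ell^1(\Z^d;H)$ bound for $\sum_\alpha c_\alpha\,\Op^*_{\varphi;\alpha+\T^d[Q_B]}\delta_0$, introduces the weight $w(n)=\prod_j(1+\eps^2n_j^2)$, applies Cauchy--Schwarz against $w^{-1}$ (using that the function is supported on $(Q_B\Z)^d$, which buys a factor $Q_B^{d/2}$), and then exploits the pairwise-disjoint Fourier supports (Lemma~\ref{ortho}(i)) together with an explicit $\ell^2$ computation driven by the rapid decay of $\hat\varphi_0$. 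This weight trick is what replaces, on $\Z^d$, the role that the $L^1$-kernel and Bessel bounds would play on $\A_\Z^d$, and it is exactly the ingredient that your adelic reformulation lacks a non-circular substitute for. If you want to pursue the adelic route, you would effectively have to prove a version of Proposition~\ref{qss} for the non-aliasing set $[-\eps,\eps]^d\times\Sigma_{\leq k}$ first---and that is Theorem~\ref{main-sampling}, whose proof rests on the present theorem.

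One smaller imprecision: in the passage from Corollary~\ref{sunsquare} you write ``(each admissible tuple $(\alpha_i)$ is counted once, as the $A_i$ are recovered from the $\alpha_i$),'' but dropping the pairwise-disjointness constraint on the $A_i$'s is an inequality, not an identity, and it is also here that the cardinality restriction $|A_i|\leq k-|A_0|$ is dropped; the resulting sum over $(\alpha_1,\dots,\alpha_r)\in\Sigma_{(A_0)}^r$ is exactly $\|(f_{\alpha+\Sigma_{A_0}})_\alpha\|_{\ell^{2r}(\Z^d;H^{\Sigma_{(A_0)}})}^{2r}$ after Fubini--Tonelli, as the paper notes. This part is fine in substance, just worth stating as a one-sided bound.
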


\begin{proof}  We begin with (ii), as this will be used in the proof of (i).
 By interpolation it suffices to establish the claims for $p=2,\infty$.  For $p=2$ the claim follows from Lemma \ref{ortho}(i) and Plancherel's theorem, noting that each $\Op_{\varphi; \alpha+\Sigma_{A_0}} f$ has Fourier transform supported in $\Sigma_{A_0} + \alpha + [-\eps,\eps]^d$, and each $\beta \in \Sigma_{\leq k}$ has at most $2^k$ representations of the form $\beta = \alpha+\alpha_0$ with $A_0 \in \binom{S}{\leq k}$, $\alpha_0 \in \Sigma_{A_0}$, $\alpha \in \Sigma_{(A_0)}$.  For $p=\infty$, it suffices by translation invariance to show that
$$
\left\|\left(\Op_{\varphi; \alpha+\Sigma_{A_0}} f(0)\right)_{\alpha \in \Sigma_{(A_0)}} \right\|_{H^{\Sigma_{(A_0)}}}
\leq O_{\varphi_0}(1)^d O(1)^k \|f\|_{\ell^\infty(\Z^d;H)}$$
for any $f \in L^\infty(\T^d;H)$ and $A_0 \in \binom{S}{\leq k}$.  
By the inclusion-exclusion formula, and conceding a factor of $2^k$, it suffices to show that
$$
\left\|\left( \Op_{\varphi; \alpha+\Sigma_{\subseteq A'_0}} f(0) \right)_{\alpha \in \Sigma_{(A_0)}} \right\|_{H^{\Sigma_{(A_0)}}}
\leq O_{\varphi_0}(1)^{d} \|f\|_{\ell^\infty(\Z^d;H)}$$
for any $A'_0 \subseteq A_0$. By duality, this bound is equivalent to the assertion that
$$
\left\| \sum_{\alpha \in \Sigma_{(A_0)}} c_\alpha \Op_{\varphi; \alpha+\Sigma_{\subseteq A'_0}}^* \delta_0 \right\|_{\ell^1(\Z^d;H)} \leq O_{\varphi_0}(1)^{d} \| (c_\alpha)_{\alpha \in \Sigma_{(A_0)}} \|_{H^{\Sigma_{(A_0)}}}$$
for any sequence $c_\alpha \in H$, $\alpha \in \Sigma_{(A_0)}$, where $\delta_0$ is the Kronecker delta function.  Observe that the integrand on the left-hand side is actually supported on $(Q_{A'_0}\Z)^d$.  If we introduce the weight function
$$ w(n_1,\dots,n_d) \coloneqq \prod_{j=1}^d (1+\eps^2 n_j^2)$$
we see from \eqref{small} that
$$ \| w^{-1} \|_{\ell^2(\Z^d)} \leq O(1)^d \eps^{d/2} Q_{A'_0}^{d/2} $$
so by Cauchy--Schwarz it will suffice to establish the bound
$$
\left\| \sum_{\alpha \in \Sigma_{(A_0)}} c_\alpha w \Op_{\varphi; \alpha+\Sigma_{\subseteq A'_0}}^* \delta_0 \right\|_{\ell^2(\Z^d;H)} \leq O_{\varphi_0}(1)^{d} \eps^{d/2} Q_{A'_0}^{d/2} \| (c_\alpha)_{\alpha \in \Sigma_{(A_0)}} \|_{H^{\Sigma_{(A_0)}}}$$
From Lemma \ref{ortho}(i) we see that the functions $w \Op_{\varphi; \alpha+\Sigma_{\subseteq A'_0}}^* \delta_0$, $\alpha \in \Sigma_{(A_0)}$ have disjoint Fourier supports and are thus pairwise orthogonal in $\ell^2(\Z^d;H)$.  Thus by the Pythagorean theorem, it suffices to show that
$$
\left\| w \Op_{\varphi; \alpha+\Sigma_{\subseteq A'_0}}^* \delta_0 \right\|_{\ell^2(\Z^d;H)} \leq O_{\varphi_0}(1)^{d} \eps^{d/2} Q_{A'_0}^{d/2} $$
for each $\alpha \in \Sigma_{(A_0)}$.  Each function $w \Op_{\varphi; \alpha+\Sigma_{\subseteq A'_0}}^* \delta_0$ can be split further in $\ell^2(\Z^d;H)$ into $Q_{A'_0}^d$ orthogonal components $w \Op_{\varphi; \alpha+\alpha_0}^* \delta_0$ with $\alpha_0 \in \Sigma_{\subseteq A'_0}$.  By the Pythagorean theorem again, it thus suffices to establish the bound
$$
\| w \Op_{\varphi; \alpha+\alpha_0}^* \delta_0 \|_{\ell^2(\Z^d)} \leq O_{\varphi_0}(1)^{d} \eps^{d/2} $$
for each $\alpha \in \Sigma_{(A_0)}$, $\alpha_0 \in \Sigma_{\subseteq A'_0}$.  The magnitude of the expression inside the norm of the left-hand side does not actually depend on $\alpha+\alpha_0$, so we may assume that $\alpha+\alpha_0=0$.  The left-hand side then factors as a tensor product and it now suffices to establish the claim for $d=1$, that is to say to show that
$$ \sum_{n \in \Z} (1 + \eps^2 n^2)^2 \eps^2 |\hat \varphi_0(\eps n)|^2 \leq O_{\varphi_0}(\eps)$$
which follows from the rapid decrease of $\hat \varphi$ (and noting from \eqref{small} that $\eps \leq 1$).  This completes the proof of (ii).

Now we prove (i).  By Lemma \ref{ortho}(ii) and Theorem \ref{sys-ortho}(ii), we have
$$ 
\| f_{\Sigma_{\leq k}} \|_{\ell^{2r}(\Z^d;H)} \leq O(r)^{k/2} \left\| (f_{\Sigma_A})_{A \in \binom{S}{\leq k}} \right\|_{\ell^{2r}(\Z^d; H^{[\leq k]})}.$$
If we then apply Corollary \ref{sunsquare} with the sunflower bound \eqref{rao}, and take $(2r)^{\mathrm{th}}$ roots, we obtain
$$ 
\| f_{\Sigma_{k'}} \|_{\ell^{2r}(\Z^d;H)} \leq O(r)^{k/2} O(r \Log(k'r))^{k'/2} \left\| (f_{\Sigma_A})_{A \in \binom{S}{k'}} \right\|_{\ell^{2r}(\Z^d; H^{\binom{S}{k'}})}$$
for any $k' \leq k$.  Summing in $k$, we conclude that
\begin{equation}\label{cake}
\| f_{\Sigma_{\leq k}} \|_{\ell^{2r}(\Z^d;H)} \leq O(r \Log^{1/2}(kr))^k \left( \sum_{A_0 \in \binom{S}{\leq k}} X_{A_0}\right)^{1/2r}
\end{equation}
where
$$ X_{A_0} \coloneqq  \sum^{***} \| \prod_{i=1}^r f_{\Sigma_{A_0 \cup A_i}} \|_{\ell^2(\Z^d;H^{\otimes r})}^2$$
and $\sum^{***}$ denotes a sum over tuples $(A_1,\dots,A_r)$ of disjoint sets $A_1,\dots,A_r \in \binom{S \backslash A_0}{\leq k-|A_0|}$.
For $A_0,A_1,\dots,A_r$ as above, we can split
$$ \prod_{i=1}^r f_{\Sigma_{A_0 \cup A_i}} = \sum_{\alpha_1 \in A_1,\dots,\alpha_r \in A_r} \prod_{i=1}^r f_{\alpha_i + \Sigma_{A_0}}.$$
From Lemma \ref{ortho}(iii) and the Pythagorean theorem, we may thus write 
$$
X_{A_0} =\sum^{***} \sum_{\alpha_1 \in \Sigma_{A_1},\dots,\alpha_r \in \Sigma_{A_r}} \| \prod_{i=1}^r f_{\alpha_i + \Sigma_{A_0}} \|_{\ell^2(\Z^d;H^{\otimes r})}^2.$$
We drop the hypothesis of disjointness in the $\sum^{***}$ sum to obtain the upper bound
$$
X_{A_0} \leq \sum_{A_1,\dots,A_r \in \binom{S \backslash A_0}{\leq k-|A_0|}} \sum_{\alpha_1 \in \Sigma_{A_1},\dots,\alpha_r \in \Sigma_{A_r}} \| \prod_{i=1}^r f_{\alpha_i + \Sigma_{A_0}} \|_{\ell^2(\Z^d;H^{\otimes r})}^2$$
which by the Fubini--Tonelli theorem can be rearranged as
$$
X_{A_0} \leq \left\| (f_{\alpha + \Sigma_{A_0}})_{\alpha \in \Sigma_{(A_0)}} \right\|_{\ell^{2r}(\Z^d; H^{\Sigma_{(A_0)}})}^{2r}.$$
This gives the first inequality in \eqref{x1}.  

Now we establish the second inequality in \eqref{x1}.  Let $c' \coloneqq \frac{1+c}{2}$, so that $c < c' < 1$. Let $\varphi \in C^\infty_c(\R^d)$ be a multiplier of the form
\begin{equation}\label{varph}
 \varphi(\xi_1,\dots,\xi_d) \coloneqq \prod_{j=1}^d \varphi_0(\xi_j/\eps),
\end{equation}
where $\varphi_0 \in C^\infty_c(\R)$ is a fixed real even bump function (depending only on $c$) supported on $[-c'/c,c'/c]$ that equals $1$ on $[-1,1]$.  From Lemma \ref{ortho}(i) (with $c$ replaced by $c'$, and $\eps$ replaced by $\frac{c'}{c} \eps$) we have
$$ f_{\alpha + \Sigma_{A_0}} \coloneqq \Op_{\varphi; \alpha+\Sigma_{A_0}} f_{\Sigma_{\leq k}}$$
and the claim now follows from (ii) (setting $p=2r$).
\end{proof}

Now we can prove Theorem \ref{main}.  Let the notation and hypotheses be as in that theorem.  We normalize $\|\Op_m\|_{B(L^{2r}(\R^d))}=1$ and $\|f\|_{\ell^{2r}(\Z^d;H)}=1$ (we can also assume by limiting arguments that $f \in \ell^2(\Z^d;H)$ to avoid technicalities), and our task is to show that
$$
\left\| \sum_{A \in \binom{S}{\leq k}} \epsilon_A \Op_{m;\Sigma_A} f\right \|_{\ell^{2r}(\Z^d;H)}
\leq O_c(1)^d O(r \Log^{1/2}(k))^k.
$$
Applying Theorem \ref{ortho-apply}(i) to the hypersystem $(\epsilon_A \Op_{m;\Sigma_A} f)_{A \in \binom{S}{\leq k}}$, it suffices to show that
\begin{equation}\label{maz}
 \left( \sum_{A_0 \in \binom{S}{\leq k}} \left\| (\Op_{m;\alpha + \Sigma_{A_0}} f)_{\alpha \in \Sigma_{(A_0)}}\right \|_{\ell^{2r}(\Z^d; H^{\Sigma_{(A_0)}})}^{2r}\right)^{1/2r} \leq O_c(1)^{d+k}.
\end{equation}
With $\varphi$ as in \eqref{varph}, we may use Lemma \ref{ortho}(i) to factor
$$ \Op_{m;\alpha + \Sigma_{A_0}} f= \Op_{m;\alpha+\Sigma_{A_0}} \Op_{\varphi;\alpha+\Sigma_{A_0}} f.$$
Next, from the Magyar--Stein--Wainger sampling principle (Proposition \ref{msw}) we have
$$ \| \Op_{m;\Sigma_{A_0}} F \|_{\ell^{2r}(\Z^d;H)} \leq O(1)^d \|F\|_{\ell^{2r}(\Z^d;H)}$$
for any $F \in \ell^{2r}(\Z^d;H)$, hence by the Marcinkiewicz--Zygmund theorem (Theorem \ref{mz}) one has
$$ \left\| (\Op_{m;\Sigma_{A_0}} F_\alpha)_{\alpha \in \Sigma_{(A_0)}} \right\|_{\ell^{2r}(\Z^d; H^{\Sigma_{(A_0)}})} \leq O(1)^d \left\|(F_\alpha)_{\alpha \in \Sigma_{(A_0)}}  \right\|_{\ell^{2r}(\Z^d; H^{\Sigma_{(A_0)}})}$$
for any $F_\alpha \in \ell^{2r}(\Z^d;H)$, which by the modulation symmetries of the Fourier transform imply that
$$ \left\| (\Op_{m;\alpha+\Sigma_{A_0}} F_\alpha)_{\alpha \in \Sigma_{(A_0)}} \right\|_{\ell^{2r}(\Z^d; H^{\Sigma_{(A_0)}})} \leq O(1)^d \left\|(F_\alpha)_{\alpha \in \Sigma_{(A_0)}} \right\|_{\ell^{2r}(\Z^d; H^{\Sigma_{(A_0)}})}.$$
Putting all this together, we reduce to showing that
$$
 \left( \sum_{A_0 \in \binom{S}{\leq k}} \left\| (\Op_{\varphi;\alpha + \Sigma_{A_0}} f)_{\alpha \in \Sigma_{(A_0)}}\right \|_{\ell^{2r}(\Z^d; H^{\Sigma_{(A_0)}})}^{2r}\right)^{1/2r} \leq O_c(1)^d O(1)^k.
$$
But this follows from Theorem \ref{ortho-apply}(ii).  This concludes the proof of Theorem \ref{main}.

Now we observe an arithmetic analogue of Theorem \ref{ortho-apply}, in which the spatial scale parameter $\eps$ becomes irrelevant:

\begin{theorem}[Applying orthogonality, arithmetic limit]\label{ortho-apply-arith}  Let $(d,k,S,\eps)$ be a major arc parameter set. Let $H$ be a finite-dimensional Hilbert space.  For each $\alpha \in \Sigma_{\leq k}$, let $f_\alpha \in \Schwartz(\A_\Z^d)$ have Fourier support in $\R^d \times \{\alpha\}$, and define $f_\Sigma \coloneqq \sum_{\alpha \in \Sigma} f_\alpha$ as before.  Then for every positive integer $r$, we have
\begin{equation}\label{x2}
\begin{split}
 O(r \Log^{1/2}(k))^{-k}&\| f_{\Sigma_{\leq k}} \|_{L^{2r}(\A_\Z^d;H)}\\
&\leq \left( \sum_{A_0 \in \binom{S}{\leq k}} \left\| (f_{\alpha + \Sigma_{A_0}})_{\alpha \in \Sigma_{(A_0)}} \right\|_{L^{2r}(\A_\Z^d; H^{\Sigma_{(A_0)}})}^{2r}\right)^{1/2r} \\
&\leq O(1)^{d+k} \| f_{\Sigma_{\leq k}} \|_{L^{2r}(\A_\Z^d;H)}.
\end{split}
\end{equation}
Also, we have
\begin{equation}\label{x3}
\| \Op_{1_{\Sigma_{\leq k}}} \|_{B(L^{2r}(\hat \Z^d;H))} \leq O(1)^d O(r \Log^{1/2}(k))^{k}.
\end{equation}
\end{theorem}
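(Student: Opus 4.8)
The plan is to rerun the proof of Theorem~\ref{ortho-apply} with the lattice $\Z^d$ replaced by the adeles $\A_\Z^d$ (and, for \eqref{x3}, by $\hat\Z^d$), exploiting the fact that on the arithmetic factor the frequencies $\alpha\in\Sigma_{\leq k}$ are now \emph{exactly} separated rather than merely separated up to an $\eps$-neighbourhood; this is why \eqref{x2} and \eqref{x3} require no $(r,c)$-goodness. I would first record the analogues of Lemma~\ref{ortho}(ii),(iii) in this setting: the hypersystem $(f_{\Sigma_A})_{A\in\binom{S}{\leq k}}$ is Type~II $2r$-superorthogonal in $L^{2r}(\A_\Z^d;H)$, and whenever $A_1,\dots,A_r\in\binom{S}{\leq k}$ form a sunflower with core $A_0$ the tensor products $\prod_i f_{\alpha_i+\Sigma_{A_0}}$ with $\alpha_i\in\Sigma_{A_i\setminus A_0}$ are pairwise orthogonal in $L^2(\A_\Z^d;H^{\otimes r})$. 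Since each $f_\beta$ is Fourier supported in $\R^d\times\{\beta\}$, the relevant inner products and tensor products are Fourier supported in sets of the form $\R^d\times\Omega$ with $\Omega\subset(\Q/\Z)^d$, so the required orthogonality amounts to the assertion that the appropriate combination of the $\alpha_i$ is nonzero in $(\Q/\Z)^d$, which one verifies by multiplying it by a suitable $Q_A$ and invoking the Chinese remainder theorem exactly as in Lemma~\ref{ortho}, now without the $[-\eps,\eps]^d$ perturbation and hence without \eqref{small}. Granting this, the first inequality of \eqref{x2} follows by copying the proof of the first inequality of \eqref{x1} verbatim (Theorem~\ref{sys-ortho}(ii), then Corollary~\ref{sunsquare} with \eqref{rao}, then splitting over cosets of $\Sigma_{A_0}$, numerator orthogonality, Pythagoras, dropping disjointness, Fubini--Tonelli).

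The genuinely new ingredient, replacing Theorem~\ref{ortho-apply}(ii), is an arithmetic square function estimate: for $2\le p\le\infty$ and $F\in L^p(\hat\Z^d;H)$,
$$\Bigl(\sum_{A_0\in\binom{S}{\leq k}}\bigl\|(\Op_{1_{\alpha+\Sigma_{A_0}}}F)_{\alpha\in\Sigma_{(A_0)}}\bigr\|_{L^p(\hat\Z^d;H^{\Sigma_{(A_0)}})}^p\Bigr)^{1/p}\le O(1)^k\|F\|_{L^p(\hat\Z^d;H)},$$
with the outer sum read as a supremum at $p=\infty$. I would obtain this by interpolating two endpoints. At $p=2$, Plancherel turns the left side into $\sum_\beta\|\hat F(\beta)\|_H^2$ weighted by the number of pairs $(A_0,\alpha)$ with $\alpha\in\Sigma_{(A_0)}$ and $\beta\in\alpha+\Sigma_{A_0}$; by \eqref{sigma} a frequency $\beta\in\Sigma_{\leq k}$ lies in a unique $\Sigma_{A''}$, the admissible $A_0$ are exactly the subsets of $A''$, and $\alpha$ is then determined, so the weight is at most $2^{|A''|}\le 2^k$. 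At $p=\infty$ I would expand $1_{\Sigma_{A_0}}=\sum_{B\subseteq A_0}(-1)^{|A_0|-|B|}1_{\Sigma_{\subseteq B}}$ into at most $2^k$ honest subgroups $\Sigma_{\subseteq B}=\T^d[Q_B]$; for each $B$ the cosets $\{\alpha+\Sigma_{\subseteq B}\}_{\alpha\in\Sigma_{(A_0)}}$ form a subfamily of the full partition of the dual group into $\Sigma_{\subseteq B}$-cosets, and on the finite quotient $(\Z/Q_B\Z)^d$ — to which one reduces using that $\Op_{1_{\T^d[Q]}}$ is a conditional expectation, hence an $L^p$-contraction — one has the exact identity $\sum_{C}\|\Op_{1_C}F\|_H^2=\Op_{1_{\Sigma_{\subseteq B}}}(\|F\|_H^2)$, proved by writing $\Op_{1_C}=M_{\xi_C}\Op_{1_{\Sigma_{\subseteq B}}}M_{-\xi_C}$, identifying $\Op_{1_{\Sigma_{\subseteq B}}}$ with averaging over the annihilator subgroup $\Sigma_{\subseteq B}^{\perp}$, and using Plancherel on that subgroup; since $\Op_{1_{\Sigma_{\subseteq B}}}$ contracts $L^{p/2}$, the corresponding square function is $\le\|F\|_{L^p}$. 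The decisive point is that at $p=\infty$ the outer sum is a supremum, so the possibly enormous number $\binom{|S|}{\le k}$ of terms never enters; Riesz--Thorin interpolation between $p=2$ and $p=\infty$ then yields the estimate with a constant depending only on $k$ — uniform in $S$ and $q_{\max}$. This is the step I expect to require the most care, the $\Z^d$ argument of Theorem~\ref{ortho-apply}(ii) (weights adapted to the scale $\eps$) having no direct arithmetic counterpart.

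It remains to assemble the other two conclusions. For the upper inequality of \eqref{x2}, note $f_{\alpha+\Sigma_{A_0}}=\Op_{1\otimes 1_{\alpha+\Sigma_{A_0}}}f_{\Sigma_{\leq k}}$, an operator acting only on the $\hat\Z^d$ coordinate; by Fubini on $\A_\Z^d=\R^d\times\hat\Z^d$ the square-function quantity there equals $\int_{\R^d}\sum_{A_0}\|(\Op_{1_{\alpha+\Sigma_{A_0}}}f_{\Sigma_{\leq k}}(x,\cdot))_\alpha\|_{L^{2r}(\hat\Z^d;H^{\Sigma_{(A_0)}})}^{2r}\,dx$, which the arithmetic square function estimate bounds by $(O(1)^{k})^{2r}\|f_{\Sigma_{\leq k}}\|_{L^{2r}(\A_\Z^d;H)}^{2r}$. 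For \eqref{x3}, fix $\psi\in\Schwartz(\R^d)$ with $\|\psi\|_{L^{2r}(\R^d)}=1$ and compactly supported Fourier transform, let $g\in L^{2r}(\hat\Z^d;H)$, and apply the first inequality of \eqref{x2} to the lifted system $f_\alpha\coloneqq\psi\otimes\Op_{1_{\{\alpha\}}}g$ (Fourier supported in $\R^d\times\{\alpha\}$): since $f_{\Sigma_{\leq k}}=\psi\otimes\Op_{1_{\Sigma_{\leq k}}}g$ and $f_{\alpha+\Sigma_{A_0}}=\psi\otimes\Op_{1_{\alpha+\Sigma_{A_0}}}g$, this gives $\|\Op_{1_{\Sigma_{\leq k}}}g\|_{L^{2r}(\hat\Z^d;H)}\le O(r\Log^{1/2}(k))^k\bigl(\sum_{A_0}\|(\Op_{1_{\alpha+\Sigma_{A_0}}}g)_\alpha\|_{L^{2r}(\hat\Z^d;H^{\Sigma_{(A_0)}})}^{2r}\bigr)^{1/2r}$, and the arithmetic square function estimate bounds the last factor by $O(1)^k\|g\|_{L^{2r}(\hat\Z^d;H)}$, establishing \eqref{x3}.
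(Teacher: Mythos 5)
Your proposal is correct, but it takes the route the paper explicitly declines: the paper notes that ``one can establish \eqref{x2} by direct repetition of the proof of Theorem~\ref{ortho-apply}, but we shall instead deduce this theorem as a limiting case,'' and then introduces cutoff functions $f_{\alpha,\eps}(n) = f_\alpha(\hat\iota(n))\varphi(\eps n)$ on $\Z^d$, applies Theorem~\ref{ortho-apply}, multiplies by $\eps^{1/2r}$, and passes to the limit $\eps \to 0$ via Riemann integrability and periodicity. You instead rerun the whole argument intrinsically on $\A_\Z^d$ (or $\hat\Z^d$), observing correctly that the denominator and numerator orthogonality of Lemma~\ref{ortho}(ii),(iii) become \emph{cleaner} without the $[-\eps,\eps]^d$ perturbation (hence no $(r,c)$-goodness is needed), and that Theorem~\ref{sys-ortho} and Corollary~\ref{sunsquare} apply verbatim since they are stated over an arbitrary measure space. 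Your replacement for Theorem~\ref{ortho-apply}(ii) is also genuinely different from the paper's auxiliary estimate in the proof of \eqref{x3}: you run the $p=\infty$ endpoint through the conditional-expectation identity $\sum_C \|\Op_{1_C}F\|_H^2 = \Op_{1_{\Sigma_{\subseteq B}}}(\|F\|_H^2)$ after inclusion-exclusion on $1_{\Sigma_{A_0}}$ (conceding an $O(1)^k$ factor), whereas the paper uses translation invariance, duality, Cauchy--Schwarz against the small measure $Q_{A_0}^{-d}$ of the support, and Plancherel. Both give acceptable constants ($O(1)^k$ versus $O(1)^d$, both dominated by the final bound). The trade-off: the paper's limiting argument is shorter and reuses the discrete theorem as a black box, while your direct argument is more self-contained and makes manifest why the arithmetic problem is unconditionally well-behaved; your pointwise conditional-expectation identity is also arguably the cleaner way to handle the $p=\infty$ endpoint. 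The lifting of the $\hat\Z^d$ problem to $\A_\Z^d$ via $f_\alpha = \psi\otimes\Op_{1_{\{\alpha\}}}g$ for the proof of \eqref{x3} is unnecessary (the paper applies the $\hat\Z^d$ fibred version \eqref{x4} directly), but it is harmless.
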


\begin{proof}  One can establish \eqref{x2} by direct repetition of the proof of Theorem \ref{ortho-apply}, but we shall instead deduce this theorem as a limiting case of Theorem \ref{ortho-apply} (basically by sending $\eps$ to zero).  By splitting $\A_\Z^d$ into fibres $\{x\} \times \hat \Z^d$ for $x \in \R^d$ and using the Fubini--Tonelli theorem, it suffices to establish the analogous claim for $\hat \Z^d$, that is to say to establish the bound
\begin{equation}\label{x4}
\begin{split}
 O(r \Log^{1/2}(k))^{-k}&\| f_{\Sigma_{\leq k}} \|_{L^{2r}(\hat \Z^d;H)}\\
&\leq \left( \sum_{A_0 \in \binom{S}{\leq k}} \left\| (f_{\alpha + \Sigma_{A_0}})_{\alpha \in \Sigma_{(A_0)}} \right\|_{L^{2r}(\hat \Z^d; H^{\Sigma_{(A_0)}})}^{2r}\right)^{1/2r} \\
&\leq O(1)^{d+k} \| f_{\Sigma_{\leq k}} \|_{L^{2r}(\hat \Z^d;H)}
\end{split}\end{equation}
where for each $\alpha \in \Sigma_{\leq k}$, $f_\alpha \in \Schwartz(\hat \Z^d;H)$ has Fourier support in $\{\alpha\}$, that is to say $f_\alpha(y) = c_\alpha e(-y \cdot \alpha)$ for some $c_\alpha \in H$.  Let $0 < \eps < 1$ be a small parameter, let $\varphi \in \Schwartz(\R^d)$ be a Schwartz function whose Fourier transform is supported in $[-1,1]^d$ with normalization $\|\varphi\|_{L^{2r}(\R^d)}=1$, and let $f_{\alpha,\eps} \in \Schwartz(\Z^d;H)$ be the functions
$$ f_{\alpha,\eps}(n) \coloneqq f_\alpha(\hat \iota(n)) \varphi(\eps n ) = c_\alpha e(-n \cdot \alpha) \varphi( \eps n )$$
where $\hat \iota \colon \Z^d \to \hat \Z^d$ is the canonical embedding.  Then $(f_{\alpha,\eps'})_{\alpha \in \Sigma_{\leq k}}$ is a major arc system adapted to $(d,k,S,\eps)$.  For $\eps$ small enough, this set of parameters is $(r,1/2)$-good, and so we see from Theorem \ref{ortho-apply} that
\begin{equation}\label{sides}
\begin{split}
 O(r \Log^{1/2}(k))^{-k}&\| f_{\Sigma_{\leq k},\eps} \|_{\ell^{2r}(\Z^d;H)}\\
&\leq \left( \sum_{A_0 \in \binom{S}{\leq k}} \left\| (f_{\alpha + \Sigma_{A_0},\eps})_{\alpha \in \Sigma_{(A_0)}} \right\|_{L^{2r}(\Z^d; H^{\Sigma_{(A_0)}})}^{2r}\right)^{1/2r} \\
&\leq O(1)^{d+k} \| f_{\Sigma_{\leq k},\eps} \|_{\ell^{2r}(\Z^d;H)}
\end{split}
\end{equation}
where for any $\Sigma \subseteq \Sigma_{\leq k}$ we denote
$$f_{\Sigma,\eps}(n) \coloneqq \sum_{\alpha \in \Sigma} f_{\alpha,\eps}(n) = f_\Sigma(\hat \iota(n)) \varphi(\eps n).$$
The functions $f_\Sigma \circ \hat \iota$ are all periodic with period $Q_S$.  By Riemann integrability one then has
$$ \eps^{1/2r} \| f_{\Sigma_{\leq k},\eps} \|_{\ell^{2r}(\Z^d;H)} \to \| f_{\Sigma_{\leq k}} \|_{\ell^{2r}(\hat \Z^d;H)}$$
and similarly
$$ \eps^{1/2r} \left\| (f_{\alpha + \Sigma_{A_0},\eps})_{\alpha \in \Sigma_{(A_0)}} \right\|_{L^{2r}(\Z^d; H^{\Sigma_{(A_0)}})} \to
\left\| (f_{\alpha + \Sigma_{A_0}})_{\alpha \in \Sigma_{(A_0)}} \right\|_{L^{2r}(\hat \Z^d; H^{\Sigma_{(A_0)}})}$$
as $\eps' \to 0$ for any $A_0$.  Multiplying \eqref{sides} by $\eps^{1/2r}$ and taking the limit $\eps \to 0$, we obtain the claim \eqref{x2}.

We now prove \eqref{x3}.  Let $F \in L^2(\hat \Z^d;H)$, then we have $\Op_{1_{\Sigma_{\leq k}}} F = F_{\Sigma_{\leq k}}$, where $F_\alpha(y) \coloneqq e(-y \cdot \alpha) \F_{\hat \Z^d} F(\alpha)$ and $F_\Sigma \coloneqq \sum_{\alpha \in \Sigma} F_\alpha$ for any $\Sigma \subseteq \Sigma_{\leq k}$.  By \eqref{x2} we then have
$$ \| \Op_{1_{\Sigma_{\leq k}}} F\|_{L^{2r}(\hat \Z^d;H)} \leq O(r \Log^{1/2}(k))^{k} \left( \sum_{A_0 \in \binom{S}{\leq k}}\left\| (F_{\alpha + \Sigma_{A_0}})_{\alpha \in \Sigma_{(A_0)}} \right\|_{L^{2r}(\hat \Z^d; H^{\Sigma_{(A_0)}})}^{2r}\right)^{1/2r} $$
so it will suffice to establish the bound
$$
\left( \sum_{A_0 \in \binom{S}{\leq k}} \left\| (F_{\alpha + \Sigma_{A_0}})_{\alpha \in \Sigma_{(A_0)}} \right\|_{L^p(\hat \Z^d; H^{\Sigma_{(A_0)}})}^p\right)^{1/p} \leq O(1)^d \|F\|_{L^p(\hat \Z^d;H)}$$
for all $2 \leq p \leq \infty$.  By interpolation it suffices to establish this for $p=2$ and $p=\infty$.  The claim $p=2$ is immediate from Bessel's inequality.  For $p=\infty$ it suffices by translation invariance to show that
$$(\sum_{\alpha \in \Sigma_{(A_0)}} \|F_{\alpha + \Sigma_{A_0}}(0)\|_H^2)^{1/2} \leq O(1)^d \|F\|_{L^\infty(\hat \Z;H)}$$
which by duality is equivalent to the assertion that
$$
\int_{\hat \Z^d} \left\|\sum_{\alpha \in \Sigma_{(A_0)}} c_\alpha \sum_{\alpha_0 \in \Sigma_{A_0}} e(-y \cdot (\alpha+\alpha_0))\right\|_H\ d\mu_{\hat \Z^d}(y)
\leq O(1)^d (\sum_{\alpha \in \Sigma_{(A_0)}} \|c_\alpha\|_H^2)^{1/2}$$
for any $c_\alpha \in H$ for $\alpha \in \Sigma_{(A_0)}$.

Observe that the integrand vanishes unless the projection of $y$ to $(\Z/Q_{A_0}\Z)^d$ vanishes, thus the integrand is supported on a set of measure $Q_{A_0}^{-d}$.  By Cauchy-Schwarz, it thus suffices to show that
$$
\int_{\hat \Z^d} \left\|\sum_{\alpha \in \Sigma_{(A_0)}} c_\alpha \sum_{\alpha_0 \in \Sigma_{A_0}} e(-y \cdot (\alpha+\alpha_0))\right\|_H^2\ d\mu_{\hat \Z^d}(y)
\leq O(1)^d Q_{A_0}^d \sum_{\alpha \in \Sigma_{(A_0)}} \|c_\alpha\|_H^2.$$
But this is immediate from Plancherel's theorem since $|\Sigma_{A_0}| = Q_{A_0}^d$.
\end{proof}

Now we can prove Theorem \ref{main-sampling}.  To abbreviate the notation we write $X \lessapprox Y$ for
$$ X \leq \exp( O_c( d) + O(k\Log (r\Log k) ) ) Y$$
and $X \approx Y$ for $X \lessapprox Y \lessapprox X$.

We first establish the claim in the case $p=2r$. By a limiting argument we may assume that $f \in \ell^{2}(\Z^d;H)^{\pi(\Omega)}$, thus we can write $f = \sum_{\alpha \in \Sigma_{\leq k}} f_\alpha$ where
$$ f_\alpha(n) \coloneqq \int_{[-\eps,\eps]^d} e(- n \cdot (\alpha+\theta)) \F_{\Z^d} f(\alpha + \theta)\ d\theta.$$
We then have $\Sample_\Omega^{-1} f = \sum_{\alpha \in \Sigma_{\leq k}} F_\alpha$ where
$$ F_\alpha(x,y) \coloneqq e(-y \cdot \alpha) \int_{[-\eps,\eps]^d} e(- x \cdot \theta) \F_{\Z^d} f(\alpha + \theta)\ d\theta.$$
Writing $f_\Sigma \coloneqq \sum_{\alpha \in \Sigma} f_\alpha$ and $F_\Sigma \coloneqq \sum_{\alpha \in \Sigma} F_\alpha$ for any $\Sigma \subseteq \Sigma_{\leq k}$, we see from Theorem \ref{ortho-apply}(i) (and bounding $r \Log^{1/2}(k) = \exp( O(\Log(r \Log k) ) )$) that
$$ \| f \|_{\ell^{2r}(\Z^d;H)} \approx \left( \sum_{A_0 \in \binom{S}{\leq k}} \| (f_{\alpha + \Sigma_{A_0}})_{\alpha \in \Sigma_{(A_0)}} \|_{\ell^{2r}(\Z^d; H^{\Sigma_{(A_0)}})}^{2r}\right)^{1/2r} $$
and similarly from Theorem \ref{ortho-apply-arith} that
$$\| \Sample_\Omega^{-1} f \|_{L^{2r}(\A_\Z^d;H)} \approx \left( \sum_{A_0 \in \binom{S}{\leq k}} \| (F_{\alpha + \Sigma_{A_0}})_{\alpha \in \Sigma_{(A_0)}} \|_{L^{2r}(\A_\Z^d; H^{\Sigma_{(A_0)}})}^{2r}\right)^{1/2r}
$$
so it will suffice to show that
$$
\| (f_{\alpha + \Sigma_{A_0}})_{\alpha \in \Sigma_{(A_0)}} \|_{\ell^{2r}(\Z^d; H^{\Sigma_{(A_0)}})} \approx \| (F_{\alpha + \Sigma_{A_0}})_{\alpha \in \Sigma_{(A_0)}} \|_{L^{2r}(\A_\Z^d; H^{\Sigma_{(A_0)}})}
$$
for each $A_0 \in \binom{S}{\leq k}$.  From expanding the definitions, we see that
$$ (F_{\alpha + \Sigma_{A_0}})_{\alpha \in \Sigma_{(A_0)}} \in L^2(\A_\Z^d; H^{\Sigma_{(A_0)}})^{[-\eps,\eps]^d \times \Sigma_{A_0}}$$
and
$$ (f_{\alpha + \Sigma_{A_0}})_{\alpha \in \Sigma_{(A_0)}} = \Sample (F_{\alpha + \Sigma_{A_0}})_{\alpha \in \Sigma_{(A_0)}}.$$
The claim now follows from Proposition \ref{qss} and Lemma \ref{ortho}(i).

Now we establish Theorem \ref{main-sampling} for general $(2r)' \leq p \leq 2r$.  We begin with the upper bound
$$
\| \Sample_\Omega^{-1} f \|_{L^p(\A_\Z^d;H)} \lessapprox \|f\|_{\ell^p(\Z^d;H)}$$
for $f \in \ell^p(\Z^d;H)^{\pi(\Omega)}$.  With $\varphi$ as in \eqref{varph}, we can write
$$\Sample_\Omega^{-1} f = \Sample_{\Omega'}^{-1} \Op_{\varphi; \Sigma_{\leq k}} f$$
where $\Omega \coloneqq [-\frac{c'}{c} \eps,\frac{c'}{c}\eps]^d \times \Sigma_{\leq k}$.  Note that the right-hand side is well defined for all $f$ in $\ell^p(\Z^d;H)$ (with no restriction on the Fourier support on $f$).  Thus it will suffice to show that
\begin{equation}\label{equiv-2}
\| \Sample_{\Omega'}^{-1} \Op_{\varphi; \Sigma_{\leq k}} \|_{B(\ell^p(\Z^d;H) \to L^p(\A_\Z^d;H))} \lessapprox 1
\end{equation}
for all $(2r)' \leq p \leq 2r$.  Now that the Fourier restriction has been removed, interpolation becomes available, and it suffices to establish this bound for $p=2r, (2r)'$.  For $p=2r$ the claim follows from the $p=2r$ case of Theorem \ref{main-sampling} already established (with $c$ replaced by $c'$), noting from Theorem \ref{main} that
\begin{equation}\label{opo}
\|\Op_{\varphi; \Sigma_{\leq k}} \|_{B(\ell^p(\Z^d;H))} \lessapprox 1
\end{equation}
for $p=2r$ (indeed, this estimate holds for all $(2r)' \leq p \leq 2r$ by duality and interpolation).

For $p=(2r)'$, we apply duality to write the estimate in the equivalent form
\begin{equation}\label{equiv}
\| \Op_{\varphi; \Sigma_{\leq k}} \Sample \|_{B(L^{2r}(\A_\Z^d;H) \to \ell^{2r}(\Z^d;H))} \lessapprox 1.
\end{equation}
From \eqref{sam} we have
$$ \Op_{\varphi; \Sigma_{\leq k}} \Sample  = \Sample \Op_{\varphi \otimes 1_{\Sigma_{\leq k}}}.$$
From Theorem \ref{ortho-apply-arith} one has
$$ \|\Op_{\varphi \otimes 1_{\Sigma_{\leq k}}}\|_{B(L^{2r}(\A_\Z^d;H))}
= \|\Op_{\varphi}\|_{B(L^{2r}(\R^d;H))}
\|\Op_{1_{\Sigma_{\leq k}}}\|_{B(L^{2r}(\hat \Z^d;H))} \lessapprox 1$$
and the claim now follows from the $p=2r$ case of Theorem \ref{main-sampling} already established (with $c$ replaced by $c'$).

Now we obtain the lower bound 
$$
\| \Sample_\Omega^{-1} f \|_{L^p(\A_\Z^d;H)} \gtrapprox \|f\|_{\ell^p(\Z^d;H)}$$
for $f \in \ell^p(\Z^d;H)^{\pi(\Omega)}$.  This is equivalent to
$$
\| \Sample F \|_{\ell^p(\Z^d;H)} \lessapprox \|F\|_{L^p(\A_\Z^d;H)}$$
for $F \in L^p(\Z^d;H)^{\Omega}$.  For such $F$ we have $\Sample F = \Sample \Op_{\varphi \otimes 1_{\Sigma_{\leq k}}} F$, so it suffices to show that
$$
\| \Sample \Op_{\varphi \otimes 1_{\Sigma_{\leq k}}} \|_{B(L^p(\A_\Z^d;H) \to \ell^p(\Z^d;H))} \lessapprox 1$$
for all $(2r)' \leq p \leq 2r$.  By interpolation it suffices to establish this bound for $p=2r, (2r)'$.  For $p=2r$ the claim follows from \eqref{equiv}.  For $p=(2r)'$ we dualize to
$$ \| \Op_{\varphi \otimes 1_{\Sigma_{\leq k}}} \Sample_{\Omega'}^{-1} \|_{B(\ell^{2r}(\Z^d;H) \to L^{2r}(\A_\Z^d;H))} \lessapprox 1$$
and the claim now follows from \eqref{equiv-2}, \eqref{sam}.  This concludes the proof of Theorem \ref{main-sampling} for general $p$.

Now we can prove Theorem \ref{main-adelic}.  Let the notation and hypotheses be as in that theorem, and as before let $\varphi$ be the function \eqref{varph}.  Then by \eqref{sam} (and Lemma \ref{ortho}(i)) we can factorize
$$ 
\Op_{m;\Sigma_{\leq k}} = \Op_{m;\Sigma_{\leq k}}  \Op_{\varphi;\Sigma_{\leq k}}  =\Sample \Op_m \Sample_{\Omega'}^{-1} \Op_{\varphi; \Sigma_{\leq k}}.$$
The claim now follows from \eqref{opo} and Theorem \ref{main-sampling}.

\section{The Ionescu--Wainger major arc construction}\label{num-sec}

We now describe the specific choice of major arcs that essentially appears in the original work \cite{IW} of Ionescu and Wainger, as well as in many subsequent works.

\begin{lemma}\label{iwac}  Let $0 < \rho < 1$ be a parameter, and set $k \coloneqq \lfloor \frac{2}{\rho} \rfloor + 1$.  Suppose that $N \geq 2^k$.  Then there exists a set $S$ of pairwise coprime natural numbers such that for any $d \in \Z_+$, and $\eps>0$, the major arc parameter set $(d,k,S,\eps)$ obeys the following properties:
\begin{itemize}
\item[(i)]  One has $\T^d[q] \subseteq \Sigma_{\leq k}$ for all natural numbers $1 \leq q \leq N$.
\item[(ii)]  One has $\Sigma_{\leq k} \subseteq \T^d[Q]$ for some $Q \leq 3^N$.
\item[(iii)]  All elements of $S$ are bounded by $C^{k N^{\rho/2}}$ for some absolute constant $C>1$. In particular, $(d,k,S,\eps)$ will be $(r,\frac{1}{2})$-good whenever
\begin{equation}\label{epso}
 \eps < \frac{1}{4r C^{2rk^2 N^{\rho/2}}}.
\end{equation}
\item[(iv)]  $\Sigma_{\leq k}$ is the union of finitely many subgroups of $\T^d$, each of the form $\T^d[q]$ for some $q \leq O(1)^{k^2 N^{\rho/2}}$.  In particular, $|\Sigma_{\leq k}| \leq O(1)^{dk^2 N^{\rho/2}}$.
\end{itemize}
\end{lemma}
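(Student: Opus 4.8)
The plan is to take $S$ to be an explicit partition of the primes up to $N$ into consecutive blocks of a carefully calibrated size. For each prime $p$ set $e_p \coloneqq \lfloor \log N/\log p \rfloor$, so that $p^{e_p}$ is the largest power of $p$ that is at most $N$; note that if $q \le N$ then $v_p(q) \le e_p$ for every prime $p$. Let $T_0 \coloneqq \lceil C_0 k N^{\rho/2}/\log N \rceil$, where $C_0$ is a fixed absolute constant at least as large as the Chebyshev constant in the bound $\#\{p \text{ prime}: p \le x\} \le C_0 x/\log x$. Since $k = \lfloor 2/\rho\rfloor + 1$ forces $2/\rho < k$, an elementary estimate gives $\log N < k N^{\rho/2}$, so $T_0 \ge 1$; and the Chebyshev bound applied at $x = N^{\rho/2}$ (which exceeds $2$ because $N \ge 2^k$ and $2/\rho < k$) gives $\#\{p \le N^{\rho/2}\} \le C_0 N^{\rho/2}/((\rho/2)\log N) < C_0 k N^{\rho/2}/\log N \le T_0$. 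Enumerating the primes $p_1 < p_2 < \cdots$ up to $N$ and cutting them into consecutive blocks $B_1 = \{p_1,\dots,p_{T_0}\}$, $B_2 = \{p_{T_0+1},\dots,p_{2T_0}\}$, $\dots$, I set $S \coloneqq \{ s_B : B \}$ with $s_B \coloneqq \prod_{p \in B} p^{e_p}$; pairwise coprimality is immediate, as distinct blocks use disjoint primes.

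The heart of the argument is (i). Fix $1 \le q \le N$ and write $q = \prod_{p \mid q} p^{v_p}$; for each $p \mid q$ we have $v_p \le e_p$, hence $p^{v_p} \mid p^{e_p} \mid s_{B(p)}$, where $B(p)$ denotes the block containing $p$. Consequently $q \mid Q_A$ with $A \coloneqq \{ s_{B(p)} : p \mid q\}$, and it remains to bound $|A|$. I split the prime divisors of $q$ into those $\le N^{\rho/2}$ and those $> N^{\rho/2}$. By the choice $T_0 \ge \#\{p \le N^{\rho/2}\}$, all of the former lie in $B_1$, contributing at most one element to $A$. For the latter, if $q$ has $m$ distinct prime divisors exceeding $N^{\rho/2}$, then $N^{m\rho/2} < \prod_p p \le q \le N$, forcing $m < 2/\rho$ and hence $m \le \lfloor 2/\rho\rfloor = k-1$; these contribute at most $k-1$ further elements. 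Thus $|A| \le k$ and $\T^d[q] \subseteq \T^d[Q_A] = \Sigma_{\subseteq A} \subseteq \Sigma_{\le k}$, which is (i).

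The remaining items are bookkeeping. For (ii), the blocks partition all primes $\le N$, so $Q_S = \prod_{p \le N} p^{e_p} = \operatorname{lcm}(1,2,\dots,N)$, whence $\Sigma_{\le k} \subseteq \T^d[Q_S]$; the standard bound $\operatorname{lcm}(1,\dots,N) \le 3^N$ (a consequence of Chebyshev's estimates, since $\log\operatorname{lcm}(1,\dots,N)=\psi(N)\le N\log 3$) then gives $Q_S \le 3^N$. For (iii), each $p^{e_p} \le N$, so $s_B \le N^{|B|} \le N^{T_0} = \exp(T_0 \log N) \le \exp((C_0+1)k N^{\rho/2})$ (using $T_0 \le C_0 k N^{\rho/2}/\log N + 1$ and $kN^{\rho/2}/\log N \ge 1$), so every element of $S$ is at most $C^{kN^{\rho/2}}$ with $C \coloneqq e^{C_0+1}$; taking $q_{\max} \coloneqq C^{kN^{\rho/2}}$ in \eqref{small} with $c = \tfrac12$ yields exactly the sufficient condition $\eps < \frac{1}{4r C^{2rk^2 N^{\rho/2}}}$. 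For (iv), each $Q_A$ with $A \in \binom{S}{\le k}$ satisfies $Q_A \le (C^{kN^{\rho/2}})^k = C^{k^2 N^{\rho/2}}$, and $\Sigma_{\le k}$ is literally the union of the subgroups $\T^d[Q_A]$; since $|S|$ is at most the number of primes $\le N$, hence $\le N$, one has $|\binom{S}{\le k}| \le (k+1)N^k$, and then $|\Sigma_{\le k}| \le \sum_{A} Q_A^d \le (k+1)N^k \cdot C^{dk^2 N^{\rho/2}} = O(1)^{dk^2 N^{\rho/2}}$, the polynomial factor being absorbed via $\log N < kN^{\rho/2}$.

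The only genuinely delicate point is choosing the block length $T_0$ so that it serves (i) and (iii) at once: property (i) needs $T_0$ large enough that the first block absorbs every prime up to $N^{\rho/2}$, while property (iii) needs $T_0$ small enough that $N^{T_0}$ stays below $C^{kN^{\rho/2}}$. These are simultaneously satisfiable precisely because the number of primes up to $N^{\rho/2}$ is of order $N^{\rho/2}/\log(N^{\rho/2}) = (2/\rho)N^{\rho/2}/\log N$, which the hypothesis $k = \lfloor 2/\rho\rfloor + 1$ (equivalently $2/\rho < k$) keeps comfortably below $kN^{\rho/2}/\log N$; once $T_0$ is pinned down, everything else reduces to the elementary facts $v_p(q) \le e_p$ for $q \le N$ and standard Chebyshev-type prime bounds. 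I do not anticipate any hidden difficulty with the dependence on $d$, since $S$ is constructed independently of $d$ and $\eps$.
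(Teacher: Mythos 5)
Your proof is correct and rests on the same key arithmetic observation as the paper's (a $q \le N$ has at most $k-1$ prime factors exceeding $N^{\rho/2}$, so the prime factors $\le N^{\rho/2}$ need to be "merged" into a bounded number of $S$-elements), but your construction of $S$ is genuinely different. The paper takes $S$ to consist of one "big" element $\prod_{p \le N^{\rho/2}} p^{\lfloor \log N/\log p\rfloor}$ together with the singletons $p^{\lfloor\log N/\log p\rfloor}$ for each prime $N^{\rho/2} < p \le N$ (so $|S| \sim N/\log N$), whereas you partition all primes $\le N$ into consecutive blocks of uniform length $T_0 \sim kN^{\rho/2}/\log N$, yielding a much smaller $|S| \sim N^{1-\rho/2}/k$; your choice is closer in spirit to the original Ionescu--Wainger construction, while the paper's is a simplification that avoids having to calibrate a block length (the only tuning being the threshold $N^{\rho/2}$). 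The bookkeeping for (ii)--(iv) is essentially identical in both: $Q_S = \operatorname{lcm}(1,\dots,N) \le 3^N$, every $S$-element is $\le C^{kN^{\rho/2}}$ (since a block of size $T_0$ contributes at most $N^{T_0}$, matching the paper's bound $N^{O(N^{\rho/2}/\log N^{\rho/2})}$), and the rest follows. Since the paper's estimates elsewhere are uniform in $|S|$, the smaller $|S|$ of your construction confers no particular advantage, but the proof is sound.
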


Typically $\rho$ (and hence $k$) and $r$ will be fixed in applications.  For $N$ sufficiently large depending on $\rho,r,d$, the condition \eqref{epso} can be simplified to $\eps \leq \exp(-N^{\rho})$, and the bounds $q \leq O(1)^{k^2 N^{\rho/2}}$, $|\Sigma_{\leq k}| \leq O(1)^{dk^2 N^{\rho/2}}$ in (iv) can be similarly simplified to $q, |\Sigma_{\leq k}| \leq \exp(N^\rho)$.  The main point here is we can cover the Farey sequence $\bigcup_{1 \leq q \leq N} \T^d[q]$ by good major arcs whose width $\eps$ can be as large as $\exp( - N^\rho )$.

\begin{proof}  We set $S$ equal to
$$ S \coloneqq \{ \prod_{p \leq N^{\rho/2}} p^{\lfloor \frac{\log N}{\log p}\rfloor} \} \cup \{ p^{\lfloor \frac{\log N}{\log p}\rfloor}: N^{\rho/2} < p \leq N \}$$
where $p$ is always understood to be restricted to the primes.  Clearly the elements of $S$ are pairwise coprime.  To prove (i), we have to show that every natural number $1 \leq q \leq N$ is a factor of a product of at most $k$ distinct elements from $S$. But by the fundamental theorem of arithmetic we can write $q = p_1^{a_1} \dots p_m^{a_m}$ for some primes $1 < p_1 < \dots < p_m \leq N$ and $1 \leq a_i \leq \lfloor \frac{\log N}{\log p_i} \rfloor$.  At most $\lfloor \frac{2}{\rho} \rfloor = k-1$ of these primes can exceed $N^{\rho/2}$.  One can then write $q$ as a factor of $
\prod_{p \leq N^{\rho/2}} p^{\lfloor \frac{\log N}{\log p}\rfloor}$ times at most $k-1$ terms of the form $p^{\lfloor \frac{\log N}{\log p}\rfloor}$, giving the claim.

The product $Q_S$ of all the elements of $S$ is equal to
$$ \prod_{p \leq N} p^{\lfloor \frac{\log N}{\log p}\rfloor} = \mathrm{lcm}(1,\dots,N) \leq 3^N$$
where the latter inequality is established in \cite{hanson}.  Since $\Sigma_{\leq k} \subseteq \T^d[Q_S]$, this gives (ii).

For (iii), we trivially bound $p^{\lfloor \frac{\log N}{\log p}\rfloor}$ by $N$, and note from the prime number theorem that the number of primes less than $N^{\rho/2}$ is $O(N^{\rho/2} / \log N^{\rho/2} ) = O( k N^{\rho/2} / \log N )$, giving (iii) as claimed (noting from the hypothesis $N \geq 2^k$ that $N^{\rho/2} \geq 2^{1/2}$ and hence $N \leq N^{O(N^{\rho/2} / \log N^{\rho/2} )} = O(1)^{kN^{\rho/2}}$).

Finally to prove (iv), note from definition that $\Sigma_{\leq k}$ is the union of  $\T^d[q]$  where $q$ is the product of at most $k$ elements of $S$, and the claim now follows from (iii).
\end{proof}

As a particular corollary of this construction, we can prove a sampling theorem for the classical major arcs.

\begin{corollary}[Classical major arc sampling]\label{main-sampling-class}  Let $d, N \in \Z_+$, $\eps>0$, and set
$$\Omega \coloneqq [-\eps,\eps]^d \times \bigcup_{q=1}^N \T^d[Q].$$
Let $0 < \rho < 1$ and $1 < p < \infty$ be such that
\begin{equation}\label{eps-small}
\eps < \exp( - C \max(p,p') \rho^{-2} N^{\rho/2} )
\end{equation}
for a sufficiently large absolute constant $C$.  Then for any finite-dimensional Hilbert space $H$, one has
$$ \| \Sample_\Omega^{-1} f \|_{L^p(\A_\Z^d;H)} = \exp( O(d + \rho^{-1} \Log(\max(p,p') \Log \rho^{-1}) ) ) \| f \|_{\ell^p(\Z^d;H)} $$
for all $f \in \ell^p(\Z^d;H)^{\pi(\Omega)}$.
\end{corollary}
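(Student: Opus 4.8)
The plan is to combine the Ionescu--Wainger major arc construction (Lemma \ref{iwac}) with the major arc sampling theorem (Theorem \ref{main-sampling}): Lemma \ref{iwac} realizes the classical (Farey) major arcs $\bigcup_{q=1}^N \T^d[q]$ inside a \emph{good} Ionescu--Wainger major arc set $\Sigma_{\leq k}$ at the mild cost of shrinking the admissible width to roughly $\exp(-N^{\rho/2})$, and Theorem \ref{main-sampling} then supplies the $\ell^p$--$L^p$ equivalence for the larger set, which by a restriction argument also gives it for $\Omega$. Concretely, I would fix $k \coloneqq \lfloor \frac{2}{\rho}\rfloor + 1$ and $r \coloneqq \lceil \frac{1}{2}\max(p,p')\rceil$, so that $(2r)' \leq p \leq 2r$, $k \sim \rho^{-1}$, and $r \sim \max(p,p')$. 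I may also assume $N \geq 2^k$ after replacing $N$ by $\max(N,2^k)$ if necessary: this only enlarges $\Omega$, and since $2^{k\rho/2} = O(1)$ it affects neither the form of the hypothesis \eqref{eps-small} nor that of the conclusion. With $N \geq 2^k$, Lemma \ref{iwac} produces a set $S$ of pairwise coprime integers with the stated properties.

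Next I would verify that $(d,k,S,\eps)$ is $(r,\tfrac12)$-good. By Lemma \ref{iwac}(iii) it suffices to have $\eps$ below the threshold \eqref{epso}; writing that threshold as $e^{-Y}$ and using $r = O(\max(p,p'))$ and $k = O(\rho^{-1})$ shows $Y = O(\max(p,p')\rho^{-2}N^{\rho/2})$ (the $\log(4r)$ contribution being harmlessly absorbed, since $\rho<1$ and $N \geq 1$), so the hypothesis \eqref{eps-small} implies \eqref{epso} once the absolute constant $C$ there is chosen large enough. By Lemma \ref{iwac}(i) we then have $\bigcup_{q=1}^N \T^d[q] \subseteq \Sigma_{\leq k}$, hence $\Omega \subseteq \Omega' \coloneqq [-\eps,\eps]^d \times \Sigma_{\leq k}$; and by Lemma \ref{ortho}(i) (with $c = \tfrac12$) the elements of $\Sigma_{\leq k}$ are $4\eps$-separated, so both $\Omega$ and $\Omega'$ are non-aliasing.

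It then remains to transfer from $\Omega'$ to $\Omega$. For $f \in \ell^2(\Z^d;H)^{\pi(\Omega)} \subseteq \ell^2(\Z^d;H)^{\pi(\Omega')}$, the function $\Sample_\Omega^{-1} f$ lies in $L^2(\A_\Z^d;H)^{\Omega} \subseteq L^2(\A_\Z^d;H)^{\Omega'}$ and satisfies $\Sample \Sample_\Omega^{-1} f = f$, so by injectivity of $\Sample$ on $L^2(\A_\Z^d;H)^{\Omega'}$ it coincides with $\Sample_{\Omega'}^{-1} f$; extending by density, $\Sample_\Omega^{-1} f = \Sample_{\Omega'}^{-1} f$ for all $f \in \ell^p(\Z^d;H)^{\pi(\Omega)}$, whence $\|\Sample_\Omega^{-1}f\|_{L^p(\A_\Z^d;H)} = \|\Sample_{\Omega'}^{-1}f\|_{L^p(\A_\Z^d;H)}$. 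Applying Theorem \ref{main-sampling} to the good parameter set $(d,k,S,\eps)$ and the exponent $p \in [(2r)',2r]$ bounds this norm by $\exp(O(d) + O(k\Log(r\Log k)))\|f\|_{\ell^p(\Z^d;H)}$ (two-sidedly); finally, substituting $k \sim \rho^{-1}$ and $r \sim \max(p,p')$ gives $\Log k = O(\Log\rho^{-1})$, $r\Log k = O(\max(p,p')\Log\rho^{-1})$, and $k\Log(r\Log k) = O(\rho^{-1}\Log(\max(p,p')\Log\rho^{-1}))$, which is exactly the claimed exponent.

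I do not anticipate a serious obstacle: the real content is already carried by Lemmas \ref{iwac} and \ref{ortho} and by Theorem \ref{main-sampling}, and the argument reduces to parameter matching together with the routine identification $\Sample_\Omega^{-1} = \Sample_{\Omega'}^{-1}$ on the smaller Fourier-support space. The one place requiring a little care is the degenerate regime $N < 2^k$ (i.e.\ $\rho$ not small relative to $\log N$), where one must check that enlarging $N$ to $2^k$ still leaves \eqref{epso} implied by the (now essentially $N$-free) hypothesis \eqref{eps-small}; this holds precisely because $2^{k\rho/2} = O(1)$, so the comparison of exponents degenerates to a bounded one.
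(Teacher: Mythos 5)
Your proof is correct and takes essentially the same route as the paper's one-line proof: choose $k \sim \rho^{-1}$ and $r \sim \max(p,p')$, realize the Farey arcs inside the Ionescu--Wainger set $\Sigma_{\leq k}$ via Lemma \ref{iwac}, verify $(r,\tfrac12)$-goodness from \eqref{eps-small}, and apply Theorem \ref{main-sampling}. You also correctly supply two details the paper leaves implicit, namely the reduction to $N \geq 2^k$ required by Lemma \ref{iwac} (harmless since $2^{k\rho/2}=O(1)$) and the identification $\Sample_\Omega^{-1} = \Sample_{\Omega'}^{-1}$ on $\ell^p(\Z^d;H)^{\pi(\Omega)}$ used to transfer the estimate from $\Omega' = [-\eps,\eps]^d \times \Sigma_{\leq k}$ down to the smaller set $\Omega$.
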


\begin{proof}  Set $r$ to be the first natural number such that $(2r)' \leq p \leq 2r$, then $r \sim \max(p,p')$.  Let $k \coloneqq \lfloor \frac{2}{\rho}\rfloor + 1 \sim \rho^{-1}$, and let $S$ be the set constructed by Lemma \ref{iwac}, then $\Omega \subseteq [-\eps,\eps]^d \times \Sigma_{\leq k}$.  If the constant $C$ in \eqref{eps-small} is large enough, Lemma \ref{iwac}(iii) ensures that $(d,k,S,\eps)$ is $(r,\frac{1}{2})$-good, and the claim now follows from Theorem \ref{main-sampling}.
\end{proof}

\appendix

\section{Abstract harmonic analysis}\label{adele-sec}

We define the Pontryagin dual pairs $(\G,\G^*)$ of LCA groups used in this paper.

\begin{itemize}
    \item [(i)]  If $\G = \R$ with Lebesgue measure $\mu_\R = dx$, then $\G^* = \R^* = \R$ with Lebesgue measure $\mu_{\R^*} = d\xi$ is a Pontryagin dual, with pairing $x \cdot \xi \coloneqq x \xi \mod 1$.
    \item[(ii)]  If $\G = \Z$ with counting measure $\mu_\Z$, then $\G^* = \T$ with Lebesgue measure $\mu_{\T} = d\xi$ is a Pontryagin dual, with pairing $x \cdot \xi \coloneqq x \xi$.
    \item[(iii)]  If $\G = \Z/Q\Z$ is a cyclic group for some $Q \in \Z_+$ with normalized counting measure $\int_{\Z/Q\Z} f(x)\ d\mu_{\Z/Q\Z}(x) \coloneqq \E_{x \in \Z/Q\Z} f(x)$, then the \emph{dual cyclic group} $\G^* = \T[Q] = \frac{1}{Q} \Z/\Z$ with counting measure $\mu_{\T[Q]}$ is a Pontryagin dual, with pairing $x \cdot \xi \coloneqq x \xi$.
    \item[(iv)]  If $\G = \hat \Z \coloneqq \varprojlim \Z/Q\Z$ is the compact group of profinite integers with Haar probability measure (using the projection maps from $\Z/Q\Z$ to $\Z/q\Z$ whenever $q$ divides $Q$), then the discrete group $\G^* = \hat \Z^* =  \Q/\Z$ of ``arithmetic frequencies'' with counting measure $\mu_{\Q/\Z}$ is a Pontragin dual, with pairing $x \cdot (\frac{a}{q} \mod 1) \coloneqq \frac{xa \mod q}{q}$.
    \item[(v)]  If $\G_1,\G_2$ are LCA groups with Pontryagin duals $\G_1^*, \G_2^*$, then the product $\G_1 \times \G_2$ (with product Haar measure) is an LCA group with Pontryagin dual $\G_1^* \times \G_2^*$ and pairing $(x_1,x_2) \cdot(\xi_1,\xi_2) \coloneqq x_1 \cdot \xi_1 + x_2 \cdot \xi_2$.  In particular, if $\G = \A_\Z^d= \R^d \times \hat \Z^d$ is the $d^{\mathrm{th}}$ power of the adelic integers\footnote{The adelic integers $\A_\Z$ should not be confused with the larger ring $\A_\Q = \A_\Z \otimes_\Z \Q$ of adelic numbers, which we will not use in this paper.} $\A_\Z \coloneqq \R \times \hat \Z$ (with the product Haar measure $\mu_{\A_\Z^d} \coloneqq \mu^d_\R \times \mu^d_{\hat \Z}$), then \emph{adelic frequency space} $\G^* = (\A_\Z^d)^* = \R^d \times (\Q/\Z)^d$ is a Pontryagin dual (with product measure $\mu_{\R \times \Q/\Z} \coloneqq \mu_\R^d \times \mu_{\Q/\Z}^d$ and the indicated pairing.  
\end{itemize}

More explicitly: an element of $\A_\Z^d$ is of the form $(x,y)$, where $x = (x_1,\dots,x_d) \in \R^d$ and $y = (y_1,\dots,y_d) \in \hat \Z^d$, thus $y_d \mod Q$ is an element of $\Z/Q\Z$ for any positive integer $Q$ (with the compatibility conditions $y_d \mod q = (y_d \mod Q) \mod q$ whenever $q$ divides $Q$), and if $(\xi, \eta) = (\xi_1,\dots,\xi_d, \frac{a_1}{q} \mod 1, \dots, \frac{a_d}{q} \mod 1)$ is an element of the dual group $\R^d \times (\Q/\Z)^d$, then
\begin{align*}
(x,y) \cdot (\xi, \eta) &= x \cdot \xi + y \cdot \eta \\
&= x_1 \xi_1 + \dots + x_d \xi_d + \frac{a_1 y_1 \mod q + \dots + a_d y_d \mod q}{q}.
\end{align*}
We have the canonical inclusion $\iota \colon \Z^d \to \A_\Z^d$ defined by
$$ \iota(n) \coloneqq (n, (n \mod Q)_{Q \in \Z_+})$$
and the projection map $\pi \colon \R^d \times (\Q/\Z)^d \to \T^d$ defined by
$$ \pi(\theta,\alpha) \coloneqq \alpha + \theta;$$
the two maps enjoy the Fourier adjoint relationship
$$ n \cdot \pi(\theta,\alpha) = \iota(n) \cdot (\theta,\alpha)$$
for all $n \in \Z^d$ and $(\theta,\alpha) \in \R^d \times (\Q/\Z)^d$.

We define the following Schwartz-Bruhat spaces $\Schwartz(\G)$ on various LCA groups\footnote{For a definition of Schwartz-Bruhat spaces on arbitrary LCA groups, see \cite{bruhat}, \cite{osb}.} $\G$:

\begin{itemize}
    \item[(i)]  $\Schwartz(\R^d)$ is the space of Schwartz functions on $\R^d$.
    \item[(ii)]  $\Schwartz(\Z^d)$ is the space of rapidly decreasing functions on $\Z^d$, and $\Schwartz(\T^d)$ is the space of smooth functions on $\T$.
    \item[(iii)]  $\Schwartz(\hat \Z^d)$ is the space of locally constant functions $f$ on $\hat \Z^d$, or equivalently those functions of the form $f(x) = f_Q(x \mod Q)$ for some $Q \in \Z_+$ and $f_Q \colon (\Z/Q\Z)^d \to \C$.  $\Schwartz((\Q/\Z)^d)$ is the space of finitely supported functions on $(\Q/\Z)^d$.
    \item[(iv)]  $\Schwartz(\A^d_\Z)$ is the space of functions of the form $f(x,y) = f_Q(x,y \mod Q)$ for some $Q \in \Z_+$ and $f_Q \colon \R^d \times (\Z/Q\Z)^d$ that is Schwartz in the $\R$ variable.  $\Schwartz(\R^d \times (\Q/\Z)^d)$ is the space of functions supported on $\R^d \times \Sigma$ for some finite set $\Sigma \subset (\Q/\Z)^d$ and Schwartz in the $\R^d$ variable.
\end{itemize}

If $\G = \R^d, \Z^d, \A^d_\Z$, one can verify that the Fourier transform ${\mathcal F}_\G$ is a linear isomorphism between $\Schwartz(\G)$ and $\Schwartz(\G^*)$.

See \cite[\S 4]{KMT} for a further development of abstract harmonic analysis and a discussion of the relationship between the integers $\Z$, the adelic integers $\A_\Z$, and other related LCA groups.

\end{document}